%% file: main.tex
\documentclass[oneside]{amsart}

\usepackage{graphicx}
\usepackage{amsmath, amssymb, amsthm, upgreek}
\usepackage{mathtools}
\usepackage{enumitem}
\usepackage{stmaryrd}
\usepackage{float}
\usepackage{caption}
\usepackage{subcaption}
\usepackage{import}
\usepackage[most]{tcolorbox}
\usepackage{etoolbox}
\usepackage{csquotes}
\usepackage{hyperref}

\theoremstyle{plain}
\newtheorem{theorem}{Theorem}
\newtheorem{proposition}[theorem]{Proposition}
\newtheorem{lemma}[theorem]{Lemma}

\theoremstyle{remark}
\newtheorem{remark}[theorem]{Remark}
\newtheorem*{remark*}{Remark}
\theoremstyle{definition}
\newtheorem{definition}[theorem]{Definition}
\theoremstyle{definition}

\numberwithin{equation}{section}
\numberwithin{theorem}{section}

\DeclareMathOperator{\arcsinh}{arcsinh} 
\newcommand*{\R}{\mathbb{R}} 
\renewcommand*{\d}{\mathrm{d}} 
\newcommand*{\dt}{\frac{\mathrm{d}}{\mathrm{d}t}} 
\newcommand{\mr}[1]{\mathrm{#1}}
\newcommand{\ols}[1]{\mskip.5\thinmuskip\overline{\mskip-.1\thinmuskip {#1} \mskip-.4\thinmuskip}\mskip.5\thinmuskip}

\title{Delaunay surfaces with free boundary on the unit $2$-sphere}

\author[E.~Lang]{Eric Lang}
\email{eric.lang@uni-bonn.de}

\author[C.~Scharrer]{Christian Scharrer}
\email{scharrer@iam.uni-bonn.de}

\address{Institute for Applied Mathematics, University of Bonn, Endenicher Allee 60, 53115 Bonn, Germany}

\begin{document}

\maketitle

\begin{abstract}
	We characterize all symmetric, embedded surfaces of revolution with constant mean curvature that meet the unit sphere orthogonally. It is well known that there exists a unique free boundary catenoid inside of the unit ball. By analogy, we prove the existence of a unique free boundary nodoid inside of the unit ball with constant mean curvature equal to $-1$. Moreover, there exists a unique compact free boundary nodoid \emph{outside} of the unit ball with constant mean curvature equal to $-1$. However, there exists no symmetric surface of revolution with constant mean curvature $1$ that meets the unit sphere orthogonally.  
\end{abstract}

\input{Contents/intro.tex}
\input{Contents/roulettes.tex}

\input{Contents/Delaunay.tex}

\input{Contents/intersections.tex}

\section*{Acknowledgments}
Parts of this article were submitted as the first author's bachelor thesis at the University of Bonn. The authors want to thank Stefan Müller for his interest in the work and for reviewing the thesis.

\bibliographystyle{abbrv}
\bibliography{mybib}
\end{document}

%% file: Contents/intro.tex
\section{Introduction}
\subsection{Free boundary minimal surfaces}\label{sec:intro:fb_minimal}
Let $(M^2,g_0)$ be a connected, compact $2$-dimensional Riemannian manifold with $\partial M\neq\varnothing$. In 1902, \textsc{Steklov}~\cite{Steklov1902} studied the eigenvalue problem for the Laplace--Beltrami operator $\Delta_{g_0}$ of finding a pair $u\in C^\infty(M)$ and $\sigma\in\R$ such that
\begin{equation*}
	\begin{cases*}
		\Delta_{g_0}u=0&\text{in $M$}\\
		\frac{\partial u}{\partial\eta}=\sigma u&\text{on $\partial M$}
	\end{cases*}
\end{equation*}
where $\eta=\eta(g_0)$ is the outer unit normal to $\partial M$. This problem has a discrete set 
\begin{equation*}
	0=\sigma_0<\sigma_1(g_0)\le\sigma_2(g_0)\le\ldots
\end{equation*}
of eigenvalues. We denote the family of smooth bilinear $2$-forms on $M$ by $S^2(M)$ so that the set of Riemannian metrics is given by $\{g\in S^2(M)\colon g>0\}$. Suppose $g_0$ maximizes the \emph{first Steklov eigenvalue}. That is, $g_0$ maximizes the scale-invariant product
\begin{equation*}
	\sigma_1(g_0)\,\ell_{g_0}(\partial M)=\sup_{\substack{g\in S^2(M)\\ g>0}}\sigma_1(g)\,\ell_{g}(\partial M),
\end{equation*}
where $\ell_g(\partial M)$ denotes the length of $(\partial M,g)$. By a result of \textsc{Fraser--Schoen}~\cite{FraserSchoen16}, after rescaling $g_0$ to $\sigma_1(g_0)=1$, there exists an implicit number $n\ge2$ and independent first Steklov eigenfunctions $u_1,\ldots,u_n$ such that
\begin{equation*}
	u\vcentcolon=(u_1,\ldots,u_n)\colon M\to \ols{\mathbb B}^n
\end{equation*} 
realizes a conformal, branched isometric minimal immersion of $M$ into the unit ball $\ols{\mathbb B}^n=\{x\in\R^n\colon |x|\le 1\}$ such that $\Sigma=u(M)$ meets $\mathbb S^{n-1}=\partial \ols{\mathbb B}^n$ orthogonally. This motivates the study of minimal surfaces in $\ols{\mathbb B}^3$ that meet $\partial \ols{\mathbb B}^3$ orthogonally. A surface meeting the boundary of its container manifold orthogonally is commonly referred to as \emph{free boundary} surface. The most obvious such minimal surface in~$\ols{\mathbb B}^3$ is the unit disk. 
The only other rotationally symmetric, free boundary minimal surface in $\ols{\mathbb B}^3$ is a suitably scaled catenoid, see Section~\ref{sec:fb_catenoid} and Figure~\ref{fig:fb_catenoid}.
    
\subsection{Free boundary Delaunay surfaces}
Minimal surfaces are the critical points of the area functional. They are characterized by having vanishing scalar mean curvature $H=0$ where $H$ is the arithmetic mean of the principle curvatures. In fact, the unit normal multiplied by the sum of the principle curvatures is the $L^2$-gradient of the area functional. For closed surfaces, the $L^2$-gradient of the volume functional is given by the outer unit normal itself. Thus, surfaces of prescribed volume that are critical for the area functional are surfaces of constant scalar mean curvature  
\begin{equation}\label{eq:intro:H}
	H=\lambda
\end{equation}
where $\lambda\in\R$ plays the role of a Lagrange multiplier for the volume functional. The goal of this article is to study rotationally symmetric surfaces of constant mean curvature that meet the unit sphere orthogonally. Apart from generalizing the problem of finding free boundary minimal surfaces described in Section~\ref{sec:intro:fb_minimal}, this is motivated by a model of soap films. Indeed, one may consider a spherical glass bulb with a soap film attached. Just like in Plateau's problem, the soap film minimizes its surface area. It traps an unchangeable amount of air against the glass bulb while its boundary can move freely---an experimental example of a free boundary constant mean curvature surface.

\subsubsection{Rotationally symmetric surfaces of constant mean curvature}
Let $I$ be an interval and $\rho\in C^2(I)$. For the surface in $\R^3$ obtained by rotating the graph $x\mapsto(x,\rho(x))$ around the $x$-axis, the second-order partial differential equation \eqref{eq:intro:H} reads
\begin{equation*}
	\frac{1}{2}\left(\frac{-\ddot\rho}{(1+\dot\rho^2)^\frac{3}{2}}+\frac{1}{\rho(1+\dot\rho^2)^\frac{1}{2}}\right)=\lambda. 
\end{equation*}
This second-order differential equation can be integrated twice, leading to a $3$-parameter family of constant mean curvature surfaces where one parameter is given by $\lambda$ and one parameter corresponds to translation along the axis of revolution. Indeed, for $\lambda\neq0$, integrating once gives the separable equation
\begin{equation*}
	\rho^2-\frac{2\rho}{\lambda\sqrt{1+\dot\rho^2}}=c
\end{equation*}
where $c\in\R$ is the first integration parameter. \textsc{Delaunay}~\cite{Delaunay1841JMPA} observed that its solutions are precisely the roulettes of the conics, see Section \ref{sec:roulettes}. The corresponding surfaces of revolution are referred to as \emph{nodoids} for negative mean curvature and \emph{unduloids} for positive mean curvature. We will only be interested in symmetric free boundary \emph{Delaunay surfaces}. That is, $I$ contains the origin and, for all $x\in I$,
\begin{equation}\label{eq:intro:rho}
	\rho(x)=\rho(-x).
\end{equation} 
Under this condition, we no longer have the freedom of translation along the axis of rotation, leaving us with a $2$-parameter family where we only need to distinguish two cases; one where $\rho$ attains a maximum at the origin, and one where $\rho$ attains a minimum at the origin. 

\subsubsection{Free boundary nodoids outside of the unit ball}
We denote by $N^-(a,b)$ with $a,b>0$ the $2$-parameter family of profile curves whose corresponding surfaces of revolution have constant mean curvature $\lambda=-\frac{1}{2a}$ where the local graph representation~\eqref{eq:intro:rho} attains a maximum at $\rho(0)=a+\sqrt{a^2+b^2}$, see Equation~\eqref{eq:N} and Figure~\ref{fig:Nodary1}. Those nodoids with free boundary on the unit sphere can be characterized as the zeros of the function $G^-$ defined in Proposition~\ref{prp:nodary:intersection}. Given $a,t>0$, there exists a unique $b>0$ such that $N^-(a,b)$ restricted to $(-t,t)$ lies outside of the unit disk and meets the unit circle orthogonally at $t$ if and only if $G^-(a,t)=0$. There holds $\partial_aG^->0$ and $\partial_t^2G^->0$ (on the relevant subdomain), see Lemma~\ref{lem:nodoid:minus}. Thus, there exist precisely two branches of free boundary nodoids outside the unit ball parametrized by $a$ in corresponding intervals $I_1,I_2\subset(0,\infty)$, see Theorem~\ref{thm:nodoid:minus}. One branch with $I_1=(0,\beta]$ for some $\frac{1}{2}<\beta<1$ emanates from the north pole, see Figure~\ref{fig:fb_nodoid:branch_1}. The second branch with $I_2=(\frac{1}{2},\beta]$ emanates from the upper unit circle which itself represents a surface of constant mean curvature $1$, see Figure~\ref{fig:fb_nodoid:branch_1_and_2}. The two branches meet at $a=\beta$ (see Figure~\ref{fig:fb_nodoid:branch_1_and_2_border}) after which, for $a>\beta$, there do not exist any further free boundary nodoids. In particular, there exists a unique free boundary nodoid outside the unit ball with $H=-1$, see Figure~\ref{fig:fb_nodoid:branch_1_border_2}.

\begin{figure}[htbp]
	\centering
	\begin{subfigure}[b]{0.45\textwidth}
		\centering
		\includegraphics[width=\textwidth]{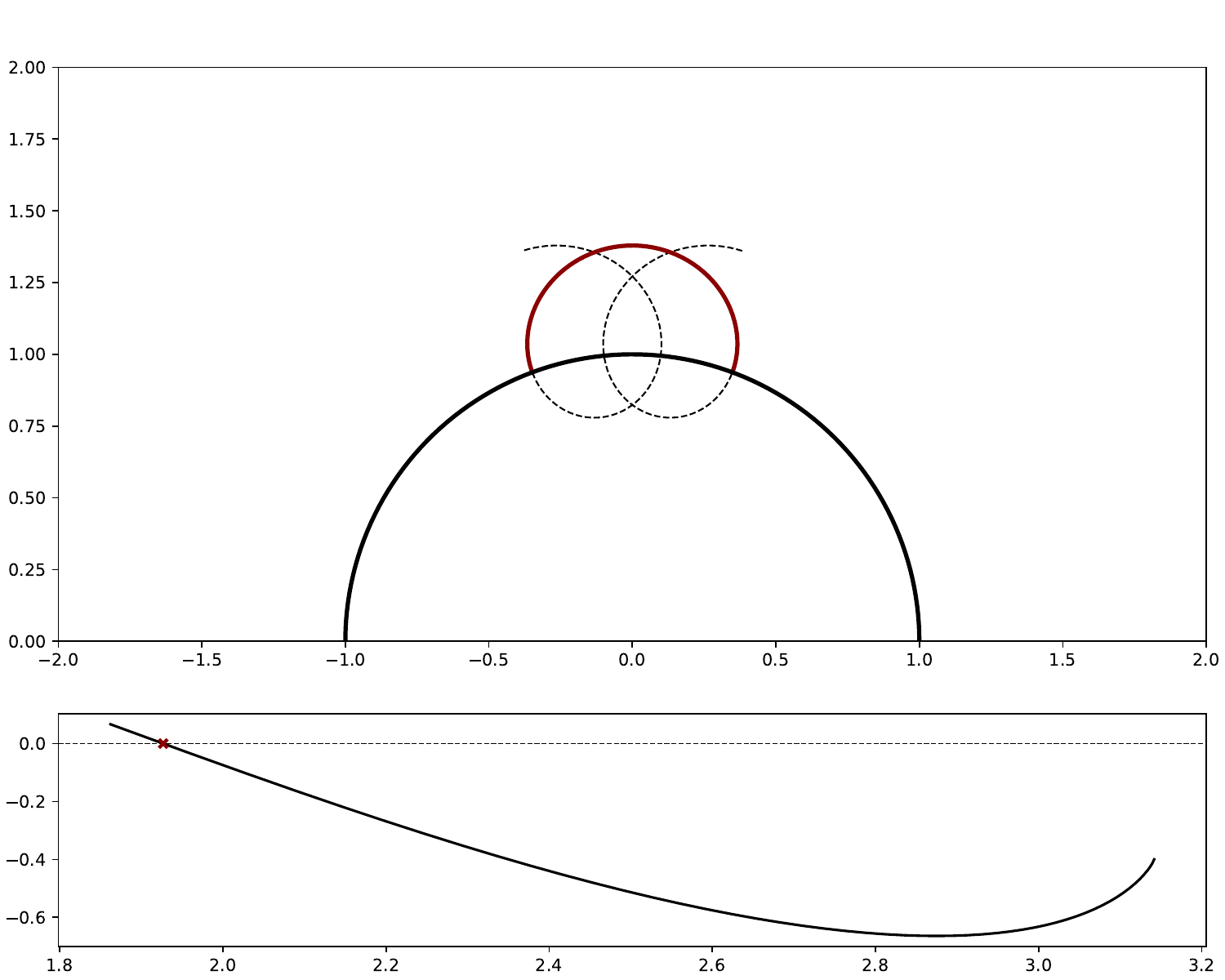}
		\caption{$N^-(a,b)$ with $a=0.3$, $b\approx1.0$}
		\label{fig:fb_nodoid:branch_1}
	\end{subfigure}
	\hfill 
	\begin{subfigure}[b]{0.45\textwidth}
		\centering
		\includegraphics[width=\textwidth]{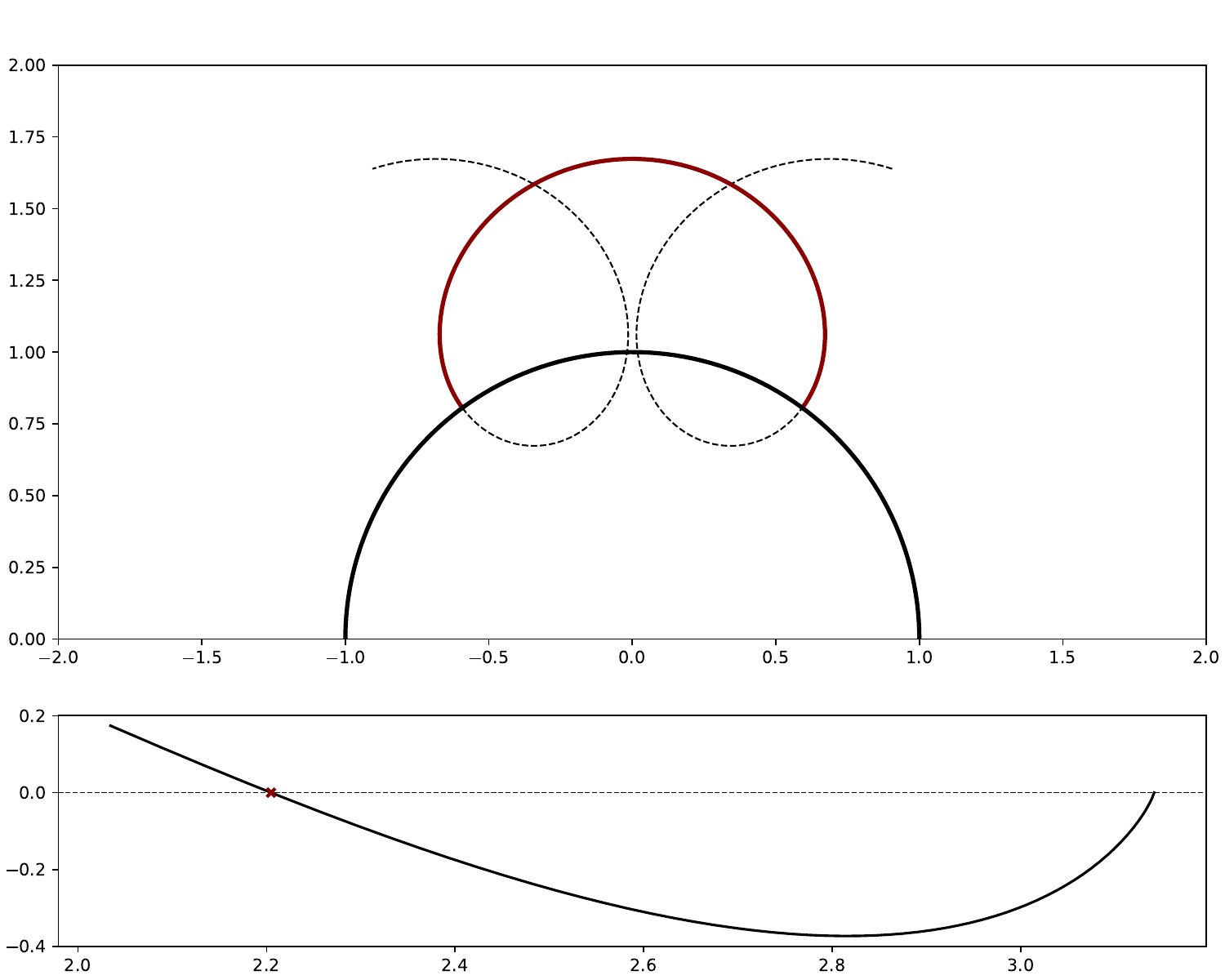}
		\caption{$N^-(a,b)$ with $a=0.5$, $b\approx1.1$}
		\label{fig:fb_nodoid:branch_1_border_2}
	\end{subfigure}
	\vspace{0.5cm} 
	\begin{subfigure}[b]{0.45\textwidth}
		\centering
		\includegraphics[width=\textwidth]{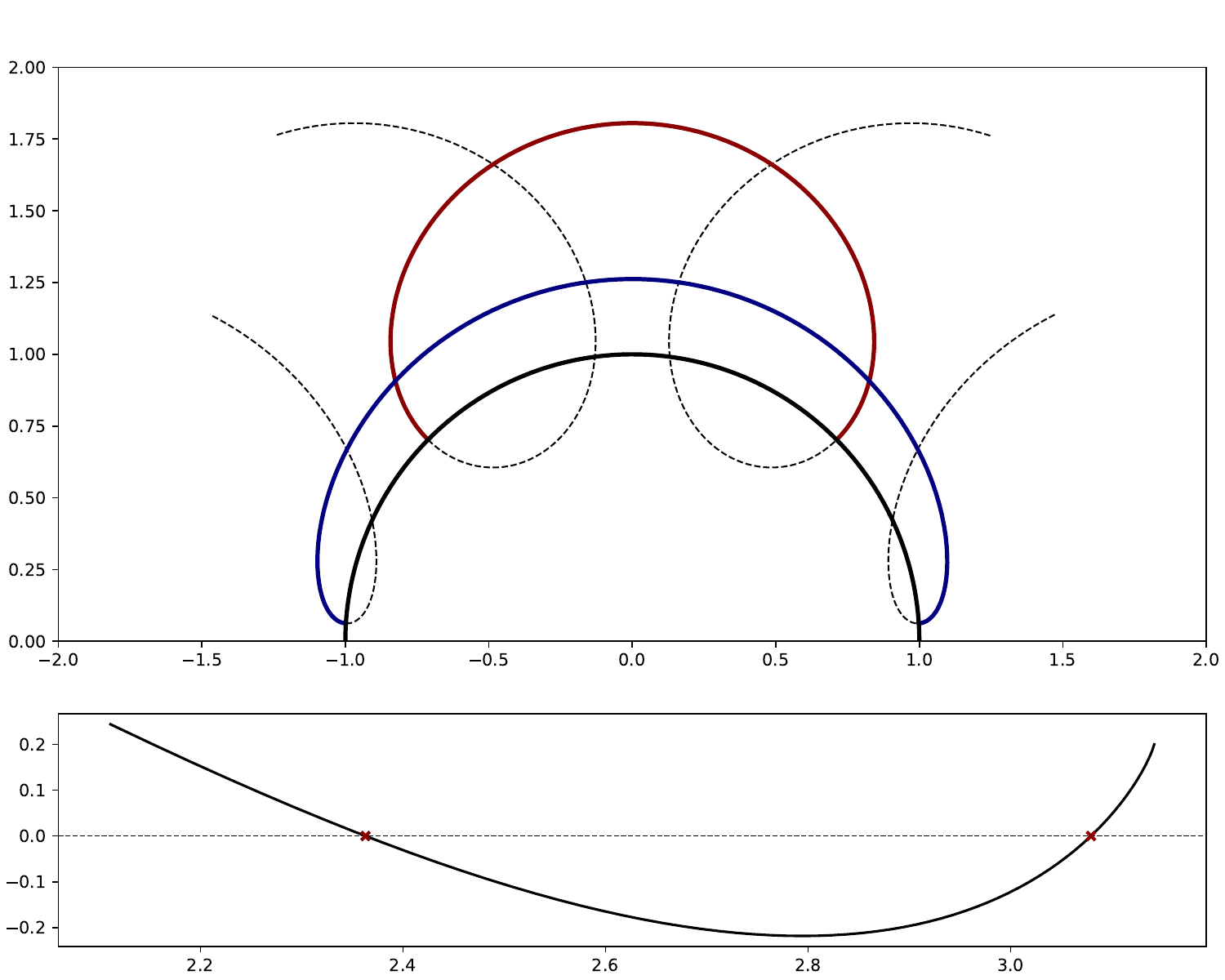}
		\caption{$N^-(a,b_{1/2})$ with $a=0.6$, $b_1\approx1.0$ (red), $b_2\approx0.3$ (blue)}
		\label{fig:fb_nodoid:branch_1_and_2}
	\end{subfigure}
	\hfill 
	\begin{subfigure}[b]{0.45\textwidth}
		\centering
		\includegraphics[width=\textwidth]{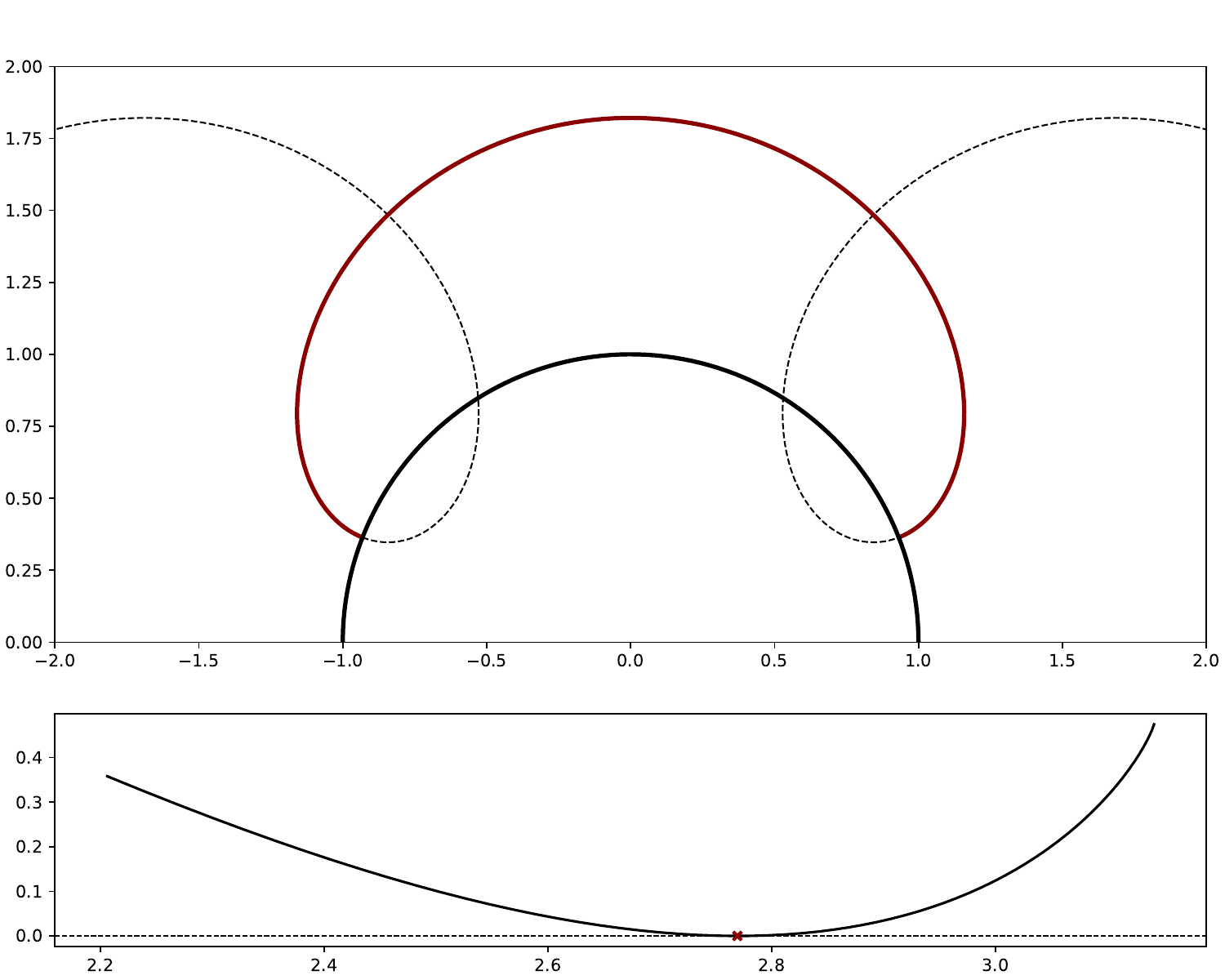}
		\caption{$N^-(a,b)$ with $a\approx0.7$, $b\approx0.8$}
		\label{fig:fb_nodoid:branch_1_and_2_border}
	\end{subfigure}
	\caption{Nodaries $N^-(a,b)$ and the upper unit circle (top) $G^-(a,\cdot)$ on $(\frac{\pi}{2}+\arctan(a),\pi)$ (bottom)}
\end{figure}

\subsubsection{Free boundary Nodoids inside of the unit ball} 
We denote by $N^+(a,b)$ with $a,b>0$ the $2$-parameter family of profile curves whose corresponding surfaces of revolution have constant mean curvature $\lambda=-\frac{1}{2a}$ where the local graph representation~\eqref{eq:intro:rho} attains a minimum at $\rho(0)=-a+\sqrt{a^2+b^2}$, see Equation~\eqref{eq:N}. In Theorem~\ref{thm:nodoid:plus} it is shown that for all $a>0$, there exist unique $b,t>0$ such that $N^+(a,b)$ restricted to $(-t,t)$ lies inside of the unit disk and meets the unit circle orthogonally at $t$, see  also \cite[Proposition~8.3]{MR3708014} and \cite[Section~2.1]{MR4979235}. This family emanates from the north pole and converges to the catenoid, see Figure~\ref{fig:fb_catenoid}. 
\begin{figure}
	\includegraphics[width=0.8\textwidth]{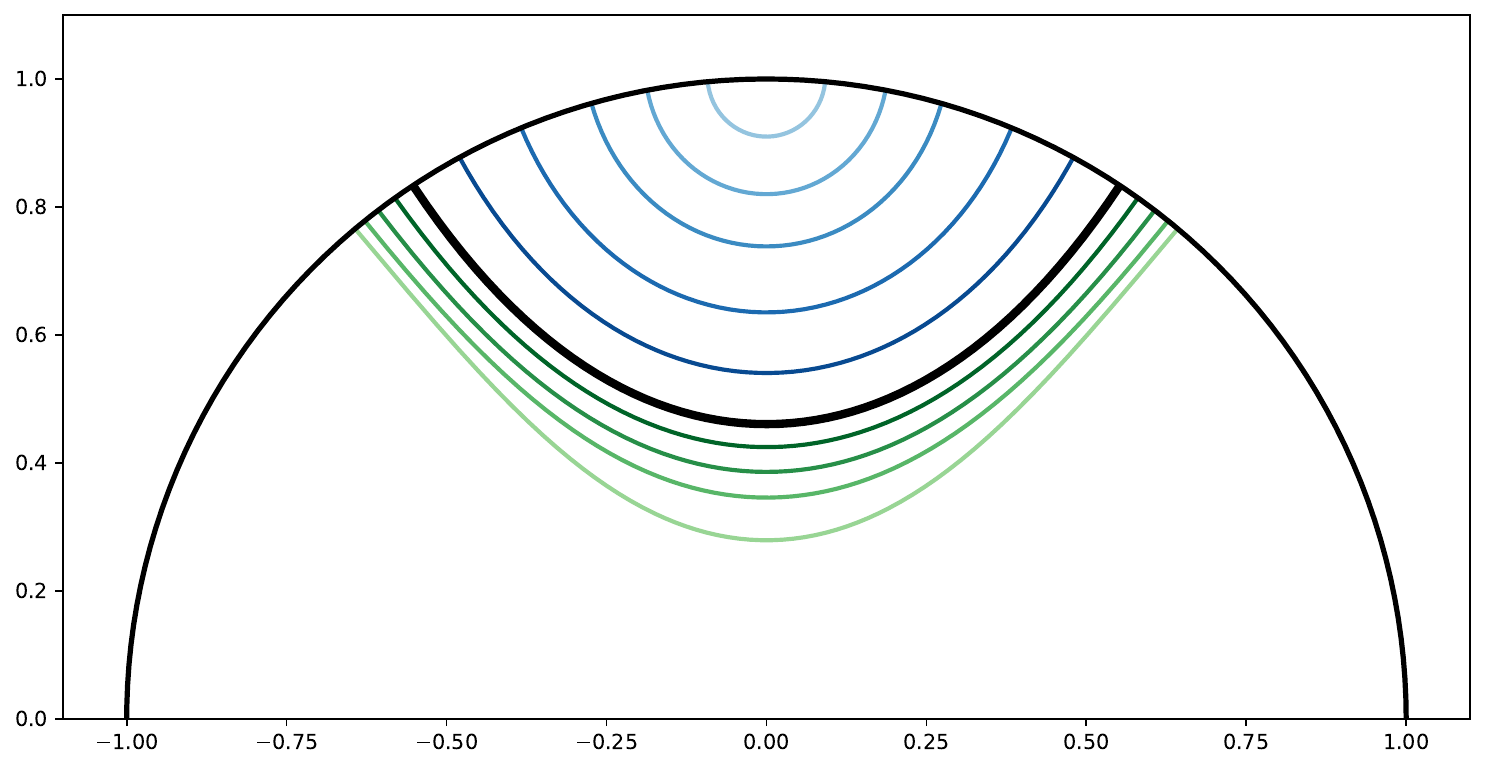}
	\caption{Free boundary nodoids (blue), the free boundary catenoid~(black), and free boundary unduloids (green)}
	\label{fig:fb_catenoid}
\end{figure}

\subsubsection{Free boundary Unduloids inside of the unit ball}
We denote by $U^\pm(a,k)$ with $a>0$ and $k\in(0,1)$ the two $2$-parameter families of profile curves whose corresponding surfaces of revolution have constant mean curvature $\frac{1}{2a}$ where the local graph representation~\eqref{eq:intro:rho} attains a minimum/maximum at $\rho(0)=a(1\mp k)$, see Equation~\eqref{eq:U}. Those unduloids with free boundary on the unit sphere can be characterized as the zeros of the functions $F^\pm$ defined in Proposition~\ref{prp:undulary:intersection}. Given $k,t>0$, there exists a unique $a>0$ such that $U^\pm(a,k)$ restricted to $(-t,t)$ lies inside of the unit disk and meets the unit circle orthogonally at $t$ if and only if $F^\pm(k,t)=0$. There exist positive functions $H^\pm$ such that $\partial_tF^\pm(\cdot,t)=\cos(t)H^\pm(\cdot,t)$, see Lemma~\ref{lem:unduloid}. Thus, $F^\pm$ behave like the sine function. In particular, there are infinitely many branches of free boundary unduloids inside the unit ball classified in Theorem~\ref{thm:unuloid}. The first branch corresponds to $F^+$ and emanates from the catenoid, see Figure~\ref{fig:fb_catenoid}. The second branch corresponds to $F^-$ and emanates from the upper unit circle, see top left in Figure~\ref{fig:fb_unduloids}. The third branch corresponds to $F^+$ and emanates from two touching upper semi circles of radius $r=\frac12$, see top right in Figure~\ref{fig:fb_unduloids}. The fourth branch corresponds to $F^-$ and emanates from three touching semi circles of radius $r=\frac13$, see bottom left in Figure~\ref{fig:fb_unduloids}. This goes on and on: for any $n$ touching semi circles of radius $r=\frac1n$, there exists a corresponding branch of free boundary unduloids, see bottom right in Figure~\ref{fig:fb_unduloids} for $n=5$. Moreover, there exists a second set of branches where the first branch emanates from the double disk, and the touching upper semi circles are shifted, see Figure \ref{fig:fb_unduloids_2}. Despite the vast number of branches, there exists no free boundary unduloid with mean curvature $H=1$, see Theorem~\ref{thm:unduloid:a}.

\begin{figure}[htbp]
	\centering
	\begin{subfigure}[b]{0.45\textwidth}
		\centering
		\includegraphics[width=\textwidth]{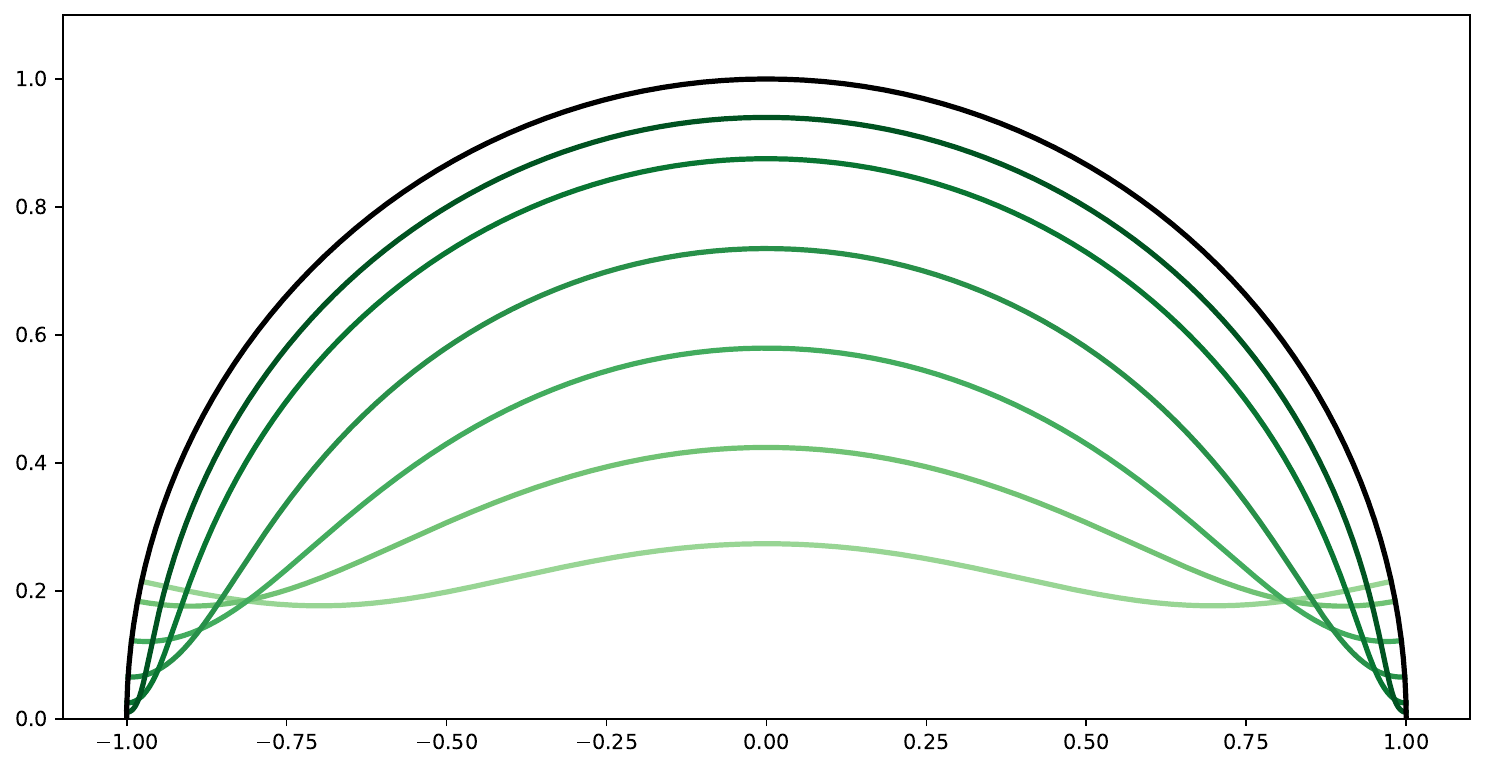}
	\end{subfigure}
	\hfill
	\begin{subfigure}[b]{0.45\textwidth}
		\centering
		\includegraphics[width=\textwidth]{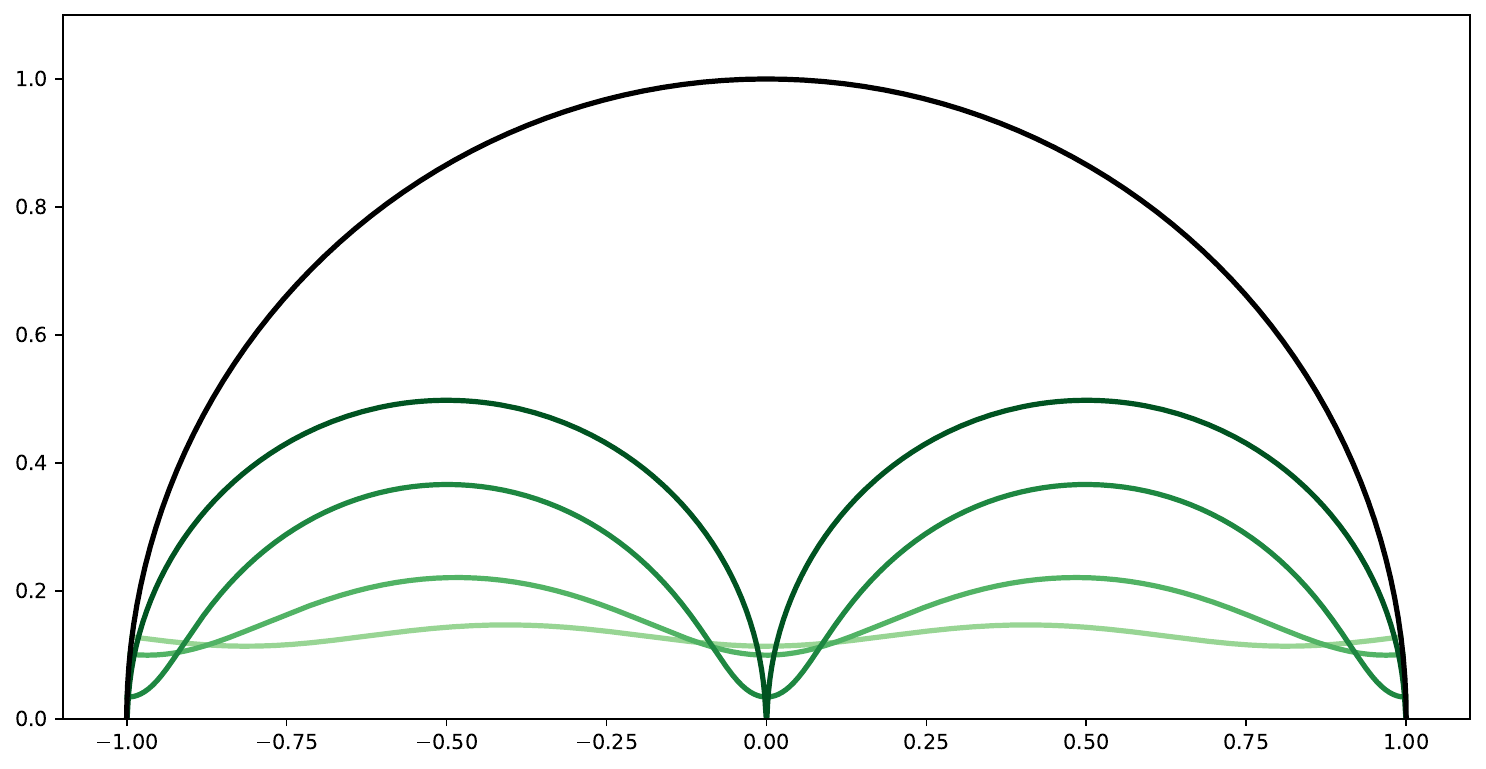}
	\end{subfigure}
	\vspace{0.5cm}
	\begin{subfigure}[b]{0.45\textwidth}
		\centering
		\includegraphics[width=\textwidth]{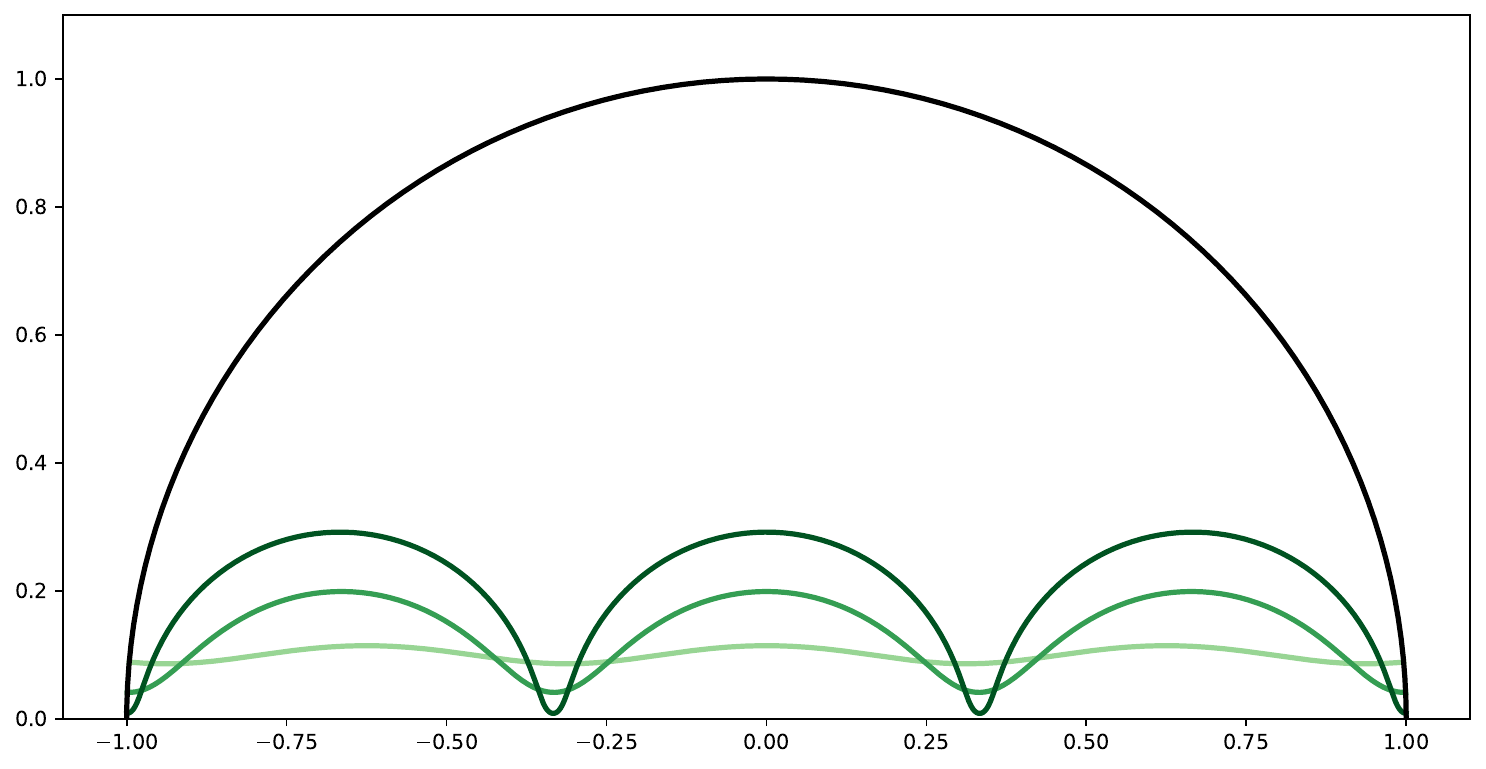}
	\end{subfigure}
	\hfill 
	\begin{subfigure}[b]{0.45\textwidth}
		\centering
		\includegraphics[width=\textwidth]{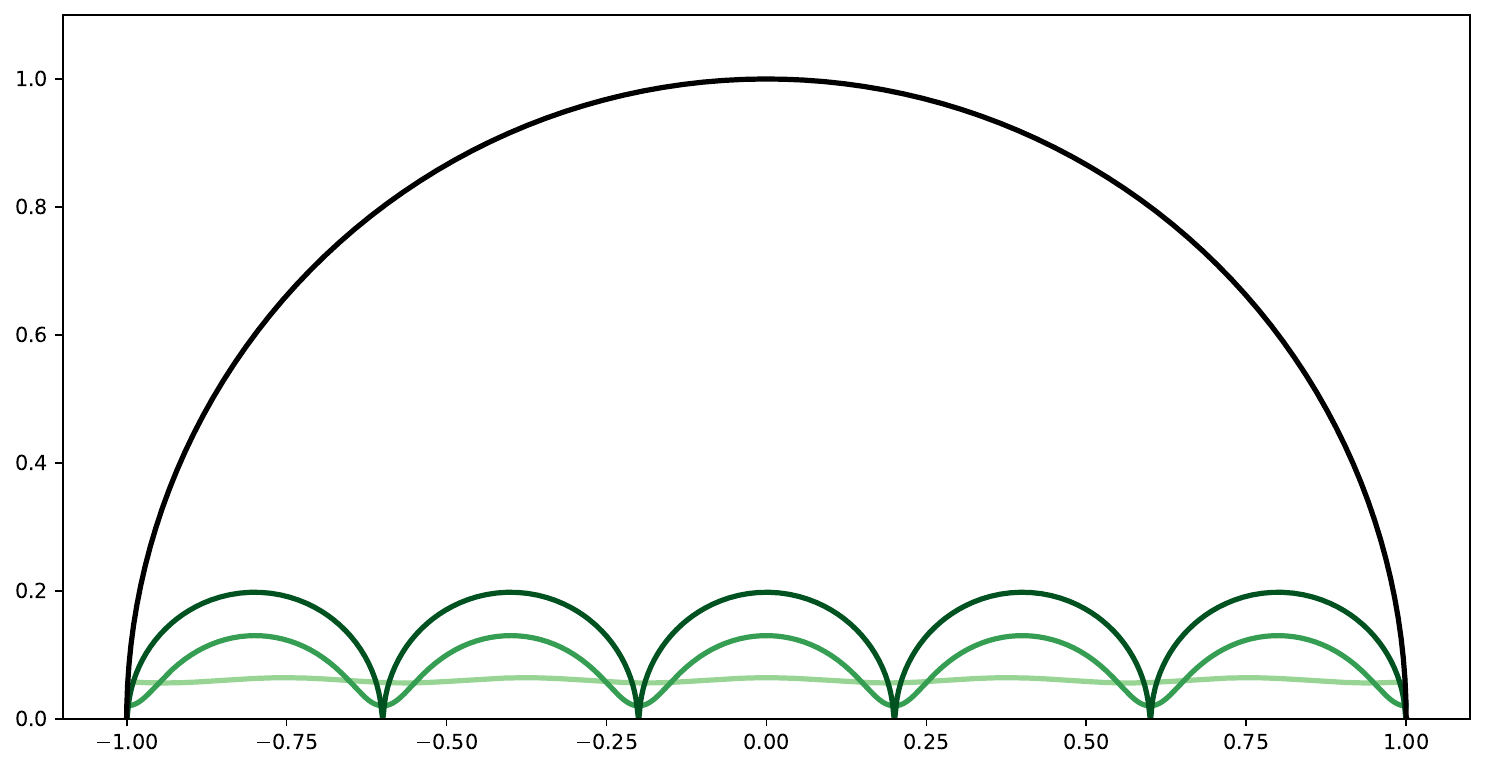}
	\end{subfigure}
	\caption{Free boundary unduloids}
	\label{fig:fb_unduloids}
\end{figure}

\begin{figure}[htbp]
	\centering
	\begin{subfigure}[b]{0.45\textwidth}
		\centering
		\includegraphics[width=\textwidth]{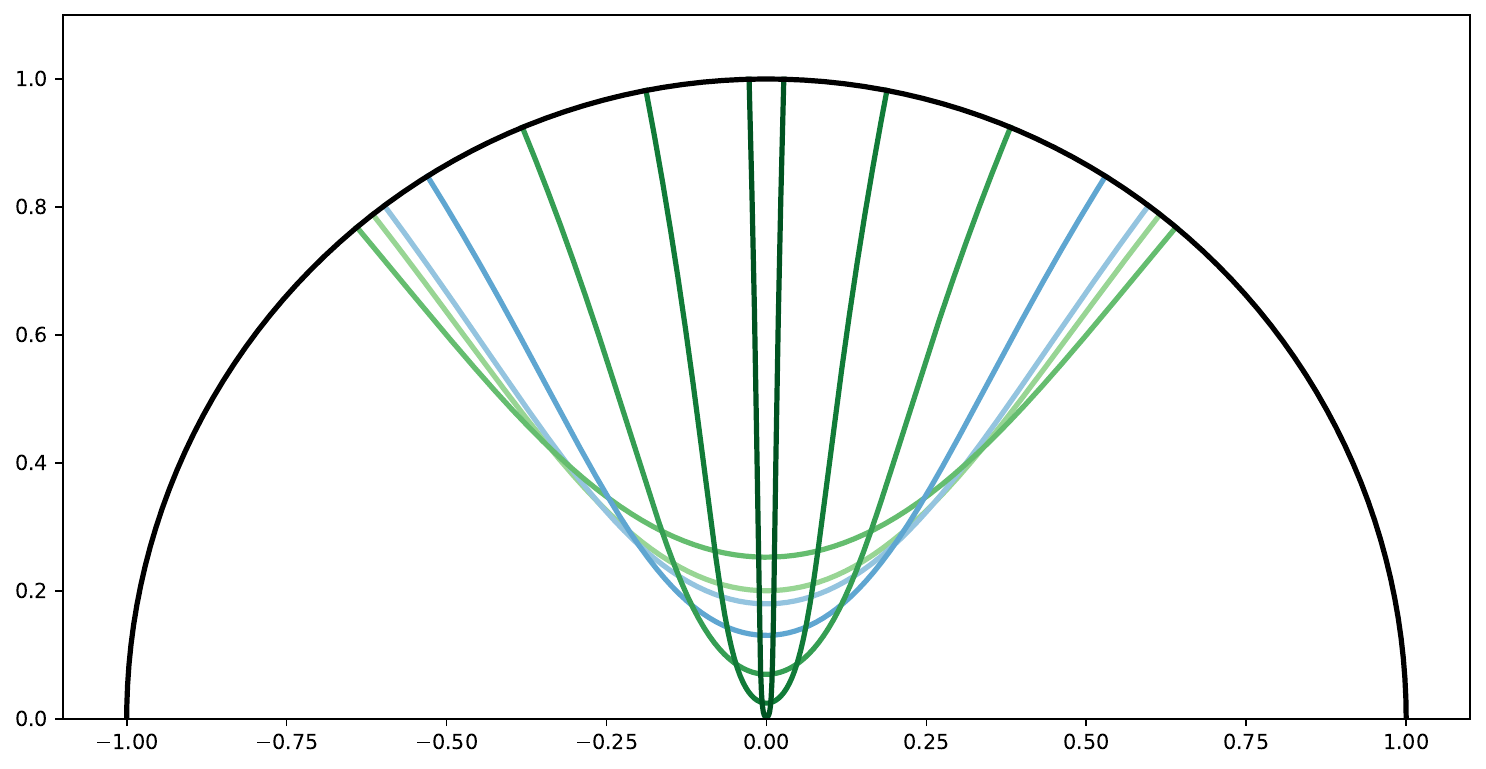}
	\end{subfigure}
	\hfill
	\begin{subfigure}[b]{0.45\textwidth}
		\centering
		\includegraphics[width=\textwidth]{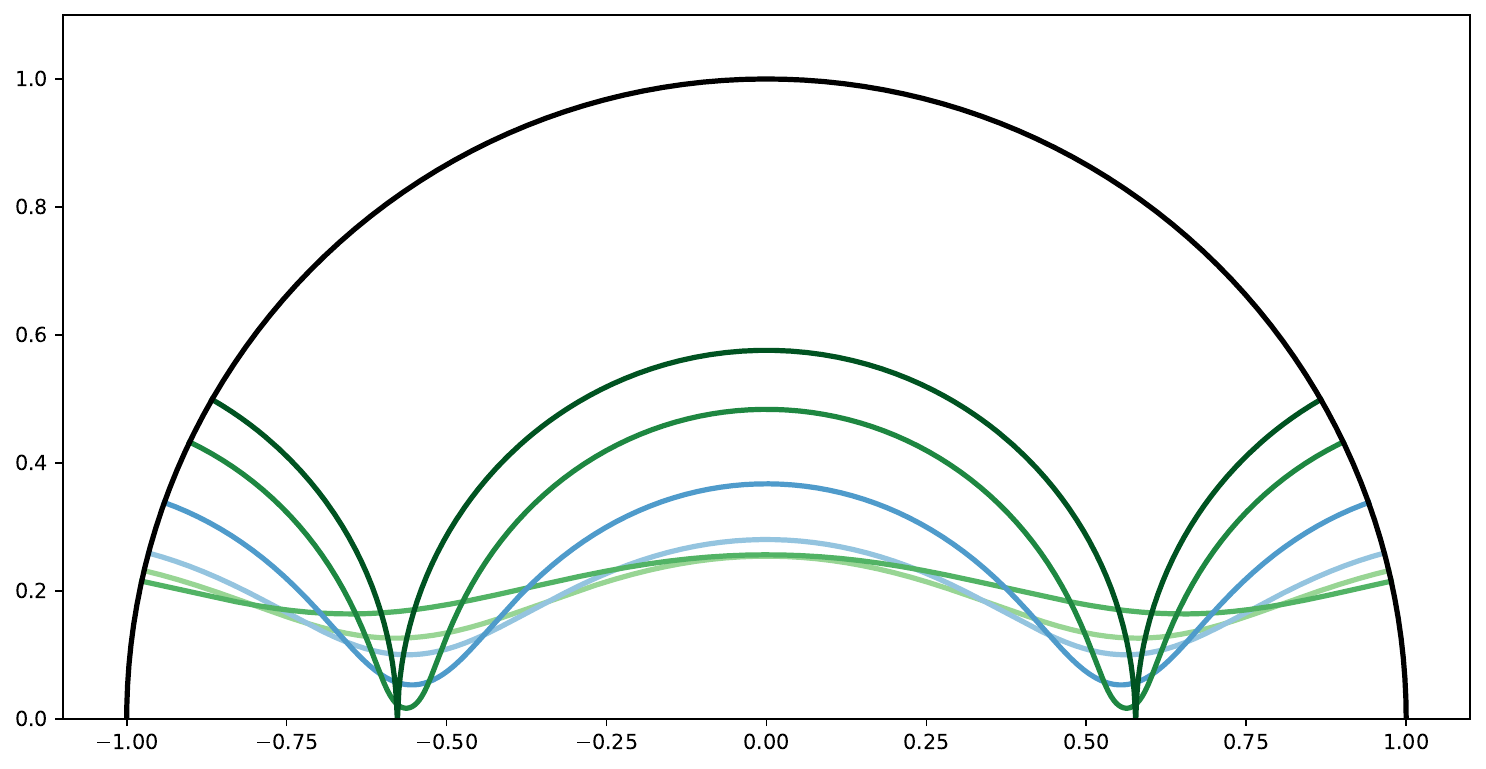}
	\end{subfigure}
	\vspace{0.5cm}
	\begin{subfigure}[b]{0.45\textwidth}
		\centering
		\includegraphics[width=\textwidth]{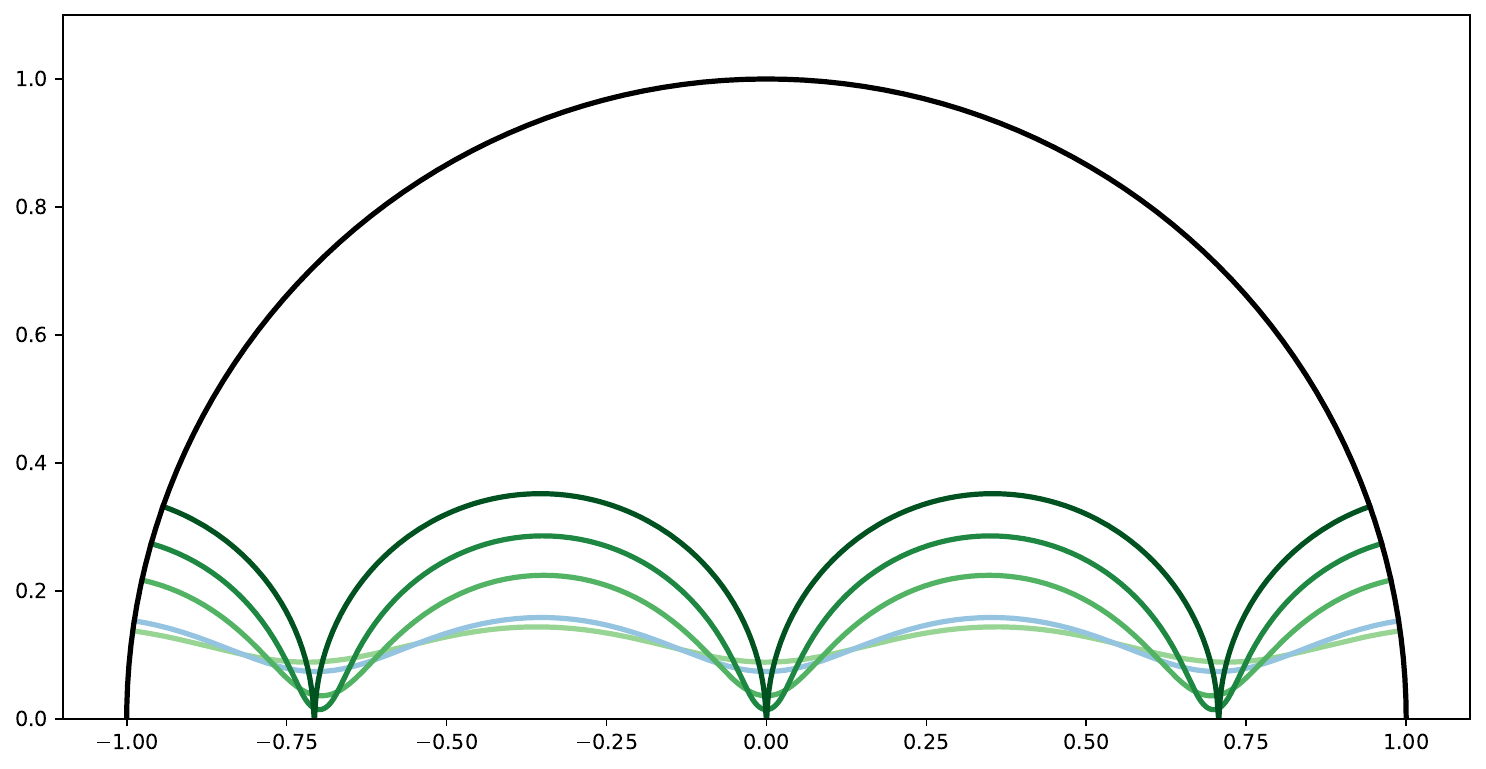}
	\end{subfigure}
	\hfill 
	\begin{subfigure}[b]{0.45\textwidth}
		\centering
		\includegraphics[width=\textwidth]{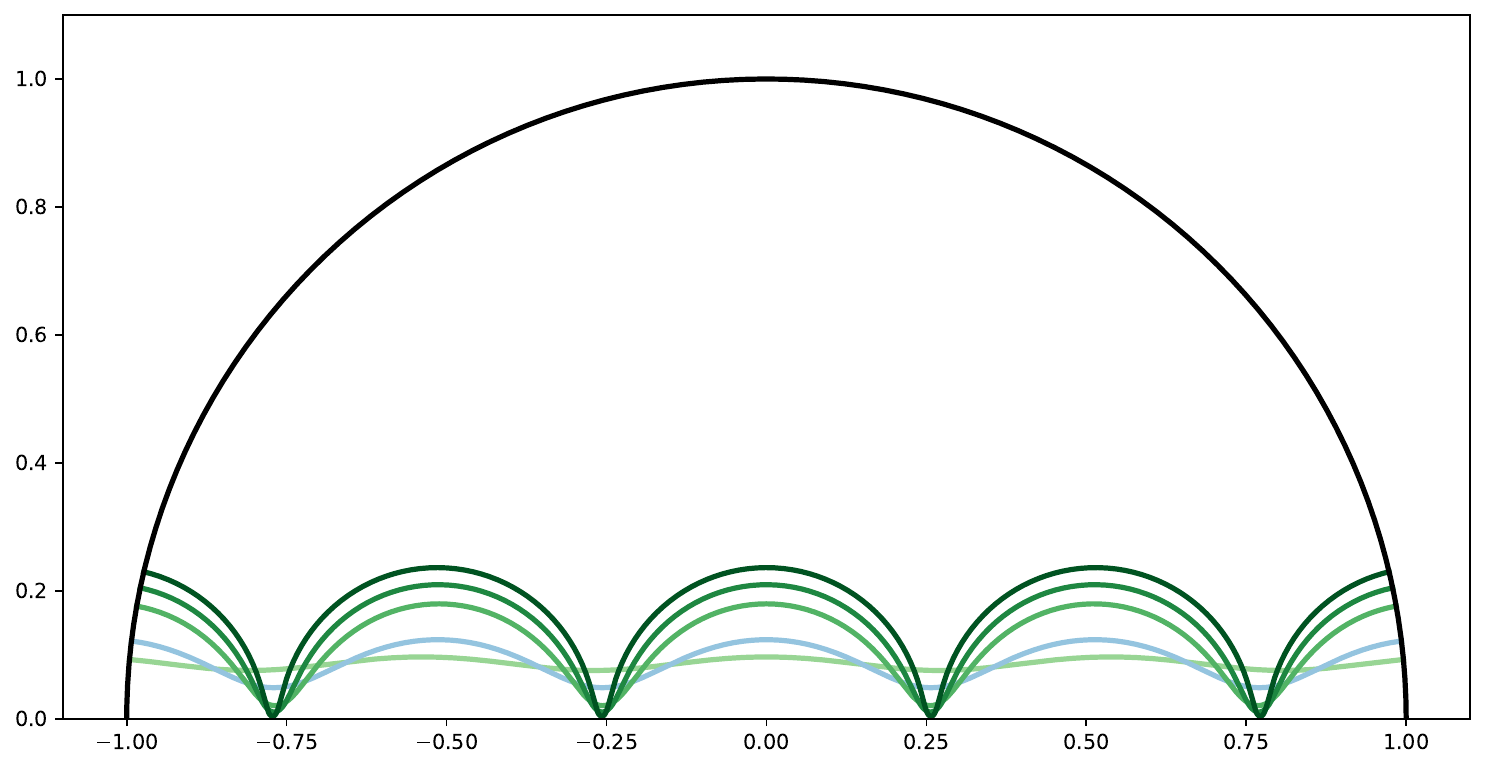}
	\end{subfigure}
	\caption{Free boundary unduloids}
	\label{fig:fb_unduloids_2}
\end{figure}

%% file: Contents/roulettes.tex
\section{Roulettes of the conics}\label{sec:roulettes}
Consider a plane curve $\alpha$ and a point $F$ attached to the curve (not necessarily on the curve). The trace $\gamma=(g,f)$ of $F$ that results as the curve $\alpha$ rolls along a straight line without slipping is called the \textit{roulette of $\alpha$ at $F$}. The main reference for this section is the work of \textsc{Bendito--Bowick--Medina}~\cite{BenditoBowickMedina2014JGSP}.

\subsection{Roulette of the parabola}
The $1$-parameter family of parabolas in the plane corresponding to $b>0$ is given by the parametrized curves
\begin{equation*}
	\alpha_\mathrm{p}:\R\to\R^2,\quad \alpha_\mathrm{p}(t)=(b\sinh^2(t),2b\sinh (t))
\end{equation*}
which describe the solutions to the equation
\begin{equation*}
	x - \frac{y^2}{4b} = 0.
\end{equation*}

\begin{proposition}[Catenary, \protect{\cite{BenditoBowickMedina2014JGSP}}]\label{prp:catenary}
	The trace of the focus $F=(b,0)$ of the parabola with parameter $b>0$ rolled vertically is
	\begin{equation*}
		(g_\mathrm{c}(t),f_\mathrm{c}(t))=(bt,b\cosh (t))
	\end{equation*}
	for $t\in\R$.
\end{proposition}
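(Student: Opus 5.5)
We set up the rolling kinematics explicitly and then compute. The parabola $\alpha_\mathrm{p}$ rolls along the $x$-axis, which we take as the fixed line. At parameter $t$, the contact point is $\alpha_\mathrm{p}(t)$, and it sits at the point of the line whose abscissa equals the arc length $s(t)$ measured from the vertex. The unit tangent $T(t)=\dot\alpha_\mathrm{p}/|\dot\alpha_\mathrm{p}|$ is aligned with the direction $(1,0)$ of the line. The rigid motion sending $\alpha_\mathrm{p}(t)\mapsto(s(t),0)$ and $T(t)\mapsto(1,0)$ sends the focus to the roulette point. Writing $N(t)$ for the unit normal obtained by rotating $T(t)$ by $+\pi/2$, this gives
\begin{equation*}
	\gamma(t)=\bigl(s(t)+\langle F-\alpha_\mathrm{p}(t),T(t)\rangle,\ \langle F-\alpha_\mathrm{p}(t),N(t)\rangle\bigr),
\end{equation*}
possibly up to a reflection fixing the sign convention. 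We choose that convention so that the focus ends up above the line, i.e.\ on the side of the line where the parabola lies.

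The computations are short. We have $\dot\alpha_\mathrm{p}=(2b\sinh t\cosh t,\,2b\cosh t)=2b\cosh t\,(\sinh t,1)$, so $|\dot\alpha_\mathrm{p}|=2b\cosh^2 t$ and
\begin{equation*}
	T=\Bigl(\tanh t,\ \tfrac{1}{\cosh t}\Bigr),\qquad N=\Bigl(-\tfrac{1}{\cosh t},\ \tanh t\Bigr).
\end{equation*}
The arc length from the vertex is $s(t)=\int_0^t 2b\cosh^2\tau\,\d\tau=b\bigl(t+\sinh t\cosh t\bigr)$. Moreover $F-\alpha_\mathrm{p}(t)=\bigl(b(1-\sinh^2 t),\,-2b\sinh t\bigr)$. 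Then
\begin{equation*}
	\langle F-\alpha_\mathrm{p},T\rangle=b\,\frac{\sinh t\,(1-\sinh^2 t)-2\sinh t}{\cosh t}=-b\,\frac{\sinh t\,(1+\sinh^2 t)}{\cosh t}=-b\sinh t\cosh t.
\end{equation*}
This exactly cancels the nonlinear part of $s(t)$ and yields $g(t)=bt$. Similarly,
\begin{equation*}
	\langle F-\alpha_\mathrm{p},N\rangle=-b\,\frac{(1-\sinh^2 t)+2\sinh^2 t}{\cosh t}=-b\cosh t,
\end{equation*}
so after the orientation choice, which amounts to reflecting across the line, $f(t)=b\cosh t$.

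The main obstacle is not the algebra but fixing conventions consistently. One must decide the orientation of $T$ relative to the direction of rolling, the sign of $N$, and the base point of the arc length, so that the formula reads exactly $(bt,b\cosh t)$ and not a reflected or translated copy. We would settle this by checking $t=0$. There the vertex $(0,0)$ touches the line at the origin and the tangent $(0,1)$ is rotated to $(1,0)$. The focus $(b,0)$ lies at distance $b$ from the vertex along the axis, so it must land at $(0,b)$, which matches $(b\cdot0,b\cosh 0)$. This pins down the signs.
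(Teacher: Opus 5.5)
Your computation is correct. With the proper motion sending $\alpha_\mathrm{p}(t)\mapsto(s(t),0)$ and $T(t)\mapsto(1,0)$, the term $\langle F-\alpha_\mathrm{p},T\rangle=-b\sinh t\cosh t$ cancels the nonlinear part of $s(t)=b(t+\sinh t\cosh t)$. Together with $\langle F-\alpha_\mathrm{p},N\rangle=-b\cosh t$, this shows the focus traces $(bt,-b\cosh t)$.

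The paper gives no argument for this proposition; it simply imports it from Bendito--Bowick--Medina. There, the roulettes of all three conics come out of one general rolling formula, essentially your contact-frame formula $s(t)+(F-\alpha(t))\overline{T(t)}$ in complex notation. Your route is therefore a self-contained, elementary check specialised to the parabola. It has the advantage of making explicit the conventions hidden in the phrase ``rolled vertically'': initial contact at the vertex, arc length measured from the vertex, and the choice of side of the line.

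The one step you leave informal is the final reflection across the line, since rolling is by proper rigid motions. It is legitimate because the parabola is symmetric about its axis and $F$ lies on that axis. Concretely, take $\tau(x,y)=(x,-y)$ and let $\Phi_t$ be your motion. Then $\tau\circ\Phi_t\circ\tau$ is again a proper rolling without slipping of the same parabola, now lying above the line, with contact at the material point $\alpha_\mathrm{p}(-t)$. It sends $F=\tau(F)$ to $\tau(\Phi_t(F))=(bt,b\cosh t)$. Equivalently, rolling on top of the line with $T\mapsto(-1,0)$ and contact point $(-s(t),0)$ gives $(-bt,b\cosh t)$, which is the same curve after $t\mapsto-t$.
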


\subsection{Roulette of the ellipse}
The $2$-parameter family of ellipses in the plane corresponding to $a > b > 0$ is given by the parametrized curves
\begin{equation*}
	\alpha_\mathrm{e}:[0,2\pi)\to\R^2,\quad \alpha_\mathrm{e}(t)=(a\cos(t), b\sin(t))
\end{equation*}
which describe the solutions to the equation
\begin{equation*}
	\frac{x^2}{a^2} + \frac{y^2}{b^2} = 1.
\end{equation*}

\begin{proposition}[\protect{Undulary, \cite{BenditoBowickMedina2014JGSP}}] \label{prp:undulary}
	The traces of the foci $F_\pm=(\pm c,0)$ of the ellipse with parameters $a>b>0$ and $c=\sqrt{a^2-b^2}$ rolled horizontally are
	\begin{align}\label{eq:undulary:g}
		g_{\mathrm u}^{\pm}(t) & = \int_{0}^{t} \sqrt{a^2-c^2\cos^2(x)} \,\d x \mp \frac{c\sin(t)(a\mp c\cos(t))}{\sqrt{a^2-c^2\cos^2(t)}}, \\ \label{eq:undulary:f}
		f_{\mathrm u}^{\pm}(t) & = \frac{b(a\mp c\cos(t))}{\sqrt{a^2-c^2\cos^2(t)}}
	\end{align}
	for $t \in \R$.
\end{proposition}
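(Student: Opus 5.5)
We prove the formulas by writing the rolling motion explicitly as a time-dependent rigid motion and then evaluating it at the focus. This is the same mechanism that underlies Proposition~\ref{prp:catenary}.

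\textbf{Setting up the roulette formula.} Parametrize the ellipse by $\alpha=\alpha_\mathrm{e}$. Let $s(t)=\int_0^t|\alpha'(x)|\,\d x$ be its arc length from the vertex $\alpha(0)=(a,0)$. We assume the curve starts touching the $x$-axis at the origin with $\alpha(0)$ as contact point. Rolling without slipping means the following: at parameter $t$ there is a unique orientation-preserving rigid motion $\Phi_t$ of the plane such that
\begin{itemize}
\item $\Phi_t(\alpha(t))=(s(t),0)$;
\item $\Phi_t$ sends the unit tangent $T(t)=\alpha'(t)/|\alpha'(t)|$ to $e_1=(1,0)$;
\item $\Phi_t$ places the ellipse in the upper half-plane.
\end{itemize}
Write $N(t)$ for the unit normal of $\alpha$ that $\Phi_t$ sends to $e_2=(0,1)$. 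The trace of an attached point $F$ is then
\begin{equation*}
(g(t),f(t))=\Phi_t(F)=\bigl(s(t)+\langle F-\alpha(t),T(t)\rangle,\ \langle F-\alpha(t),N(t)\rangle\bigr).
\end{equation*}
So $g$ is the arc length plus the tangential offset of $F$ from the contact point. Since the foci lie inside the convex ellipse, $f$ is the (positive) distance from $F$ to the tangent line at $\alpha(t)$.

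\textbf{Computation.} We have $\alpha'(t)=(-a\sin t,b\cos t)$. Using $b^2=a^2-c^2$,
\begin{equation*}
|\alpha'(t)|^2=a^2\sin^2t+b^2\cos^2t=a^2-c^2\cos^2t .
\end{equation*}
This immediately gives the integral term in \eqref{eq:undulary:g}.

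For $F_\pm=(\pm c,0)$ we have $F_\pm-\alpha(t)=(\pm c-a\cos t,\,-b\sin t)$. The tangential component is
\begin{equation*}
\langle F_\pm-\alpha,\alpha'\rangle=\mp ac\sin t+(a^2-b^2)\sin t\cos t=\mp c\sin t\,(a\mp c\cos t).
\end{equation*}
Dividing by $|\alpha'|$ yields the second term of \eqref{eq:undulary:g}.

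For the normal component we use the planar cross product:
\begin{equation*}
(F_\pm-\alpha)\times\alpha'=(\pm c-a\cos t)\,b\cos t-ab\sin^2t=-b(a\mp c\cos t).
\end{equation*}
Taking absolute values and dividing by $|\alpha'|$ gives \eqref{eq:undulary:f}. Here $a\mp c\cos t>0$ because $c<a$.

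\textbf{Main obstacle.} The only delicate point is fixing orientations consistently. The direction of $T$ must make the contact point move to the right as $s(t)$ increases. The sign convention for $N$ must make the ellipse lie above the line, so that $f>0$.

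I would check this at $t=0$. There $\alpha'(0)=(0,b)$, so the tangent at the contact point is vertical in the original frame, and $\Phi_0$ rotates it to horizontal. The rolled focus $F_+$ must then sit at height $a-c$ directly above the origin. This is consistent with $g^+(0)=0$ and $f^+(0)=b(a-c)/\sqrt{a^2-c^2}=b(a-c)/b=a-c$.

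Finally, the formulas extend to all $t\in\R$ because every quantity involved is $2\pi$-compatible and smooth, with $|\alpha'|>0$ everywhere.
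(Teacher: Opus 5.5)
Your proof is correct. Since $\alpha_\mathrm{e}$ is traversed counterclockwise, rotating $T$ by $+\pi/2$ gives the inward normal, so the orientation-preserving motion with $T\mapsto e_1$ automatically places the ellipse above the line; hence $\langle F_\pm-\alpha(t),N(t)\rangle = b(a\mp c\cos t)/\sqrt{a^2-c^2\cos^2 t}$ is positive without taking absolute values, and your arc-length and tangential terms reproduce \eqref{eq:undulary:g}--\eqref{eq:undulary:f}. The paper gives no proof of its own and only cites \cite{BenditoBowickMedina2014JGSP}, where the formulas come from the same general roulette construction (arc length to the contact point plus the tangential and normal components of $F-\alpha(t)$), so your approach is essentially the standard derivation behind the cited result.
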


\begin{remark}\label{rem:undulary}
	The two branches $(g_\mathrm{u}^+,f_\mathrm{u}^+)$ and $(g_\mathrm{u}^-,f_\mathrm{u}^-)$ corresponding to the foci $(c,0)$ and $(-c,0)$ are the two horizontal translations of the same curve referred to as \emph{undulary} that are symmetric with respect to the $y$-axis. We have 
	\begin{align*}
		f_\mr{u}^\pm(-t)&=f_\mr{u}^\pm(t),\\
		g_\mr{u}^\pm(-t)&=-g_\mr{u}^\pm(t)
	\end{align*} 
	and
	\begin{align*}
		f_\mr{u}^+(t+\pi)&=f_\mr{u}^-(t),\\
		g_\mr{u}^+(t+\pi)&=g_\mr{u}^-(t)+\int_{0}^{\pi}\sqrt{a^2-c^2\cos^2(x)} \,\d x.
	\end{align*} 
\end{remark}

\begin{figure}
	\centering
	\includegraphics[width=1\textwidth]{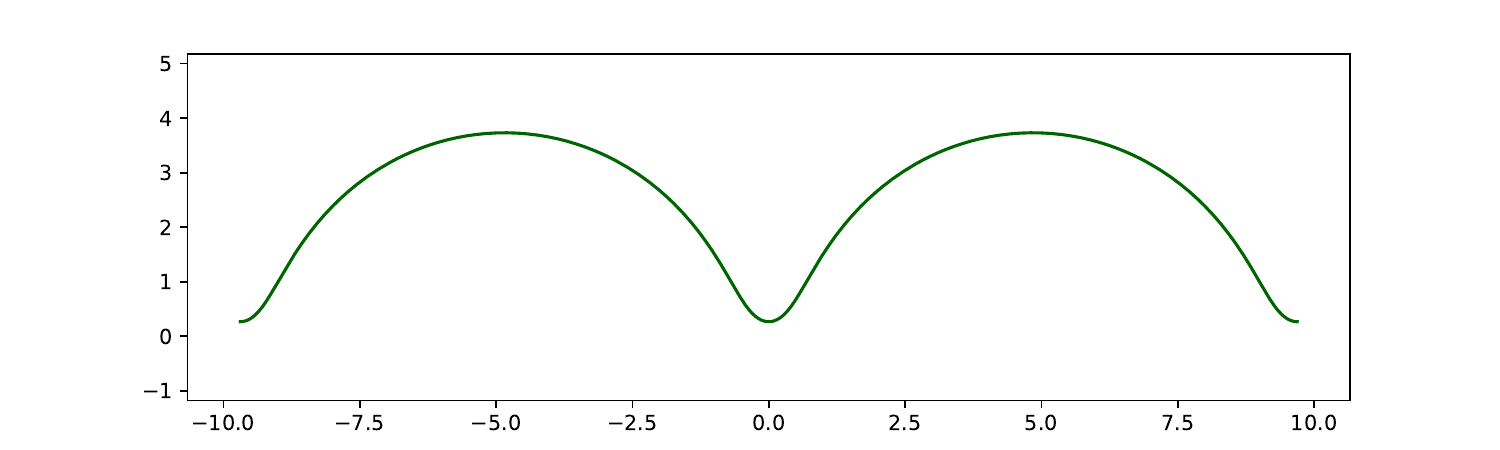}
	\caption{Undulary $(f_\mathrm{u}^+,g_\mathrm{u}^+)$ with $a=2$, $b=1$, and $t\in[-2\pi,2\pi]$}
	\label{fig:undulary}
\end{figure}

\subsection{Roulette of the hyperbola}
The $2$-parameter family of hyperbola branches in the half-plane on the right corresponding to $a,b>0$ is given by the parametrized curves 
\begin{equation*}
	\alpha_\mathrm{h}:\R\to\R^2,\quad \alpha_\mathrm{h}(t)=(a\cosh(t),b\sinh(t))
\end{equation*}
which describe the solutions to the equation
\begin{equation*}
	\frac{x^2}{a^2}-\frac{y^2}{b^2}=1
\end{equation*}
with $x>0$.

\begin{proposition}[\protect{Nodary, \cite{BenditoBowickMedina2014JGSP}}]
	The traces of the foci $F_\pm=(\pm c,0)$ of the hyperbola branch on the right with parameters $a,b>0$ and $c=\sqrt{a^2+b^2}$ rolled vertically are
	\begin{align*}
		g_{\mathrm n}^{\pm}(t) &= \int_{0}^{t} \sqrt{c^2\cosh^2(x)-a^2} \,\d x - \frac{c\sinh(t)(c\cosh(t)\mp a)}{\sqrt{c^2\cosh^2(t)-a^2}},\\
		f_{\mathrm n}^{\pm}(t) &= \frac{b(c\cosh(t)\mp a)}{\sqrt{c^2\cosh^2(t)-a^2}}
	\end{align*}
	for $t\in\R$.
\end{proposition}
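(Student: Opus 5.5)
We will derive the nodary formulas by computing the roulette explicitly: roll the right hyperbola branch $\alpha_\mr{h}(t)=(a\cosh t,b\sinh t)$ along the vertical axis and track the image of the focus $F_\pm=(\pm c,0)$. At parameter $t$, the rolling is the rigid motion that sends $\alpha_\mr{h}(t)$ to the contact point $(0,s(t))$ on the line, where
\begin{equation*}
	s(t)=\int_0^t|\alpha_\mr{h}'(x)|\,\d x=\int_0^t\sqrt{a^2\sinh^2x+b^2\cosh^2x}\,\d x=\int_0^t\sqrt{c^2\cosh^2x-a^2}\,\d x ,
\end{equation*}
using $b^2=c^2-a^2$. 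The motion also rotates the unit tangent $T(t)=\alpha_\mr{h}'(t)/|\alpha_\mr{h}'(t)|$ onto the vertical direction $(0,1)$, with the curve lying to one fixed side of the line. Since $t=0$ is the vertex $(a,0)$ with vertical tangent, the normalization at $t=0$ is consistent: $F_\pm$ is initially at horizontal distance $c\mp a$ from the contact point.

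The image of $F_\pm$ is then obtained by decomposing the vector $F_\pm-\alpha_\mr{h}(t)$ into its tangential and normal components with respect to $T(t)$. The tangential component becomes the $g$-displacement from $s(t)$, and the normal component becomes the $f$-coordinate, i.e.\ the distance from $F_\pm$ to the tangent line. Writing $w(t)=\sqrt{c^2\cosh^2t-a^2}$, we have $T=(a\sinh t,b\cosh t)/w$ and normal $\nu=(b\cosh t,-a\sinh t)/w$. Then
\begin{equation*}
	(F_\pm-\alpha_\mr{h})\cdot\nu=\frac{\pm cb\cosh t-ab\cosh^2t+ab\sinh^2t}{w}=\frac{-b(a\mp c\cosh t)}{w}.
\end{equation*}
Up to the orientation sign this is $f_\mr{n}^\pm$. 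For the tangential part,
\begin{equation*}
	(F_\pm-\alpha_\mr{h})\cdot T=\frac{\pm ca\sinh t-a^2\cosh t\sinh t-b^2\sinh t\cosh t}{w}=\frac{-c\sinh t\,(c\cosh t\mp a)}{w},
\end{equation*}
using $a^2+b^2=c^2$. Adding $s(t)$ gives $g_\mr{n}^\pm$.

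The main obstacle is the bookkeeping of orientation and signs. One must fix which side of the line the hyperbola lies on so that $f_\mr{n}^\pm>0$; indeed $c\cosh t\mp a>0$ since $c>a$. One must also check the sign of the tangential term, since the contact point advances along the line in the direction of $T$. A useful check of the computation is that $t=0$ yields $g=0$ and $f=b(c\mp a)/\sqrt{c^2-a^2}$, which is the correct distance from the focus to the vertex tangent, namely $c\mp a$, because $\sqrt{c^2-a^2}=b$.
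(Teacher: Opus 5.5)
The paper gives no proof of this proposition; it only imports the formulas from Bendito--Bowick--Medina. Your argument is therefore a different, self-contained route. You derive the formulas from the standard roulette formula $P(t)=(0,s(t))+R_t\bigl(F_\pm-\alpha_\mathrm{h}(t)\bigr)$, where $R_t$ is the rotation taking $T(t)$ to the direction in which the contact point advances. Your computations are correct: the arc length $s(t)$, both projections of $F_\pm-\alpha_\mathrm{h}$ onto $T$ and $\nu$, and hence $g_\mathrm{n}^\pm=s+(F_\pm-\alpha_\mathrm{h})\cdot T$. The sign of the tangential term is exactly this no-slip convention. The citation gives the paper brevity and a uniform treatment of all three conics; your route gives an elementary, checkable derivation.

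The one point to tighten is the sign of $f$, which you flag but resolve loosely. Your own normal components are $b(c\cosh t-a)/w>0$ for $F_+$ and $-b(c\cosh t+a)/w<0$ for $F_-$. This is not an artifact of orientation. The tangent line $\frac{x\cosh t}{a}-\frac{y\sinh t}{b}=1$ always separates $F_+$ (concave side of the branch) from $F_-$ (convex side). So in any single rolling the two traces lie on opposite sides of the line, and no one choice of side makes both $f$-coordinates positive; the inequality $c\cosh t\mp a>0$ only computes the absolute value. Consequently, the stated $f_\mathrm{n}^-$ is the trace of $F_-$ when the branch rolls on the other side of the line. That is the mirror image of your trace across the line, which leaves $g_\mathrm{n}^-$ unchanged. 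Equivalently, read $f$ as the distance to the line, which is all a profile curve of a surface of revolution needs. This is precisely the convention under which the branches glue via $f_\mathrm{n}^+(s+\pi)=f_\mathrm{n}^-(s)$ in Remark~\ref{rem:nodary}. There a single focus traces the whole nodary as the hyperbola rolls through its asymptote onto the other branch, which then lies on the opposite side of the line. State this explicitly and the proof is complete.
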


\begin{remark}\label{rem:nodary}
	After performing the parameter transformation $t = \arcsinh u$ and $u=\frac{b}{c}\tan s$, the two branches $(g_\mathrm{n}^+,f_\mathrm{n}^+)$ and $(g_\mathrm{n}^-,f_\mathrm{n}^-)$ corresponding to the foci $(c,0)$ and $(-c,0)$ can be glued together to a single, smooth periodic curve referred to as \emph{nodary}. The resulting two horizontal translations that are symmetric with respect to the vertical axis are 
	\begin{align}\label{eq:nodary:g}
		\displaystyle g_{\mathrm n}^\pm(s) &= -a^2b^2 \int_0^s \frac{\sin^2(x)}{{\left(b^2+a^2\cos^2(x)\right)}^{\frac{3}{2}}} \,\d x -\frac{a^2\sin(s)\cos(s)}{\sqrt{b^2+a^2\cos^2(s)}} \pm a\sin(s),\\ \label{eq:nodary:f}
		\displaystyle f_{\mathrm n}^\pm(s) &= \sqrt{b^2+a^2\cos^2(s)} \mp a\cos(s)
	\end{align}
	for $s\in\R$, see \cite[Section 5.1]{Scharrer22NonAna}. We have
	\begin{align*}
		f_\mr{n}^\pm(-s)&=f_\mr{n}^\pm(s),\\
		g_\mr{n}^\pm(-s)&=-g_\mr{n}^\pm(s)
	\end{align*} 
	and
	\begin{align*}
		f_\mr{n}^+(s+\pi)&=f_\mr{n}^-(s),\\
		g_\mr{n}^+(s+\pi)&=g_\mr{n}^-(s)-a^2b^2 \int_0^\pi \frac{\sin^2(x)}{{\left(b^2+a^2\cos^2(x)\right)}^{\frac{3}{2}}} \,\d x.
	\end{align*} 
\end{remark}

\begin{figure}
	\centering
	\includegraphics[width=0.7\textwidth]{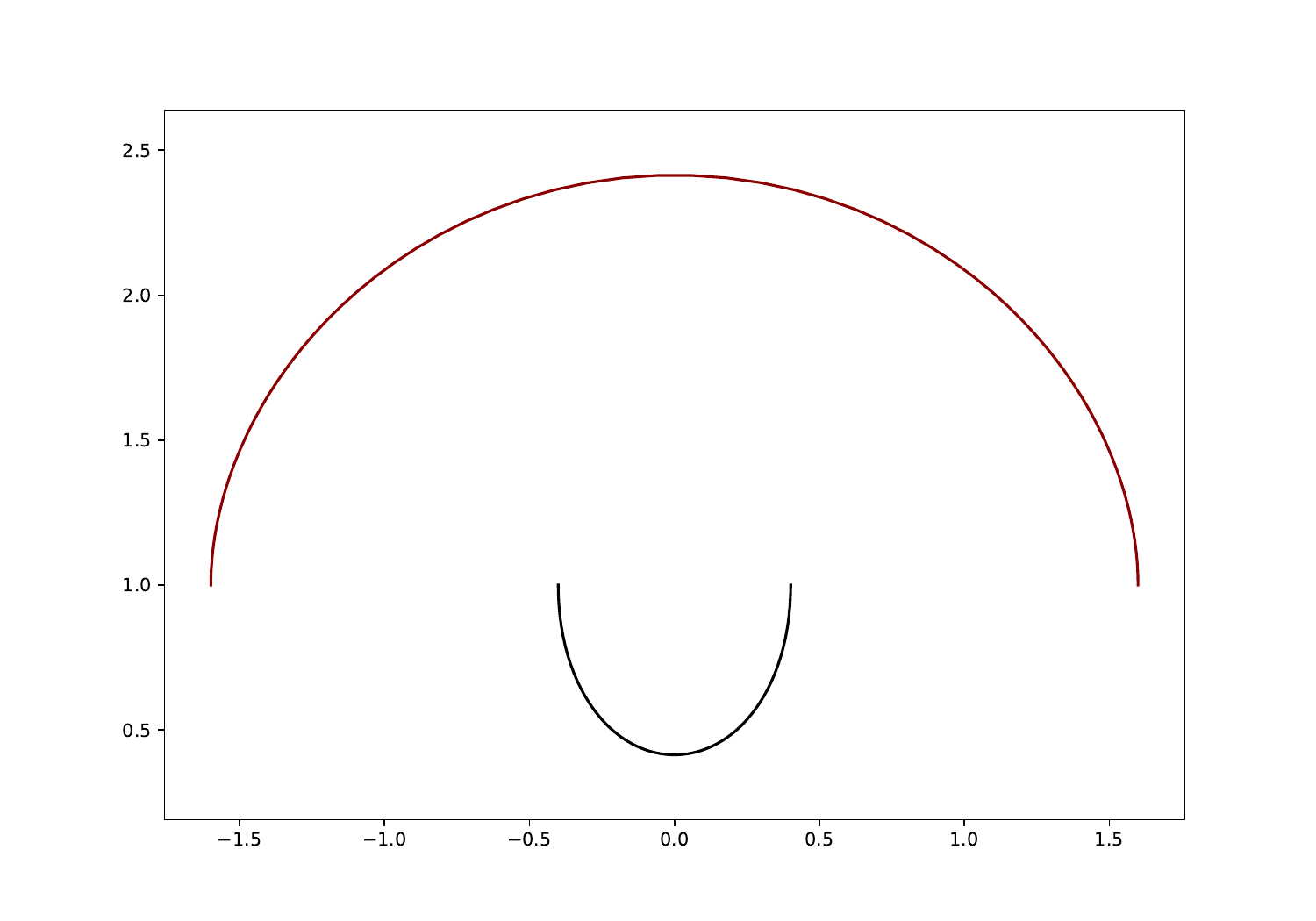}
	\caption{Roulette of the hyperbola at $F_+$ (black) and $F_-$ (red) with a=1, b=1}\label{fig:Nodary1}
\end{figure}

\begin{figure}
    \centering
    \includegraphics[width=1\textwidth]{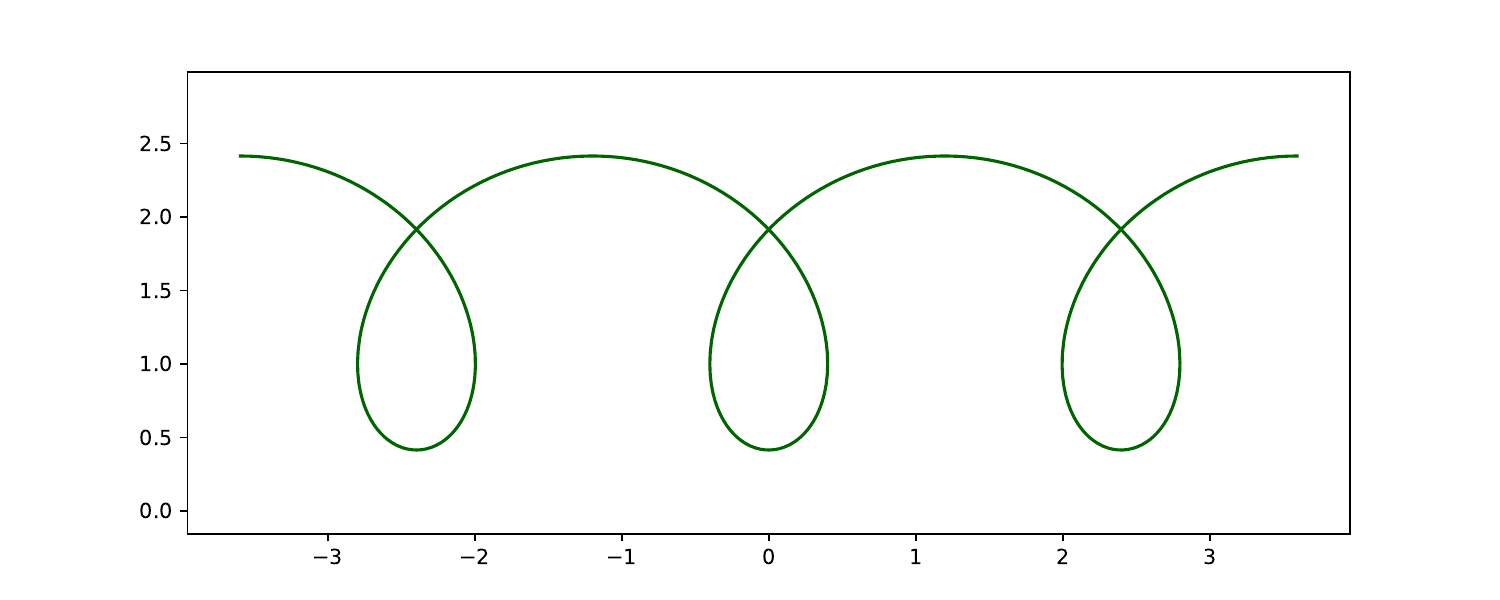}
    \caption{Nodary $(g_\mr{n}^+,f_\mr{n}^+)$ with $a=1$, $b=1$, and $s\in [-3\pi,3\pi]$} 
    \label{fig:Nodary2}
\end{figure}

\subsection{Derivatives of the roulettes}

\begin{proposition}\label{prp:undulary:derivatives}
	Let $a>b>0$, $c=\sqrt{a^2-b^2}$, and 
	\begin{equation}\label{eq:prp:undulary:derivatives}
		h^\pm(t)=\displaystyle\frac{ab}{(a\pm c \cos(t))\sqrt{a^2-c^2\cos^2(t)}}
	\end{equation}
	for $t\in \R$. Then, the components of the undulary given in \eqref{eq:undulary:g} and \eqref{eq:undulary:f} satisfy
	\begin{enumerate}\upshape
		\item \label{it:prp:undulary:derivatives:g} $\displaystyle {g^\pm_\mr{u}}'(t)= b h^\pm(t)$; 
		\item \label{it:prp:undulary:derivatives:f} $\displaystyle {f^\pm_\mr{u}}'(t)= \pm c h^\pm(t)\sin(t)$;
		\item \label{it:prp:undulary:derivatives:sign-g_pm} $g_\mr{u}^\pm(t)>0$ for $t>0$.
	\end{enumerate} 
\end{proposition}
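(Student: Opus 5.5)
Items (i) and (ii) follow by direct differentiation of \eqref{eq:undulary:g} and \eqref{eq:undulary:f}, and item (iii) then follows from (i). Throughout, write $W(t)=\sqrt{a^2-c^2\cos^2(t)}$ and $u=\cos(t)$, $v=\sin(t)$. Since $0<c<a$ we have $W\ge b>0$ and $a\pm cu>0$, so every expression is smooth in $t$ and $h^\pm>0$ on $\R$. Also note $W^2=(a-cu)(a+cu)$ and $W'=c^2uv/W$.

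\emph{Step 1: item (ii).} Differentiate $f_\mr{u}^\pm=b(a\mp cu)/W$ by the quotient rule:
\begin{equation*}
{f_\mr{u}^\pm}'=\frac{b}{W^3}\Bigl(\pm cv\,W^2-(a\mp cu)c^2uv\Bigr)=\frac{b\,cv\,(a\mp cu)}{W^3}\bigl(\pm(a\pm cu)-cu\bigr).
\end{equation*}
Here $W^2=(a\mp cu)(a\pm cu)$ has been factored. The last bracket equals $\pm a$ in both sign cases. Using $(a\mp cu)/W^3=1/\bigl((a\pm cu)W\bigr)$, this gives ${f_\mr{u}^\pm}'=\pm c\,v\,ab/\bigl((a\pm cu)W\bigr)=\pm c\,h^\pm\sin(t)$.

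\emph{Step 2: item (i).} The integral term contributes $W$. For the second term, set $P=v(a\mp cu)/W$ and compute
\begin{equation*}
P'=\frac{\bigl(u(a\mp cu)\pm cv^2\bigr)W^2-v^2(a\mp cu)c^2u}{W^3}.
\end{equation*}
Then ${g_\mr{u}^\pm}'=W\mp cP'$. The goal is to show this equals $ab^2/\bigl((a\pm cu)W\bigr)$. Multiply through by $(a\pm cu)W$ and substitute $v^2=1-u^2$, $W^2=a^2-c^2u^2$, $b^2=a^2-c^2$. Both sides become polynomials in $u$, and we verify the identity by expanding. This is the only computational step, and it is the main obstacle: it is routine but sign-sensitive. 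I would do the $+$ case explicitly. The $-$ case then follows by replacing $t$ with $t+\pi$ via Remark~\ref{rem:undulary}: the shift sends $u\mapsto -u$ and $h^+(t+\pi)=h^-(t)$, and since $g^+$ and $g^-$ differ by a constant, their derivatives agree after the shift.

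As a cross-check, there is a geometric shortcut. By Step 1, $\bigl({g'}\bigr)^2+\bigl({f'}\bigr)^2$ should equal the squared arclength speed of the rolling ellipse times the squared distance ratio of the focus. This would give ${g'}^2=b^2h^2$ up to sign. Since $W>0$ dominates near $t=0$, where $v=0$ and $W\mp cP'=b^2/(a\pm c)>0$, the correct sign would be fixed there. I would still present the algebraic verification as the proof.

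\emph{Step 3: item (iii).} By (i), ${g_\mr{u}^\pm}'=b\,h^\pm>0$ on all of $\R$. At $t=0$ the integral vanishes and $\sin 0=0$, so $g_\mr{u}^\pm(0)=0$. Therefore $g_\mr{u}^\pm$ is strictly increasing, and $g_\mr{u}^\pm(t)>0$ for $t>0$.
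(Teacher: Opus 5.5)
Your proposal is correct and follows the paper's proof. Items (i) and (ii) come from direct differentiation, which the paper only calls straightforward and which your computation carries out correctly, including the reduction of the $-$ case to the $+$ case via $t\mapsto t+\pi$ and Remark~\ref{rem:undulary}; item (iii) follows from ${g_\mr{u}^\pm}'=bh^\pm>0$ together with $g_\mr{u}^\pm(0)=0$. The only slip is in your optional cross-check, where in fact ${g_\mr{u}^\pm}'(0)=ab/(a\pm c)$ rather than $b^2/(a\pm c)$, and this does not affect the argument.
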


\begin{proof}
	Statements \eqref{it:prp:undulary:derivatives:g} and \eqref{it:prp:undulary:derivatives:f} are straight forward computations. Since $g^\pm(0)=0$ and $h^\pm>0$, we conclude that \eqref{it:prp:undulary:derivatives:g} implies \eqref{it:prp:undulary:derivatives:sign-g_pm}.
\end{proof}

\begin{proposition}\label{prp:nodary:derivatives}
    Let $a,b>0$, $c=\sqrt{a^2+b^2}$, and 
    \begin{equation*}
    	Q(s) = \sqrt{b^2 + a^2\cos^2(s)}, \quad \Lambda^\pm(s)=\frac{a \cos(s)}{Q(s)}\mp1	
    \end{equation*}
    for $s\in\R$. 
    Then,
    \begin{equation}\label{eq:prp:nodary:derivatives:sign-Lambda}
    	\Lambda^+<0,\qquad \Lambda^->0,
    \end{equation} 
    and the components of the nodary given in \eqref{eq:nodary:g} and \eqref{eq:nodary:f} satisfy
    \begin{enumerate}\upshape
        \item \label{it:prp:nodary:derivatives} $\displaystyle g_\mathrm{n}^\pm(s) = -a^2\int_0^s \frac{\cos^2(x)}{Q(x)} \,\d x \pm a\sin(s)$; 
        \item \label{it:prp:nodary:derivatives:g} $\displaystyle {g_\mathrm{n}^\pm}'(s) = -a\cos(s)\Lambda^\pm(s)$;
        \item \label{it:prp:nodary:derivatives:f} $\displaystyle {f_\mathrm{n}^\pm}'(s) = -a\sin(s)\Lambda^\pm(s)$.
    \end{enumerate}
\end{proposition}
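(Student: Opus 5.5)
Everything here is a direct computation from the explicit formulas \eqref{eq:nodary:g} and \eqref{eq:nodary:f}, so the plan is to verify each claim in turn. Throughout I use $Q(s)=\sqrt{b^2+a^2\cos^2(s)}$, which satisfies $Q'(s)=-a^2\sin(s)\cos(s)/Q(s)$.

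\textbf{Signs of $\Lambda^\pm$.} Since $b>0$, we have $Q(s)>a|\cos(s)|$ for every $s$. Hence $|a\cos(s)/Q(s)|<1$, and therefore $\Lambda^+=a\cos(s)/Q(s)-1<0$ and $\Lambda^-=a\cos(s)/Q(s)+1>0$. This proves \eqref{eq:prp:nodary:derivatives:sign-Lambda}.

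\textbf{Item \eqref{it:prp:nodary:derivatives}.} I would show that both sides vanish at $s=0$ and have the same derivative.
\begin{itemize}
\item Differentiating the boundary term gives
\begin{equation*}
\frac{d}{ds}\left(-\frac{a^2\sin(s)\cos(s)}{Q(s)}\right)=-\frac{a^2(\cos^2(s)-\sin^2(s))}{Q(s)}-\frac{a^4\sin^2(s)\cos^2(s)}{Q(s)^3}.
\end{equation*}
\item Add the derivative of the integral term, $-a^2b^2\sin^2(s)/Q(s)^3$.
\item Write $b^2=Q^2-a^2\cos^2(s)$. The terms $\pm a^4\sin^2(s)\cos^2(s)/Q(s)^3$ then cancel, and the sum collapses to
\begin{equation*}
-\frac{a^2\sin^2(s)}{Q(s)}-\frac{a^2(\cos^2(s)-\sin^2(s))}{Q(s)}=-\frac{a^2\cos^2(s)}{Q(s)}.
\end{equation*}
\end{itemize}
This is exactly the derivative of $-a^2\int_0^s\cos^2(x)/Q(x)\,\d x$. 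The terms $\pm a\sin(s)$ appear identically on both sides. Both sides vanish at $s=0$, so item \eqref{it:prp:nodary:derivatives} follows.

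\textbf{Item \eqref{it:prp:nodary:derivatives:g}.} Differentiating the formula just established gives
\begin{equation*}
{g_\mr{n}^\pm}'(s)=-\frac{a^2\cos^2(s)}{Q(s)}\pm a\cos(s)=-a\cos(s)\Bigl(\frac{a\cos(s)}{Q(s)}\mp1\Bigr)=-a\cos(s)\Lambda^\pm(s).
\end{equation*}

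\textbf{Item \eqref{it:prp:nodary:derivatives:f}.} Differentiating \eqref{eq:nodary:f} gives
\begin{equation*}
{f_\mr{n}^\pm}'(s)=-\frac{a^2\sin(s)\cos(s)}{Q(s)}\pm a\sin(s)=-a\sin(s)\Lambda^\pm(s).
\end{equation*}

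The only step requiring care is the cancellation in item \eqref{it:prp:nodary:derivatives}, where substituting $b^2=Q^2-a^2\cos^2$ is the key. Everything else is immediate.
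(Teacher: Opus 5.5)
Your proof is correct and follows the paper's argument: the paper's proof rests on the identity $\frac{\mathrm{d}}{\mathrm{d}s}\frac{a^2\sin(s)\cos(s)}{Q(s)}=-\frac{a^2b^2\sin^2(s)}{Q(s)^3}+\frac{a^2\cos^2(s)}{Q(s)}$ plus the fundamental theorem of calculus, which is your cancellation via $b^2=Q^2-a^2\cos^2(s)$. You also spell out the sign of $\Lambda^\pm$ from $Q(s)>a|\cos(s)|$, which the paper leaves implicit.
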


\begin{proof}
    One readily verifies
    \begin{align*}
        \frac{\d}{\d s} \frac{a^2\sin(s)\cos(s)}{Q(s)} = - \frac{a^2b^2\sin^2(s)}{{Q(s)}^3} + \frac{a^2\cos^2(s)}{{Q(s)}}
    \end{align*}
    which, by the fundamental theorem of calculus, implies \eqref{it:prp:nodary:derivatives}-\eqref{it:prp:nodary:derivatives:f}. 

\end{proof}

%% file: Contents/Delaunay.tex
\section{Delaunay surfaces}
In 1841, the French astronomer and mathematician \textsc{Delaunay}~\cite{Delaunay1841JMPA} studied surfaces of revolution with constant mean curvature. He showed that apart from the plane and the sphere, the generating curves of these surfaces are the roulettes of the foci of the non-degenerate conic sections. The surfaces created by revolving the roulette of the foci of the circle, parabola, ellipse, and hyperbola are the cylinder, catenoid, unduloid, and nodoid, respectively. These surfaces are known as \emph{Delaunay surfaces}. 

Since then, various approaches to parametrizing Delaunay surfaces have been introduced. A list of some of the known parametrizations can be found in the appendix of~\cite{MladenovaMladenov24Mathematics}.

\subsection{Surfaces of revolution}
Given two real valued functions $f,g$ defined on an interval $(t_1,t_2)$ with $-\infty\le t_1<t_2\le \infty$ and $f>0$, the surface of revolution corresponding to the profile curve $\gamma = (g,f)$ is parametrized by
\begin{equation*}
	\boldsymbol x:(t_1,t_2)\times[0,2\pi)\to \mathbb R^3,\quad \boldsymbol x(t,v) = \bigl(f(t)\cos v,f(t)\sin v,g(t)\bigr).
\end{equation*}
The coefficients of its first and second fundamental form are formally given by
\begin{align*}
	E &= {(f')}^2 + {(g')}^2, & F &= 0, & G &= f^2\\
	L &= \|\gamma'\|^{-1} \left(f'g''-f''g'\right), & M &= 0, & N &= \|\gamma'\|^{-1}fg'.
\end{align*}
The corresponding unit normal and resulting mean curvature are
\begin{equation*}
	\boldsymbol n(\cdot,v) = \|\gamma'\|^{-1}(-g'\cos v,-g'\sin v, f'),\quad  H = \frac{1}{2} \Bigl( \frac{f'g''-f''g'}{{\|\gamma'\|}^{3}} + \frac{g'}{f \|\gamma'\|} \Bigr).
\end{equation*}

\subsection{Catenoids}
A surface of revolution has constant vanishing mean curvature $H=0$ if and only if it is either contained in a plane, or its generating curve is a piece of a catenary $(g_\mathrm{c},f_\mathrm{c})$ as defined in Proposition \ref{prp:catenary} for some parameter $b>0$.

\subsection{Unduloids}
A surface of revolution has constant positive mean curvature if and only if it is either contained in a cylinder, or it is contained in a round sphere, or its generating curve is a piece of an undulary $(g_\mathrm{u},f_\mathrm{u})$ as defined in Remark~\ref{rem:undulary} for some parameters $a>b>0$ with $H=\frac{1}{2a}$.

\subsection{Nodoids}\label{sec:Delaunay:nodoid}
A surface of revolution has constant negative mean curvature if and only if its generating curve is a piece of a nodary $(g_\mathrm{n},f_\mathrm{n})$ as defined in Remark~\ref{rem:nodary} for some parameters $a,b>0$ with $H=-\frac{1}{2a}$.

%% file: Contents/intersections.tex
\section{Orthogonal intersections of Delaunay surfaces with the unit sphere}

In this section, we will analyze orthogonal intersections of Delaunay surfaces with the unit sphere. Since Delaunay surfaces just like the unit sphere are surfaces of revolution, we can reduce the problem to analyzing intersections of the generating curves with the unit circle. That is, we aim to determine all generating curves $(g,f)$ of Delaunay surfaces that are symmetric with respect to the $y$-axis for which there exists $t>0$ in the domain of $(g,f)$ solving the system
\begin{align}\label{it:circle} \tag{I}
	1&={g(t)}^2 + {f(t)}^2, \\ \label{it:tangent} \tag{II}
	0&=g'(t)f(t) - g(t)f'(t).
\end{align}
The first equation means that $(g,f)$ meets the unit circle at $t$, while the second guarantees orthogonality at the point of intersection.

\subsection{Free boundary catenoid} \label{sec:fb_catenoid}
For the catenary 
\begin{equation*}
	(g_\mathrm{c}(t),f_\mathrm{c}(t))=(bt,b\cosh t)
\end{equation*}
defined in Proposition \ref{prp:catenary}, the system \eqref{it:circle}-\eqref{it:tangent} reads
\begin{equation*}
	t=\coth t,\qquad 	b=\frac{1}{\sqrt{t^2+\cosh^2t}}
\end{equation*}
which has a unique solution $(t^*,b^*)$. A numerical analysis gives $t^*\approx1.199678$ and $b^*\approx0.460485$.

\subsection{Free boundary unduloids}
Given $a>0$ and $k\in(0,1)$, we let $b=a\sqrt{1-k^2}$ and denote by 
\begin{equation}\label{eq:U}
	U^\pm(a,k)\vcentcolon=(g_\mathrm{u}^\pm,f_\mathrm{u}^\pm)
\end{equation}
the two undularies corresponding to the parameters $a,b$ that are symmetric with respect to the $y$-axis, see \eqref{eq:undulary:g}-\eqref{eq:undulary:f}. 

\begin{proposition}\label{prp:undulary:system}
	For the undulary $U^\pm(a,k)$, the system \eqref{it:circle}-\eqref{it:tangent} is equivalent to
	\begin{equation*}
		 g_\mathrm{u}^\pm(t)=\frac{\sqrt{1-k^2}}{\sqrt{1-k^2\cos^2(t)}}, \qquad 
		 f_\mathrm{u}^\pm(t)=\pm\frac{k\sin(t)}{\sqrt{1-k^2\cos^2(t)}}.
	\end{equation*}
\end{proposition}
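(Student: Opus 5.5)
Condition \eqref{it:tangent} says that the tangent vector $(g',f')$ at $t$ is parallel to the position vector $(g,f)$. Together with \eqref{it:circle}, this means $(g,f)$ equals $\pm$ the unit tangent. So the plan is to use Proposition~\ref{prp:undulary:derivatives} to compute the unit tangent explicitly, and then fix the sign.

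\textbf{Step 1 (unit tangent).} By Proposition~\ref{prp:undulary:derivatives}\eqref{it:prp:undulary:derivatives:g}--\eqref{it:prp:undulary:derivatives:f} we have $(g',f')=h^\pm(t)\,(b,\pm c\sin t)$ with $h^\pm>0$. Hence the unit tangent is
\[
\frac{(b,\pm c\sin t)}{\sqrt{b^2+c^2\sin^2 t}}.
\]
Next substitute $b=a\sqrt{1-k^2}$ and $c=\sqrt{a^2-b^2}=ak$. This gives $b^2+c^2\sin^2t=a^2(1-k^2\cos^2 t)$, so the unit tangent becomes
\[
\Bigl(\tfrac{\sqrt{1-k^2}}{\sqrt{1-k^2\cos^2 t}},\ \pm\tfrac{k\sin t}{\sqrt{1-k^2\cos^2 t}}\Bigr),
\]
which is exactly the right-hand side of the claimed system.

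\textbf{Step 2 (equivalence).} Suppose \eqref{it:circle} and \eqref{it:tangent} hold. Since $(g',f')\neq 0$, \eqref{it:tangent} makes the vectors $(g,f)$ and $(g',f')$ linearly dependent. By \eqref{it:circle}, $(g,f)$ is then a unit vector parallel to the tangent, so it equals $\varepsilon$ times the unit tangent for some $\varepsilon\in\{\pm1\}$. To fix the sign, compare first components. We have $f>0$ (the profile curve lies in the upper half-plane, by \eqref{eq:undulary:f}), while the first component of the unit tangent is $b/\sqrt{\cdots}>0$. Since $f=\varepsilon\cdot(\pm k\sin t)/\sqrt{\cdots}$ must be positive, one could fix $\varepsilon$ from the second component. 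It is cleaner, however, to use the first component $g(t)=\varepsilon\sqrt{1-k^2}/\sqrt{\cdots}$: for $t>0$, Proposition~\ref{prp:undulary:derivatives}\eqref{it:prp:undulary:derivatives:sign-g_pm} gives $g(t)>0$, which forces $\varepsilon=+1$.

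Conversely, if the displayed equalities hold, then $(g,f)$ is a unit vector, so \eqref{it:circle} holds. It is also proportional to $(g',f')$, so $g'f-gf'=0$ and \eqref{it:tangent} holds.

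\textbf{Main obstacle.} The only delicate point is the sign determination. It relies on restricting to $t>0$ as in the setup of the system, since for $t\le 0$ the sign argument via Proposition~\ref{prp:undulary:derivatives}\eqref{it:prp:undulary:derivatives:sign-g_pm} is unavailable. Everything else is a direct algebraic computation.
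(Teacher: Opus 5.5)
Your proposal is correct and is the paper's argument written out in full: the paper likewise combines $g'=bh^\pm$ and $f'=\pm c\,h^\pm\sin t$ (with $h^\pm>0$) from Proposition~\ref{prp:undulary:derivatives} to identify $(g,f)$ with $\pm$ the unit tangent, and then fixes the sign using $g^\pm_{\mathrm u}(t)>0$ for $t>0$. Your aside about fixing $\varepsilon$ from the second component would not work on its own, since the sign of $\sin t$ is not known a priori, but you rightly do not rely on it.
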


\begin{proof}
	By the definition in \eqref{eq:prp:undulary:derivatives}, we have $h^\pm>0$. Thus, the statement follows from \eqref{it:prp:undulary:derivatives:g}-\eqref{it:prp:undulary:derivatives:sign-g_pm} of Proposition \ref{prp:undulary:derivatives}.	
\end{proof}

\begin{proposition}[Characterizing orthogonal intersections of undulary and circle]\label{prp:undulary:intersection}
	Given $0<k<1$, $t>0$, and letting
	\begin{equation*}
		I(k,t)\vcentcolon=\int_0^t\sqrt{1-k^2\cos^2(x)}\,\mr dx,
	\end{equation*}
	there exists $a>0$ such that $U^\pm(a,k)$ intersects the unit circle orthogonally at $t$ if and only if 
	\begin{equation}\label{eq:prp:undulary:intersection:sign}
		\pm\sin(t)>0
	\end{equation}
	and
	\begin{equation}\label{eq:prp:undulary:intersection:F}
		0=F^\pm(k,t) \vcentcolon= k\sin(t)I(k,t)\mp(1\mp k\cos(t))\sqrt{1-k^2\cos^2(t)}.
	\end{equation}
	In this case, $a$ is uniquely given by
	\begin{equation}\label{eq:prp:undulary:intersection:a}
		a^\pm = \pm\frac{k}{\sqrt{1-k^2}}\frac{\sin(t)}{1\mp k\cos(t)}.
	\end{equation}
\end{proposition}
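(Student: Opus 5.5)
Starting from Proposition~\ref{prp:undulary:system}, the plan is to substitute the explicit formulas \eqref{eq:undulary:g}--\eqref{eq:undulary:f} for $U^\pm(a,k)$ and then solve for $a$. We write $c=ak$ and $b=a\sqrt{1-k^2}$, so that $a^2-c^2\cos^2(t)=a^2(1-k^2\cos^2(t))$. Set $D=\sqrt{1-k^2\cos^2(t)}$. Since everything is homogeneous in $a$, the two equations of Proposition~\ref{prp:undulary:system} become
\begin{equation*}
	aI(k,t)\mp\frac{ak\sin(t)(1\mp k\cos(t))}{D}=\frac{\sqrt{1-k^2}}{D},\qquad \frac{a\sqrt{1-k^2}(1\mp k\cos(t))}{D}=\pm\frac{k\sin(t)}{D}.
\end{equation*}
Indeed $g_\mathrm{u}^\pm(t)=a\bigl(I(k,t)\mp k\sin(t)(1\mp k\cos(t))/D\bigr)$ and $f_\mathrm{u}^\pm(t)=a\sqrt{1-k^2}(1\mp k\cos(t))/D$.

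The second equation is linear in $a$. Because $1\mp k\cos(t)>0$ for $k\in(0,1)$, it has the unique solution \eqref{eq:prp:undulary:intersection:a}. This solution is positive exactly when $\pm\sin(t)>0$, which gives \eqref{eq:prp:undulary:intersection:sign}. Conversely, whenever \eqref{eq:prp:undulary:intersection:sign} holds, $a^\pm$ defined by \eqref{eq:prp:undulary:intersection:a} is a legitimate positive parameter.

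Next, substitute $a^\pm$ into the first equation and multiply through by $D\sqrt{1-k^2}(1\mp k\cos(t))$, which is nonzero. We get
\begin{equation*}
	\pm k\sin(t)\bigl(I(k,t)D\mp k\sin(t)(1\mp k\cos(t))\bigr)=(1-k^2)(1\mp k\cos(t)).
\end{equation*}
The key simplification is the identity
\begin{equation*}
	k^2\sin^2(t)+(1-k^2)=1-k^2\cos^2(t)=D^2,
\end{equation*}
which combines the two terms containing $(1\mp k\cos(t))$ and leaves
\begin{equation*}
	\pm k\sin(t)I(k,t)D=(1\mp k\cos(t))D^2.
\end{equation*}
Dividing by $D>0$ and multiplying by $\pm1$ gives exactly $F^\pm(k,t)=0$. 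Every step is reversible, so the two directions of the equivalence and the uniqueness of $a$ follow together.

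The main obstacle is purely bookkeeping of the $\pm/\mp$ signs when multiplying through. I would first check the computation in the $+$ case and then obtain the $-$ case by flipping signs consistently. As a cross-check, the relation $f_\mathrm{u}^+(t+\pi)=f_\mathrm{u}^-(t)$ from Remark~\ref{rem:undulary} should be consistent with the sign condition.
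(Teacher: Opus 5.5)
Your proposal is correct and follows the same route as the paper. The paper's proof says exactly that the second equation of Proposition~\ref{prp:undulary:system} is linear in $a$ and equivalent to \eqref{eq:prp:undulary:intersection:a}, and that substituting this into the first equation yields $F^\pm(k,t)=0$. You carry out the computation the paper leaves implicit, including the simplification via $k^2\sin^2(t)+1-k^2=1-k^2\cos^2(t)$ and the observation that positivity of $a^\pm$ is equivalent to $\pm\sin(t)>0$.
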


\begin{proof}
	Equation \eqref{eq:prp:undulary:intersection:a} is equivalent to the second equation of Proposition~\ref{prp:undulary:system} by means of which the first equation of Proposition~\ref{prp:undulary:system} is equivalent to $F(k,t)=0$. 
\end{proof}

\begin{definition}\label{def:elliptic_integral}
	The \emph{complete elliptic integrals} of the \emph{first} and \emph{second kind} are
	\begin{equation*}
		K(k)=\int_0^\frac{\pi}{2}\frac{\mr dx}{\sqrt{1-k^2\sin^2(x)}},\qquad E(k)\vcentcolon=\int_0^\frac{\pi}{2}\sqrt{1-k^2\sin^2(x)}\mr dx
	\end{equation*}
	where $0\le k\le 1$.
\end{definition}

\begin{remark}\label{rem:def:elliptic_integral}
	The derivatives are
	\begin{equation}\label{eq:elliptic_integral:derivative}
		K'(k)=\frac{E(k)}{k(1-k^2)}-\frac{K(k)}{k},\qquad E'(k)=\frac{E(k)-K(k)}{k}
	\end{equation}	
	see e.g.\ \cite[710.00, 710.01]{ByrdFriedman1971Springer}. Moreover,
	\begin{equation*}
		K\ge \pi/2,\qquad 1\le E\le \pi/2
	\end{equation*}	
	by \cite[900.00, 900.07]{ByrdFriedman1971Springer}, and
	\begin{equation*}
		\lim_{k\nearrow1}\Bigl(K(k)-\log(4/\sqrt{1-k^2})\Bigr)=0
	\end{equation*}	
	by \cite[112.01]{ByrdFriedman1971Springer}. In particular,
	\begin{equation*}
		\lim_{k\nearrow1}K(k)=\infty
	\end{equation*}
 	and
	\begin{equation*}
		K'(k)>0,\quad E'(k)<0\qquad\text{for $0<k\le1$}.
	\end{equation*}
\end{remark}

\begin{lemma}\label{lem:unduloid}
	Let $F^\pm,I$ be as in Proposition \ref{prp:undulary:intersection}. Define
	\begin{equation}\label{eq:lem:unduloid:h}
		h^\pm_l(k)\vcentcolon=F^\pm\Bigl(k,l\pi+\frac{\pi}{2}\Bigr)=k(-1)^l(2l+1)E(k)\mp1 
	\end{equation}
	for all nonnegative integers $l$ and $0<k<1$, as well as
	\begin{equation*}
		H^\pm(k,t)\vcentcolon=kI(k,t)\mp (1\mp k\cos(t))\frac{k^2\sin(t)}{\sqrt{1 - k^2\cos^2(t)}}.
	\end{equation*}
	for all $0<k<1$ and $t>0$. Then, there holds
	\begin{align}\label{eq:lem:unduloid:h-prime}
		{h_l^\pm}'(k)&=(-1)^l(2l+1)(2E(k)-K(k)),\\ \label{eq:lem:unduloid:h-pprime}
		{h_l^\pm}''(k)&=(-1)^l(2l+1)\Bigl(\frac{2E(k)-K(k)}{k}-\frac{E(k)}{k(1-k^2)}\Bigr),
	\end{align}
	as well as
	\begin{equation}\label{eq:lem:unduloid:partialF}
		\partial_tF^\pm(k,t)=\cos(t)H^\pm(k,t),\qquad H^\pm(k,t)>0.
	\end{equation}
	Moreover, there exists a unique $\kappa_0\in[0,1]$ such that 
	\begin{equation}\label{eq:lem:unduloid:2E=K}
		2E(\kappa_0)-K(\kappa_0)=0.
	\end{equation}
	It holds $\kappa_0\in(0,1)$.
\end{lemma}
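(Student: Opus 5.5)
The plan is to verify the four claims separately: the closed form of $h_l^\pm$, its derivatives, the factorization of $\partial_tF^\pm$ together with the positivity of $H^\pm$, and the existence and uniqueness of $\kappa_0$.

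\emph{Formula for $h_l^\pm$.} At $t=l\pi+\frac{\pi}{2}$ we have $\cos(t)=0$ and $\sin(t)=(-1)^l$. The integrand $\sqrt{1-k^2\cos^2(x)}$ is $\pi$-periodic and symmetric about $\frac{\pi}{2}$. The substitution $x\mapsto\frac{\pi}{2}-x$ turns its integral over $[0,\frac{\pi}{2}]$ into $E(k)$. Hence $I(k,l\pi+\frac{\pi}{2})=(2l+1)E(k)$, and inserting this into \eqref{eq:prp:undulary:intersection:F} gives \eqref{eq:lem:unduloid:h}.

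\emph{Derivatives of $h_l^\pm$.} By \eqref{eq:elliptic_integral:derivative},
\begin{equation*}
	\frac{\d}{\d k}\bigl(kE(k)\bigr)=E(k)+kE'(k)=2E(k)-K(k),
\end{equation*}
which gives \eqref{eq:lem:unduloid:h-prime}. Differentiating once more and using \eqref{eq:elliptic_integral:derivative} again,
\begin{equation*}
	2E'(k)-K'(k)=\frac{2E(k)-2K(k)}{k}-\frac{E(k)}{k(1-k^2)}+\frac{K(k)}{k},
\end{equation*}
which is \eqref{eq:lem:unduloid:h-pprime}.

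\emph{The identity for $\partial_tF^\pm$.} We differentiate the two summands of $F^\pm$ in $t$, writing $S(t)=\sqrt{1-k^2\cos^2(t)}$. The first summand gives $k\cos(t)I+k\sin(t)S$. In the second, the derivative of $(1\mp k\cos(t))S$ is $\pm k\sin(t)S+(1\mp k\cos(t))k^2\cos(t)\sin(t)/S$. After applying the prefactor $\mp$, the two terms $k\sin(t)S$ cancel, since $\mp(\pm1)=-1$. Everything that remains carries a factor $\cos(t)$, and what is left after factoring it out is exactly $H^\pm$.

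\emph{Positivity of $H^\pm$.} Since $I(k,0)=0$ and $\sin(0)=0$, we have $H^\pm(k,0)=0$. It therefore suffices to show $\partial_tH^\pm(k,t)>0$ for $t>0$. We compute
\begin{equation*}
	\partial_tH^\pm=kS\mp k^2\frac{\d}{\d t}\Bigl(\frac{(1\mp k\cos(t))\sin(t)}{S}\Bigr)
\end{equation*}
and multiply by $S^3/k$. This yields a polynomial in $\cos(t)$ (using $\sin^2=1-\cos^2$) with coefficients depending on $k$. The aim is to factor this polynomial into manifestly nonnegative factors, such as $1-k^2$, $1\mp k\cos(t)$ and squares, that vanish at most at isolated points. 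As a sanity check, for $H^+$ at $t=0$ the value is $(1-k^2)(1-k)>0$.

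I expect this algebraic simplification to be the main obstacle. If a clean factorization does not appear, I would bound the expression from below using $|\cos(t)|\le1$ and $0<k<1$, splitting according to the sign of $\cos(t)$. An alternative route is to use the rewriting
\begin{equation*}
	\frac{1\mp k\cos(t)}{S}=\sqrt{\frac{1\mp k\cos(t)}{1\pm k\cos(t)}}.
\end{equation*}

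\emph{Existence and uniqueness of $\kappa_0$.} Let $\phi(k)=2E(k)-K(k)$ on $[0,1)$. Then $\phi(0)=2\cdot\frac{\pi}{2}-\frac{\pi}{2}=\frac{\pi}{2}>0$. By Remark~\ref{rem:def:elliptic_integral}, $E\le\frac{\pi}{2}$ while $K(k)\to\infty$ as $k\nearrow1$, so $\phi(k)\to-\infty$. Moreover $\phi'=2E'-K'<0$ on $(0,1)$, because $E'<0$ and $K'>0$. Continuity and strict monotonicity then give exactly one zero of $\phi$. The value at $k=1$ is infinite, so there is no zero there, and $\phi(0)\neq0$ excludes $k=0$. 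Hence $\kappa_0\in(0,1)$.
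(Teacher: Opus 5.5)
Your proposal is correct and follows the paper's proof. You use the same closed form and derivatives of $h_l^\pm$ via \eqref{eq:elliptic_integral:derivative}, the factorization $\partial_tF^\pm=\cos(t)H^\pm$, positivity of $H^\pm$ from $H^\pm(k,0)=0$ together with $\partial_tH^\pm>0$, and strict monotonicity of $2E-K$. The computation you flagged as the main obstacle, and left unfinished, factors exactly as you hoped: $(1-k^2\cos^2(t))^{3/2}\,\partial_tH^\pm(k,t)=k(1-k^2)(1\mp k\cos(t))$, that is, $\partial_tH^\pm(k,t)=\frac{k(1-k^2)}{(1\pm k\cos(t))\sqrt{1-k^2\cos^2(t)}}>0$, which is the formula the paper records.
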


\begin{proof}
	Using \eqref{eq:elliptic_integral:derivative}, one readily verifies \eqref{eq:lem:unduloid:h-prime} and \eqref{eq:lem:unduloid:h-pprime}. Moreover, by a straight forward computation, we have $\partial_tF^\pm(k,t)=\cos(t)H^\pm(k,t)$ and
	\begin{equation*}
		\partial_tH(k,t)=\frac{k(1-k^2)}{(1\pm k\cos(t))\sqrt{1-k^2\cos^2(t)}}
	\end{equation*}
	which is strictly positive for $0<k<1$. Since $H(k,0)=0$, we conclude $H(k,t)>0$ for $t>0$. The last statement follows from Remark \ref{rem:def:elliptic_integral}.
\end{proof}

\begin{theorem}[Symmetric free boundary unduloids in the unit ball] \label{thm:unuloid}
	Every undulary symmetric with respect to the $y$-axis that meets the unit circle orthogonally falls into one of the following two cases. Recall that by Proposition \ref{prp:undulary:intersection}, such undularies are characterized by the zeros of $F^\pm$ and let $\kappa_0\in(0,1)$ be the unique number with $2E(\kappa_0)=K(\kappa_0)$ according to \eqref{eq:lem:unduloid:2E=K}.
	\begin{enumerate} \upshape
		\item Let $l$ be a nonnegative even integer. Then, there exists a unique $k_l\in(0,\kappa_0)$ such that $F^+(k_l,l\pi + \frac{\pi}{2})=0$. Moreover, for all $k\in(k_l,1)$ there exist unique $t_1\in (l\pi,l\pi+\frac{\pi}{2})$ and $t_2\in(l\pi+\frac{\pi}{2},(l+1)\pi)$ with $F^+(k,t_i)=0$ for $i=1,2$.
		\item Let $l$ be a positive odd integer. Then, there exists a unique $k_l\in(0,\kappa_0)$ such that $F^-(k_l,l\pi + \frac{\pi}{2})=0$. Moreover, for all $k\in(k_l,1)$ there exist unique $t_1\in (l\pi,l\pi+\frac{\pi}{2})$ and $t_2\in(l\pi+\frac{\pi}{2},(l+1)\pi)$ with $F^-(k,t_i)=0$ for $i=1,2$.
	\end{enumerate}
\end{theorem}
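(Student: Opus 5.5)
The plan is to reduce everything to the sign of the one-variable function $h_l^\pm$ from Lemma~\ref{lem:unduloid}, using that $F^\pm(k,\cdot)$ is piecewise monotone. By Proposition~\ref{prp:undulary:intersection}, a symmetric undulary meets the circle orthogonally at $t>0$ exactly when $F^\pm(k,t)=0$ and $\pm\sin(t)>0$. The sign condition splits the admissible $t$ as follows:
\begin{itemize}
\item for $F^+$, we need $t\in(l\pi,(l+1)\pi)$ with $l\ge0$ even;
\item for $F^-$, we need $t\in(l\pi,(l+1)\pi)$ with $l$ odd (and $l\ge1$, since $t>0$).
\end{itemize}
This gives the case distinction of the theorem. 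It then suffices to count the zeros of $F^\pm(k,\cdot)$ on each such interval.

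\textbf{Step 1: endpoint values.} Since $I(k,l\pi)$ is multiplied by $\sin(l\pi)=0$, we get
\[
F^\pm(k,l\pi)=\mp\bigl(1\mp k\cos(l\pi)\bigr)\sqrt{1-k^2}.
\]
For $F^+$ with $l$ even, the endpoint values are $F^+(k,l\pi)=-(1-k)\sqrt{1-k^2}<0$ and $F^+(k,(l+1)\pi)=-(1+k)\sqrt{1-k^2}<0$.
For $F^-$ with $l$ odd, the endpoint values are $(1-k)\sqrt{1-k^2}>0$ and $(1+k)\sqrt{1-k^2}>0$.

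\textbf{Step 2: monotonicity.} By \eqref{eq:lem:unduloid:partialF}, $\partial_tF^\pm=\cos(t)H^\pm$ with $H^\pm>0$. Hence:
\begin{itemize}
\item for $l$ even, $F^+(k,\cdot)$ is strictly increasing on $(l\pi,l\pi+\frac\pi2)$ and strictly decreasing on $(l\pi+\frac\pi2,(l+1)\pi)$, with maximum $h_l^+(k)$;
\item for $l$ odd, $F^-(k,\cdot)$ is strictly decreasing and then strictly increasing, with minimum $h_l^-(k)$.
\end{itemize}
Combined with Step 1 and the intermediate value theorem, there is exactly one zero in each open half-interval iff $h_l^+(k)>0$ (respectively $h_l^-(k)<0$). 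There is a single zero at $l\pi+\frac\pi2$ iff equality holds, and there are no zeros otherwise.

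\textbf{Step 3: reduction to one function.} By \eqref{eq:lem:unduloid:h}, both conditions read $\varphi_l(k)>0$ with
\[
\varphi_l(k)=(2l+1)kE(k)-1.
\]
For $l$ even this is $h_l^+$, and for $l$ odd it is $-h_l^-$.

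\textbf{Step 4: sign analysis of $\varphi_l$.} This is the main point. By \eqref{eq:lem:unduloid:h-prime}, $\varphi_l'=(2l+1)(2E-K)$. At $k=0$ we have $2E-K=\pi/2>0$, and $2E-K$ is strictly decreasing because $E'<0<K'$ (Remark~\ref{rem:def:elliptic_integral}). Together with the uniqueness of $\kappa_0$ from Lemma~\ref{lem:unduloid}, this shows $2E-K>0$ on $(0,\kappa_0)$ and $2E-K<0$ on $(\kappa_0,1)$. So $\varphi_l$ increases strictly on $[0,\kappa_0]$ and decreases strictly on $[\kappa_0,1]$.

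The boundary values are $\varphi_l(0)=-1$ and $\varphi_l(1)=(2l+1)E(1)-1=2l\ge0$, using $E(1)=1$ (a direct computation). The delicate case is $l=0$, where $\varphi_0(1)=0$ and one relies on the strict decrease on $(\kappa_0,1)$ to get $\varphi_0>0$ there.

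In all cases $\varphi_l(\kappa_0)>\varphi_l(1)\ge0$, so $\varphi_l$ has a unique zero $k_l\in(0,\kappa_0)$ and $\varphi_l>0$ on $(k_l,1)$. Moreover $\varphi_l<0$ on $(0,k_l)$, so there are no free boundary undularies in that range.

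\textbf{Conclusion.} Inserting Step 4 into Step 2 gives both items of the theorem: the uniqueness of $k_l$ with $F^\pm(k_l,l\pi+\frac\pi2)=0$, and, for $k\in(k_l,1)$, unique $t_1\in(l\pi,l\pi+\frac\pi2)$ and $t_2\in(l\pi+\frac\pi2,(l+1)\pi)$ with $F^\pm(k,t_i)=0$. The assertion that every such undulary falls into one of these cases follows from the sign-condition splitting and the exhaustive zero count above.
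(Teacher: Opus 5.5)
Your proposal is correct and follows essentially the same route as the paper: negative (resp.\ positive) endpoint values $F^\pm(k,l\pi)$, the factorization $\partial_tF^\pm=\cos(t)H^\pm$ with $H^\pm>0$, and a sign analysis of $h_l^\pm$ using its boundary values at $k=0,1$ and its unique critical point $\kappa_0$. The only minor difference is that you get the sign of $2E-K$ on either side of $\kappa_0$ directly from $E'<0<K'$, whereas the paper uses the second derivative \eqref{eq:lem:unduloid:h-pprime}; your version also handles the borderline case $l=0$, where $h_0^+(1)=0$, a bit more explicitly.
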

\begin{proof}
	By \eqref{eq:lem:unduloid:partialF}, the critical points of $F(k,\cdot)$ on $(0,\infty)$ are precisely the zeros of cosine and every critical point is a strict extrema. Moreover, for any nonnegative integer $l$, 
	\begin{align*}
		F^+(k,l\pi)&=-(1-k(-1)^l)\sqrt{1-k^2}<0,\\
		F^-(k,l\pi)&=(1+(-1)^lk)\sqrt{1-k^2}>0. 
	\end{align*}
	In view of \eqref{eq:prp:undulary:intersection:sign}, it will thus be enough to analyze $h^\pm$. We have
	\begin{equation*}
		h_l^+(0)=-1<0,\quad h_l^+(1)\ge0\qquad\text{for $l$ even}
	\end{equation*} 
	and 
	\begin{equation*}
		h_l^-(0)=1>0,\quad h_l^-(1)<0\qquad\text{for $l$ odd}.
	\end{equation*} 
	By \eqref{eq:lem:unduloid:2E=K}, ${h_l^\pm}'$ has a unique zero $\kappa_0\in(0,1)$ and \eqref{eq:lem:unduloid:h-pprime} implies
	\begin{equation*}
		{h_l^\pm}''(\kappa_0)=(-1)^{l+1}(2l+1)\frac{E(\kappa_0)}{\kappa_0(1-\kappa_0^2)}.
	\end{equation*}
	Thus, $\kappa_0$ is a strict maximum for $h^+_l$ if $l$ is even and a strict minimum for $h^-_l$ if $l$ is odd. This implies the existence of a unique $k_l\in(0,\kappa_0)$ with $h_l^+(k_l)=0$ if $l$ is even and $h_l^-(k_l)=0$ if $l$ is odd. Now, the conclusion follows.
\end{proof}

\begin{definition}
Let $l \geq 0$. Given $k \in (k_l, 1)$, let $t_1(k) \in (l\pi, l\pi+\frac{\pi}{2})$ and $t_2(k) \in (l\pi+\frac{\pi}{2},(l+1)\pi)$ be the unique numbers with $F^{\pm}(k, t_i(k))=0$ given by Theorem~\ref{thm:unuloid}, where we consider + if $l$ is a nonnegative even integer and - if $l$ is a positive odd integer. Define
\begin{equation}\label{def:tau:1}
	\tau_1(k) := t_1(k) - l\pi \in \left(0, \frac{\pi}{2}\right)
\end{equation}
and
\begin{equation}\label{def:tau:2}
	\tau_2(k) := (l+1)\pi - t_2(k)  \in \left(0, \frac{\pi}{2}\right).
\end{equation}
Let $\sigma_i := {(-1)}^{i+1}$.
Then, $a^\pm(k,t_i(k))$ and $F^\pm(k,t_i(k)) = 0$ simplify to
\begin{equation}\label{def:a_tau}
a_i(k, \tau_i(k)) := a^{+}(k, \tau_i(k)) = \frac{k \sin \tau_i(k)}{\sqrt{1-k^2}(1 - \sigma_i k \cos \tau_i(k))} = a^{\pm}(k, t_i(k)),
\end{equation}
\begin{equation}\label{def:F_tau}
0 = F_i(k, \tau_i(k)),
\end{equation}
where 
\begin{align*}
	F_i(k,t):=k \sin(t) I_i(k, t) - (1-\sigma_i k \cos t) \sqrt{1-k^2 \cos^2 t},\\
	I_1(k, t) := I(k, t+l\pi), \quad  I_2(k, t) :=I(k, (l+1)\pi-t).
\end{align*}
Note that by~\eqref{def:F_tau},
\begin{equation}\label{def:a_tau:simplified}
	a_i(k, \tau_i(k)) = \frac{\sqrt{1-k^2 \cos^2(\tau_i(k))}}{\sqrt{1-k^2}I_i(k, \tau_i(k))}.
\end{equation}
Furthermore, define
\begin{equation}\label{def:a_l}
	a_l := a^\pm\left(k_l, l\pi + \frac{\pi}{2}\right) = \frac{k_l}{\sqrt{1-k_l^2}}.
\end{equation}

\end{definition}
\begin{remark}
	A numerical analysis  gives:
	\begin{center}
	\begin{tabular}{ c c }
		$k_0 \approx 0.76589502$ & $a_0 \approx 1.19119122$ \\ 
		$k_1 \approx 0.21470270$ & $a_1 \approx 0.21982924$ \\  
		$k_2 \approx 0.12784799$ & $a_2 \approx 0.12890582$   
	\end{tabular}
	\end{center}
\end{remark}

\begin{lemma}\label{lem:a_l}
	The following hold:
	\begin{enumerate} \upshape
		\item\label{lem:a_l:1} $a_0 > 1$ 
		\item\label{lem:a_l:2} $a_l < \frac{1}{2(l+1)}$ for $l \geq 1$
	\end{enumerate}
\end{lemma}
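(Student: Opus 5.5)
The plan is to reduce both inequalities to a single scalar equation for $k_l$ and then compare $k_l$ with an explicit threshold using elementary bounds on $E$ and $K$.

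\textbf{Reduction.} By \eqref{eq:lem:unduloid:h}, for $l$ even the equation $h_l^+(k_l)=0$ reads $(2l+1)k_lE(k_l)=1$. For $l$ odd, $h_l^-(k_l)=0$ reads $-(2l+1)k_lE(k_l)+1=0$. In both cases
\begin{equation*}
	\phi_l(k_l)=0,\qquad \phi_l(k)\vcentcolon=(2l+1)kE(k)-1 .
\end{equation*}
By \eqref{eq:lem:unduloid:h-prime} and the uniqueness of $\kappa_0$ in Lemma~\ref{lem:unduloid}, $\phi_l$ is strictly increasing on $(0,\kappa_0)$, since $\phi_l'=(2l+1)(2E-K)$ and $2E-K>0$ near $0$. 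Moreover $\phi_l(0)=-1$, and $k_l\in(0,\kappa_0)$ is its unique zero there. Hence, for any $\kappa\in(0,\kappa_0)$,
\begin{equation*}
	k_l<\kappa \iff \phi_l(\kappa)>0 .
\end{equation*}
Since $a_l=k_l/\sqrt{1-k_l^2}$ by \eqref{def:a_l} is strictly increasing in $k_l$, we have
\begin{equation*}
	a_0>1\iff k_0>\tfrac{1}{\sqrt2},\qquad a_l<\tfrac{1}{2(l+1)}\iff k_l<\kappa_l\vcentcolon=\tfrac{1}{\sqrt{1+4(l+1)^2}} .
\end{equation*}

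\textbf{Elementary bounds.} I will use the following two-sided bounds on $E$:
\begin{equation*}
	\tfrac{\pi}{2}\bigl(1-\tfrac{k^2}{2}\bigr)\le E(k)\le \tfrac{\pi}{2}\sqrt{1-\tfrac{k^2}{2}} .
\end{equation*}
The lower bound follows from $\sqrt{1-x}\ge 1-x$ on $[0,1]$. The upper bound follows from Cauchy--Schwarz (or Jensen) applied to $\int_0^{\pi/2}\sqrt{1-k^2\sin^2x}\,\d x$. For $K$ I will use the trivial bound $K(k)\le \tfrac{\pi}{2}(1-k^2)^{-1/2}$.

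\textbf{Part (1).} First I show $\tfrac{1}{\sqrt2}<\kappa_0$. For this it suffices that $2E-K>0$ at $k=\tfrac{1}{\sqrt2}$, because $2E-K$ is positive on $(0,\kappa_0)$ and negative afterwards. The bounds above give $2E\ge \pi\cdot\tfrac34\approx 2.356$ and $K\le \tfrac{\pi}{2}\sqrt2\approx 2.221$, so $2E-K>0$ there. It then remains to show $\phi_0(\tfrac{1}{\sqrt2})<0$, i.e.\ $E(\tfrac{1}{\sqrt2})<\sqrt2$. The upper bound gives $E(\tfrac{1}{\sqrt2})\le\tfrac{\pi}{2}\sqrt{3/4}\approx1.360<1.414$. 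Since $\phi_0$ is increasing on $(0,\kappa_0)$ and vanishes only at $k_0$, this yields $k_0>\tfrac1{\sqrt2}$ and hence $a_0>1$.

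\textbf{Part (2).} Here $\kappa_l\le 1/\sqrt{17}<\tfrac1{\sqrt2}<\kappa_0$, so by the reduction it suffices to check $\phi_l(\kappa_l)>0$. Using the lower bound on $E$, it is enough to show
\begin{equation*}
	\frac{2l+1}{\sqrt{1+4(l+1)^2}}\cdot\frac{\pi}{2}\Bigl(1-\frac{\kappa_l^2}{2}\Bigr)>1 .
\end{equation*}
Both factors are increasing in $l$. For the first factor this is a short derivative or squared-ratio check, which I expect to be the only mildly fiddly step. The second factor increases because $\kappa_l$ decreases. So the inequality only needs to be verified at $l=1$. There the left-hand side is
\begin{equation*}
	\frac{3}{\sqrt{17}}\cdot\frac{\pi}{2}\cdot\frac{33}{34}\approx 1.11>1,
\end{equation*}
which completes the argument.
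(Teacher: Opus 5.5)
Your proof is correct.

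\textbf{Part (2)} is essentially the paper's argument. It uses the same threshold $1/\sqrt{4(l+1)^2+1}$, the same lower bound $E(k)\ge\frac{\pi}{2}\bigl(1-\frac{k^2}{2}\bigr)$, and the same reduction to $l=1$ (the paper substitutes $n=2l+1$). It also arrives at the same final number $\frac{3}{\sqrt{17}}\frac{\pi}{2}\frac{33}{34}$.

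\textbf{Part (1)} takes a different route.
\begin{itemize}
\item The paper uses the pointwise bound $\sqrt{1-y}<1-\frac{y}{2}$, i.e.\ $E(k)<\frac{\pi}{2}\bigl(1-\frac{k^2}{4}\bigr)$. It then argues by contradiction: the explicit majorant $k\mapsto k\bigl(1-\frac{k^2}{4}\bigr)$ is increasing on $[0,\frac{1}{\sqrt2}]$ with maximum $\frac{7}{8\sqrt2}<\frac{2}{\pi}$.
\item This uses only the equation $k_0E(k_0)=1$. It needs no monotonicity of $kE(k)$, hence no information about where $\kappa_0$ lies.
\item The fact $\frac{1}{\sqrt2}<\kappa_0$, which Part (2) needs, then comes for free from $\frac{1}{\sqrt2}<k_0<\kappa_0$.
\end{itemize}
You instead evaluate at the single point $k=\frac{1}{\sqrt2}$, using the slightly sharper Cauchy--Schwarz bound. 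This forces you to invoke monotonicity of $\phi_0$ on $(0,\kappa_0)$. You therefore first have to locate $\kappa_0$ via the extra estimate $K(k)\le\frac{\pi}{2}(1-k^2)^{-1/2}$.

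That extra step is valid but avoidable. Your majorant $\frac{\pi}{2}k\sqrt{1-\frac{k^2}{2}}$ is itself increasing on $[0,1]$. So $k_0\le\frac{1}{\sqrt2}$ would immediately give $k_0E(k_0)\le\frac{\pi\sqrt3}{4\sqrt2}<1$, exactly as in the paper's argument.

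One cosmetic point: your $\kappa_l$ clashes with the paper's $\kappa_0$ (the zero of $2E-K$) when $l=0$. This is harmless, since you only use $\kappa_l$ for $l\ge 1$. A different letter (the paper writes $k_l^*$) would avoid confusion.
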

\begin{proof}
	Recall that by~\eqref{eq:lem:unduloid:h}, $k_l \in (0,\kappa_0)$ is characterized by the equation 
	\begin{equation}\label{eq:k_l}
	k_l(2l+1)E(k_l) = 1.
	\end{equation}
	For $l=0$, we have
	\begin{equation*}
		E(k_0) < \int_0^{\frac{\pi}{2}} 1-\frac{1}{2}k_0^2\cos^2(x) \, \d x = \frac{\pi}{2}\left(1-\frac{k_0^2}{4} \right).
	\end{equation*}
	Thus, $1 < k_0 \frac{\pi}{2}\left(1-\frac{k_0^2}{4} \right)$. Since $k \mapsto k\left(1-\frac{k^2}{4} \right)$ has a maximum of $\frac{7}{8\sqrt{2}} < \frac{2}{\pi}$ on $[0, \frac{1}{\sqrt{2}}]$, it follows that $k_0 > \frac{1}{\sqrt{2}}$, which, by monotonicity of $k \mapsto \frac{k}{\sqrt{1-k^2}}$, implies
	\begin{equation*}
		a_0 > \frac{\frac{1}{\sqrt{2}}}{\sqrt{1-1/2}} = 1.
	\end{equation*}
	Now, let $l \geq 1$. Since $a_l = \frac{k_l}{\sqrt{1-k_l^2}}$, the inequality \eqref{lem:a_l:2} is equivalent to 
	\begin{equation*}
		k_l < \frac{1}{\sqrt{4(l+1)^2+1}} =: k_l^*.
	\end{equation*}
	Since $k \mapsto k(2l+1)E(k)$ is strictly increasing on $(0,k_0)$ and $k_l^* < \frac{1}{\sqrt{2}} < k_0<\kappa_0$, by \eqref{eq:k_l}, it suffices to prove that $1 < k_l^*(2l+1)E(k_l^*)$.
	We have
	\begin{equation*}
		E(k_l^*) > \int_0^{\frac{\pi}{2}} 1-{(k_l^*)}^2\cos^2(x) \, \d x = \frac{\pi}{2}\left(1-\frac{{(k_l^*)}^2}{2}\right) = \frac{\pi}{2} \frac{8{(l+1)}^2+1}{8{(l+1)}^2+2}.
	\end{equation*}
	Let $n:=2l+1$. Then $4{(l+1)}^2 = n^2 + 2n + 1$, $n \geq 3$ and we get
	\begin{equation*}
		k_l^*(2l+1)E(k_l^*) > \frac{n}{\sqrt{n^2+2n+2}} \frac{\pi}{2} \frac{2n^2+4n+3}{2n^2+4n+4} \geq \frac{3}{\sqrt{17}} \frac{\pi}{2}\frac{33}{34} > 1
	\end{equation*}
	which completes the proof.
\end{proof}

\begin{lemma}\label{lem:a_bounds}
	The following hold:
	\begin{enumerate} \upshape
		\item\label{lem:a_bounds:1} $\lim_{k\nearrow 1}a_i(k, \tau_i(k))=\infty$ if $l=0$
		\item\label{lem:a_bounds:2} $\lim_{k\nearrow 1}a_1(k, \tau_1(k))=\frac{1}{2l}$ for $l \geq 1$
		\item\label{lem:a_bounds:3} $\lim_{k\nearrow 1}a_2(k, \tau_2(k))=\frac{1}{2\sqrt{l(l+2)}}$ for $l \geq 1$
		\item\label{lem:a_bounds:4} $a_2(k, \tau_2(k)) < \frac{1}{\sqrt{5}}$ for all $l\geq 1$
		\item\label{lem:a_bounds:5} $a_2(k, \tau_2(k)) > \frac{1}{2}$ if $l=0$ 
	\end{enumerate}
\end{lemma}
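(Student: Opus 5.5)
The plan is to work throughout with the two equivalent expressions for $a_i$: the defining formula \eqref{def:a_tau} and the simplified formula \eqref{def:a_tau:simplified}. Together with the equation $F_i(k,\tau_i(k))=0$, this lets us either bound $a_i$ directly or pin down the asymptotics of $\tau_i(k)$ as $k\nearrow1$. Write $\varepsilon:=\sqrt{1-k^2}$, $s:=\sin\tau_i$ and $c:=\cos\tau_i$.

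\emph{Item (1), $l=0$.} For $i=1$ we have $I_1(k,\tau)=\int_0^\tau\sqrt{1-k^2\cos^2x}\,\d x$. The integrand is increasing on $[0,\pi/2]$, so $I_1\le \tau\sqrt{1-k^2\cos^2\tau}$. Inserting this into \eqref{def:a_tau:simplified} gives
\[
a_1\ge \frac{1}{\tau\varepsilon}\ge\frac{2}{\pi\varepsilon}\to\infty .
\]
For $i=2$ we use $I_2\le I(k,\pi)=2E(k)$. Squaring $F_2=0$ then gives
\[
(1+kc)^2(\varepsilon^2+k^2s^2)\le 4E(k)^2k^2s^2 .
\]
Suppose $a_2=ks/(\varepsilon(1+kc))$ stayed bounded along a sequence $k\nearrow1$. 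Then $s/\varepsilon$ is bounded, so $\tau_2\to0$ and $(1+kc)^2\to4$. The inequality then forces $\varepsilon^2\lesssim (E^2-1)s^2$, and since $E(k)\to1$ by Remark~\ref{rem:def:elliptic_integral}, this makes $s/\varepsilon$ unbounded, a contradiction. Hence $a_2\to\infty$.

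\emph{Items (2) and (3), $l\ge1$.} First I show $\tau_i(k)\to0$. As $k\to1$, $I(k,t)\to\int_0^t|\sin x|\,\d x$ uniformly on compact sets. The limiting equation reduces to $2l\sin\tau=0$ on $(0,\pi/2)$; for instance $I_1\to 2l+1-\cos\tau$ and the second term is $(1-\cos\tau)\sin\tau$. This has no root in $(0,\pi/2]$, so compactness gives $\tau_i\to0$. Now set $x:=\tau/\varepsilon$ and expand both formulas for $a_i$ to leading order.

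For $i=1$ one has $1-kc\approx\tfrac12(\varepsilon^2+\tau^2)$. The defining formula gives $a_1\approx 2x/(\varepsilon^2(1+x^2))$, while the simplified formula gives $a_1\approx\sqrt{1+x^2}/I_1(1,0)$. Compatibility forces $x\to0$, and hence $a_1\to 1/I_1(1,0)=1/(2l)$.

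For $i=2$ one has $1+kc\to2$, so $a_2\approx x/2$ from \eqref{def:a_tau}. Also $a_2\approx\sqrt{1+x^2}/I_2(1,0)$ from \eqref{def:a_tau:simplified}, which yields a quadratic equation for the limit. The main bookkeeping difficulty is the precise value of $I_2$ at $\tau=0$ and its correction terms (of order $\varepsilon^2\log\varepsilon$). I would carry out this expansion carefully, since the constant $\frac{1}{2\sqrt{l(l+2)}}$ depends on it.

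\emph{Items (4) and (5), uniform bounds in $k$.} These I expect to be the main obstacle, since they concern all $k\in(k_l,1)$ and not just the limit. For (4) I would combine \eqref{def:a_tau:simplified} with the lower bound $I_2\ge I(k,l\pi+\pi/2)=(2l+1)E(k)\ge 3E(k)$, valid since $\tau_2<\pi/2$. Using $E\ge1$, this bounds $a_2$ by $\sqrt{1-k^2c^2}/(3\varepsilon)$. I would then use $F_2=0$ to control $s$ relative to $\varepsilon$, so that $\sqrt{\varepsilon^2+k^2s^2}/\varepsilon$ is bounded by a constant smaller than $3/\sqrt5$; note that at $k=k_l$ we have $a_l<\tfrac{1}{2(l+1)}$ by Lemma~\ref{lem:a_l}.

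For (5) I would argue in the opposite direction. The bound $I_2\le 2E(k)\le\pi$ gives $a_2\ge\sqrt{1-k^2c^2}/(\varepsilon\pi)$, and I would combine this with $ks\ge$ (something) coming from $F_2=0$, aiming for $a_2>\tfrac12$. If these crude estimates fail near $k=k_0$, I would instead try to show that $k\mapsto a_2(k,\tau_2(k))$ is monotone, by implicit differentiation of $F_2$. Then (4) and (5) would follow by comparing the endpoint values $a_l$ from Lemma~\ref{lem:a_l} with the limits computed in (1) and (3).
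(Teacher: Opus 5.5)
\textbf{Items (1)--(4).} These work or can be completed.

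For (1) you use a different route from the paper, which uses the uniform convergence of $I_i^2-(1-\sigma_ik\cos\tau_i)^2$ to $0$. You bound $a_1$ directly via the monotone integrand, and you argue by contradiction for $a_2$. Both are valid. For $i=2$ the relevant coefficient is $4E^2-(1+k\cos\tau_2)^2$, not a multiple of $E^2-1$, but it still tends to $0$.

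In (3) you need no $\varepsilon^2\log\varepsilon$ corrections. Eliminating $y:=k\sin\tau_2/\varepsilon$ between \eqref{def:a_tau} and \eqref{def:a_tau:simplified} is just $F_2=0$, and it gives the exact identity $a_2^2=1/\bigl(I_2^2-(1+k\cos\tau_2)^2\bigr)$. So $\tau_2\to0$ and $I_2\to 2l+2$ suffice, and your quadratic already yields $\frac{1}{2\sqrt{l(l+2)}}$.

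The same identity, together with $I_2>(2l+1)E(k)>3$ and $1+k\cos\tau_2<2$, completes your sketch of (4). This is exactly the paper's argument.

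\textbf{The gap is in (5).} The ingredients you name bound $I_2$ and $\cos\tau_2$ separately, and that cannot suffice.
\begin{itemize}
\item With $I_2\le\pi$ the estimate fails near $k_0$, as you suspect.
\item With $I_2\le 2E(k)$ and $1+k\cos\tau_2\ge1$, the identity above only gives $a_2\ge(4E(k)^2-1)^{-1/2}$. This is below $\tfrac12$ whenever $E(k)>\sqrt{5}/2$, for instance near $k_0$, where $E(k_0)=1/k_0\approx1.31$.
\end{itemize}

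Your fallback is false. For $l=0$ the map $k\mapsto a_2(k,\tau_2(k))$ is not monotone. It tends to $a_0$ as $k\searrow k_0$ with negative derivative there (see the proof of Theorem~\ref{thm:unduloid:a}), and it tends to $\infty$ as $k\nearrow 1$. So its minimum $\beta\approx1.018$ lies strictly below $a_0\approx1.19$, and comparing endpoint values would ``prove'' $a_2>a_0$, which is wrong.

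What is missing is an estimate that couples $I_2$ with $\tau_2$.
\begin{itemize}
\item The paper's route: set $A(t)=k\sin t/(k'(1+k\cos t))$ and locate $t_k$ with $A(t_k)=\tfrac12$. Then show $F_2(k,t_k)<0$, using $E(k)\le 1+k'(\tfrac{\pi}{2}-1)$, $I(k,t)>k't$ and $k_0>\tfrac34$. Since $F_2(k,\cdot)$ increases through its zero, $\tau_2>t_k$, hence $a_2=A(\tau_2)>\tfrac12$.
\item A shorter coupling also works. From $I(k,\tau)\ge k(1-\cos\tau)$ you get $I_2=2E(k)-I(k,\tau_2)\le 2E(k)-k+k\cos\tau_2$. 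Hence $I_2^2-(1+k\cos\tau_2)^2\le 4E(k)^2-(1+k)^2$. The right-hand side decreases in $k$ and is already $<4$ at $k=\tfrac34$, by $E(k)<\frac{\pi}{2}\bigl(1-\frac{k^2}{4}-\frac{3k^4}{64}\bigr)$. So $a_2>\tfrac12$ for all $k>k_0>\tfrac34$.
\end{itemize}
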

\begin{proof}
	Rearranging~\eqref{def:F_tau} and squaring yields
	\begin{equation*}
	\frac{k^2 \sin^2(\tau_i(k))}{1-k^2 \cos^2(\tau_i(k))} = \left(\frac{1-\sigma_i k \cos(\tau_i(k))}{I_i(k, \tau_i(k))}\right)^2 =: f_i(k)
	\end{equation*}
	Using
	\begin{equation*}
		\frac{1-k^2\cos^2 \tau_i(k)}{1-k^2} = \frac{1}{1-f_i(k)},
	\end{equation*}
	and~\eqref{def:a_tau:simplified}, we conclude 
	\begin{equation}\label{eq:a_squared}
		\begin{aligned}
			{a_i(k, \tau_i(k))}^2 
			&= \frac{1}{(1-f_i(k)){I_i(k, \tau_i(k))}^2} \\
			&= \frac{1}{{I_i(k,\tau_i(k))}^2- {(1- \sigma_i k \cos(\tau_i(k)))}^2}.
		\end{aligned}
	\end{equation}
	A simple calculation shows that $(k,t) \mapsto {I_i(k,t)}^2- {(1- \sigma_i k \cos(t))}^2$ converges uniformly to $0$ as $k \nearrow 1$ on the domain $[0,1]\times [0,\frac{\pi}{2}]$. Therefore,~\eqref{lem:a_bounds:1} is proven.

	To see~\eqref{lem:a_bounds:2} and~\eqref{lem:a_bounds:3}, let $\tilde{\tau_i} \in [0, \frac{\pi}{2}]$ be an accumulation point of $\tau_i(k)$ as $k \nearrow 1$. Write $\tau_i(k_i^{(n)}) \to \tilde{\tau_i}$, $k_i^{(n)} \xrightarrow{n \to \infty} 1$. Then,
	\begin{equation*}
		I_i \left(k_i^{(n)}, \tau(k_i^{(n)}) \right) \xrightarrow{n \to \infty} 2l + 1 - \sigma_i \cos(\tilde{\tau_i}).
	\end{equation*}
	Therefore,
	\begin{equation*}
		0 = F_i\left(k_i^{(n)}, \tau(k_i^{(n)})\right) \xrightarrow{n \to \infty} 2l\sin(\tilde{\tau_i})
	\end{equation*}
	Since $l \geq 1$, we get $\tilde{\tau_i}=0$ and thus, $\lim_{k \nearrow 1} \tau_i(k) = 0$.
	Together with~\eqref{eq:a_squared}, it follows that
	\begin{equation*}
		{a_i(k, \tau_i(k))}^2 \xrightarrow{k \nearrow 1} \frac{1}{4l^2+4l(1-\sigma_i)},
	\end{equation*}
	which proves~\eqref{lem:a_bounds:2} and~\eqref{lem:a_bounds:3}.\\
	Note that by~\eqref{eq:a_squared}, the inequality~\eqref{lem:a_bounds:4} is equivalent to
	\begin{equation*}
		{I_2(k,\tau_2(k))}^2 - {(1+k\cos(\tau_2(k)))}^2 > 5.
	\end{equation*}
	But this is true since
	\begin{equation*}
		I_2(k,\tau_2(k)) = 2(l+1)E(k) - I(k,\tau_2(k)) > (2l+1)E(k) > 3
	\end{equation*}
	and $1+k\cos(\tau_2(k)) < 2$. This proves~\eqref{lem:a_bounds:4}\\
	Finally, let $k' := \sqrt{1-k^2}$. By~\cite[900.07]{ByrdFriedman1971Springer},
	\begin{equation*}
		E(k) < \frac{\pi}{2}\left( 1-\frac{k^2}{4} - \frac{3k^4}{64} \right).
	\end{equation*}
	One readily checks that this proves $\frac{3}{4}E(\frac{3}{4}) < 1$ and consequently, by~\eqref{eq:k_l} and strict monotonicity of $kE(k)$ on $(0,k_0)$, it implies that $k_0 > \frac{3}{4}$. By convexity of $k \mapsto \sqrt{\cos^2 x+k^2\sin^2 x}$, we have
	\begin{equation*}
		\sqrt{\cos^2 x+k'^2\sin^2 x} \le (1-k')\cos x+k'
	\end{equation*}
	and hence,
	\begin{equation*}
		E(k) \le (1-k')\int_0^{\pi/2}\cos x \, \d x +k'\int_0^{\pi/2} \d x =1+k'\left(\frac{\pi}{2}-1\right).
	\end{equation*}
	Since $I(k,t) > k't$, $l=0$ implies
	\begin{equation*}
		I_2(k,\tau_2(k)) = 2E(k)-I(k,\tau_2(k)) < 2+k'(\pi-2)-k't.
	\end{equation*}
	Let
	\begin{equation*}
		A(t):= \frac{k\sin t}{k'(1+k\cos t)}.
	\end{equation*}
	Then, $A'(t) >0$ for $t \in [0,\frac{\pi}{2}]$ and since $A(0)=0, A(\frac{\pi}{2}) = \frac{k}{k'} > \frac{1}{2}$ for $k>k_0>\frac34$, there exists a unique $t_k \in (0,\frac{\pi}{2})$, sucht that $A(t_k)=\frac{1}{2}$. Equivalently,
	\begin{equation}\label{eq:t_k}
		2k\sin(t_k)=k'(1+k\cos(t_k)).
	\end{equation}
	Let $y:=k\cos(t_k)$. Then, squaring and rearranging~\eqref{eq:t_k} yields
	\begin{equation*}
		k^2 = \frac{5y^2+2y+1}{y^2+2y+5} =: \Psi(y).
	\end{equation*}
	Then,
	\begin{equation*}
		\Psi'(y)=\frac{8(y^2+6y+1)}{{(y^2+2y+5)}^2} > 0 \quad \text{for } y\geq 0
	\end{equation*}
	and $y > \frac{1}{2}$, since otherwise, by monotonicity,
	\begin{equation*}
		k^2 = \Psi(y) \leq \Psi\left(\frac{1}{2}\right) = \frac{13}{25} < {\left(\frac{3}{4}\right)}^2 < k_0^2 < k^2.
	\end{equation*}
	Combining this with~\eqref{eq:t_k} gives
	\begin{equation*}
		\frac{\sqrt{1-k^2\cos^2 t_k}}{k'} = \sqrt{1+\frac{(1+y)^2}{4}} > \frac{5}{4}
	\end{equation*}
	and
	\begin{equation*}
		t_k > \sin t_k = \frac{k'(1+y)}{2k} > \frac{3}{4} \frac{k'}{k}.
	\end{equation*}
	Together, these two inequalities yield
	\begin{align*}
		\frac{2\sqrt{1-k^2\cos^2 t_k}}{k'} - I_2(k,t_k) &> 2\left( \frac{\sqrt{1-k^2\cos^2 t_k}}{k'} - 1\right)-k'(\pi - 2) + k' t_k\\
		&> \frac{1}{2}+ \frac{3{(k')}^2}{4k} - k'(\pi - 2)\\
		&\geq k'\sqrt{\frac{3}{2k}} - k'(\pi-2) \quad \text{(AM-GM)}\\
		&> 0.
	\end{align*}
	Thus, by~\eqref{eq:t_k},
	\begin{equation*}
		F_2(k,t_k) = (1+y)\left( \frac{k'}{2} I_2(k,t_k) - \sqrt{1-k^2\cos^2 t_k}\right) < 0.
	\end{equation*}
	Finally, since 
	\begin{equation*}
		\partial_t F_2(k,\tau_2(k)) = \frac{{(k')}^2(1+k\cos\tau_2(k))\cos\tau_2(k)}{\sin\tau_2(k)\sqrt{1-k^2\cos^2\tau_2(k)}} > 0,
	\end{equation*}
	and $F_2(k,\tau_2(k)) = 0$, we conclude that $\tau_2(k)>t_k$. Since $A$ was strictly increasing and $A(t_k) = \frac{1}{2}$, the claim follows.
\end{proof}

\begin{lemma}\label{lem:a_monotone}
	Let $l \geq 0$. Then, using the definitions~\eqref{def:tau:1} and~\eqref{def:a_tau}, $k \mapsto \tau_1(k)$ is strictly monotonically decreasing and $k \mapsto a_1(k, \tau_1(k))$ is strictly monotonically increasing on $(k_l, 1)$.
\end{lemma}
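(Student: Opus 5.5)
The plan is to treat $\tau_1$ as an implicit function of $k$ defined by $F_1(k,\tau_1(k))=0$, and to read off both monotonicity statements from the signs of the partial derivatives of $F_1$ and of the quantity in \eqref{eq:a_squared}.

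\emph{Step 1: the $\tau$-derivative.} For $t=l\pi+\tau$ one has $\sin t=(-1)^l\sin\tau$ and $\cos t=(-1)^l\cos\tau$. With the sign convention $+$ for even $l$ and $-$ for odd $l$, this gives $F_1(k,\tau)=(-1)^lF^\pm(k,l\pi+\tau)$. By \eqref{eq:lem:unduloid:partialF},
\begin{equation*}
\partial_\tau F_1(k,\tau)=(-1)^l\cos(l\pi+\tau)H^\pm(k,l\pi+\tau)=\cos(\tau)\,H^\pm(k,l\pi+\tau)>0
\end{equation*}
for $\tau\in(0,\frac{\pi}{2})$. The implicit function theorem then shows that $\tau_1$ is $C^1$ on $(k_l,1)$, with $\tau_1'=-\partial_kF_1/\partial_\tau F_1$. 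The first claim is therefore equivalent to $\partial_kF_1(k,\tau_1(k))>0$.

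\emph{Step 2: the $k$-derivative at a zero.} Write $S=\sqrt{1-k^2\cos^2\tau}$ and $J_1(k,\tau)=\int \frac{\d x}{\sqrt{1-k^2\cos^2 x}}$ over the same range as $I_1$. Using $k\,\partial_kI_1=I_1-J_1$, we get
\begin{equation*}
k\,\partial_kF_1=k\sin\tau\,(2I_1-J_1)+k\cos\tau\,S+\frac{(1-k\cos\tau)k^2\cos^2\tau}{S}.
\end{equation*}
At a zero we substitute $k\sin\tau\, I_1=(1-k\cos\tau)S$, which removes one copy of $I_1$. This leaves a bound of the form
\begin{equation*}
k\sin\tau\,(I_1-J_1)>-\bigl(\text{remaining positive terms}\bigr)
\end{equation*}
to prove, i.e.\ an estimate of $J_1-I_1=k^2\int\cos^2/\sqrt{\cdot}$ against $I_1$.

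I would first try to control this integral piecewise. Over full periods it equals $2l(K-E)$, and in the term $k\sin\tau\,(J_1-I_1)$ it is multiplied by $\sin\tau$. Over each full period the defining relation \eqref{eq:k_l}, together with $k>k_l$, makes $k(2l+1)E$ large. The partial piece over $(0,\tau)$ is bounded using $\sin\tau\le\tau$ and $\cos^2\le 1$. This step is the main obstacle. The sign of $\partial_kF_1$ is genuinely a competition between $E$ and $K$, and near $k\nearrow1$ the function $K$ blows up logarithmically. There I would additionally use that $\sin\tau_1(k)\to0$, as shown in the proof of Lemma~\ref{lem:a_bounds}, and compare the rates. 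If the direct estimate fails, the fallback is to parametrize instead by $\tau$: show $\partial_\tau F_1/\partial_k F_1$ has a fixed sign by differentiating along the curve $F_1=0$ and checking its endpoints $k=k_l$ (where $\tau=\frac{\pi}{2}$) and $k\nearrow1$.

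\emph{Step 3: monotonicity of $a_1$.} By \eqref{eq:a_squared}, $a_1^{-2}=D(k,\tau_1(k))$ with $D=I_1^2-(1-k\cos\tau)^2$, so it suffices to show $\frac{\d}{\d k}D(k,\tau_1(k))<0$. At a zero of $F_1$ we have
\begin{equation*}
\partial_\tau D=2I_1S-2(1-k\cos\tau)k\sin\tau=\frac{2SD}{I_1}>0.
\end{equation*}
Combined with $\tau_1'<0$ from Step 2, the term $\partial_\tau D\,\tau_1'$ is negative. It then remains to check $\partial_kD=2I_1\partial_kI_1+2(1-k\cos\tau)\cos\tau\le0$ along the curve, or to absorb it into the negative term. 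For this I would substitute $\partial_kI_1=(I_1-J_1)/k$ and reuse the estimate on $J_1-I_1$ from Step 2, so that both monotonicity statements rest on a single integral bound.
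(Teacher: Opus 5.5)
Your Step 1 is correct, and your expression $k\,\partial_kF_1=\frac{1-k^3\cos^3\tau}{S}-k\sin\tau\,(J_1-I_1)$ at a zero matches the paper's formula for $F_k$. The gap is that positivity of this quantity is the whole content of the lemma, and Step 2 does not prove it. It only lists strategies, and these cannot work as sketched.

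The needed inequality
\[
\frac{1-k^3\cos^3\tau}{S}>k\sin\tau\,(J_1-I_1)
\]
is false for general $(k,\tau)$. For fixed $\tau\in(0,\frac{\pi}{2})$ and $k\nearrow1$, the left side stays bounded while $J_1\to\infty$, for every $l$. So it holds only because $\tau=\tau_1(k)$ lies where it does. A single substitution of $F_1=0$ followed by bounds like $\sin\tau\le\tau$, $\cos^2\le1$ does not encode this. Also, on $(k_l,1)$ you only know $k(2l+1)E(k)>1$, not that it is large, and at $k\searrow k_l$ the inequality reduces exactly to $2E(k_l)>K(k_l)$. Your fallback of checking signs at the endpoints of the curve $F_1=0$ says nothing about the interior.

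The missing idea is to exploit the full sign information $F_1(k,t)<0$ for $0\le t<\tau_1(k)$ through an ODE in $t$ at fixed $k$. Write $\partial_kF_1=\frac{1-k\cos\tau}{kS\,I_1}P(k,\tau)$ with $P(k,t)=(2+k\cos t)I_1-(1-k^2\cos^2t)J_1$. The paper shows that $W=P+\frac{2k\sin t}{1-k^2}F_1$ satisfies
\[
\partial_tW=\frac{2k^2\sin t\cos t}{1-k^2\cos^2t}\,W-\frac{1+k^2\cos^2t}{1-k^2\cos^2t}\,F_1 .
\]
Since $W(k,0)=2l\bigl[(2+k)E-(1-k^2)K\bigr]\ge0$ and the forcing term is positive on $(0,\tau)$, an integrating factor gives $P(k,\tau)=W(k,\tau)>0$.

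Your Step 3 is fine once $\tau_1'<0$ is available, and it closes more simply than the paper's case split on the sign of $\cos\tau-k$ in $\frac{\mathrm{d}}{\mathrm{d}k}\ln a_1$. On the curve,
\[
\partial_kD=2(1-k\cos\tau)\Bigl[\cos\tau-\frac{S(J_1-I_1)}{k^2\sin\tau}\Bigr].
\]
Use $\pi$-periodicity of $\cos^2$ and $\sqrt{1-k^2\cos^2x}\le S$ on $[0,\tau]$. Then
\[
S(J_1-I_1)\ge k^2\int_0^\tau\cos^2x\,\mathrm{d}x=\frac{k^2}{2}(\tau+\sin\tau\cos\tau)>k^2\sin\tau\cos\tau,
\]
so $\partial_kD<0$. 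You left this unchecked, but it is elementary. The genuinely missing piece is the proof of $\partial_kF_1(k,\tau_1(k))>0$.
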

\begin{proof}
	Rearranging~\eqref{def:F_tau} yields
	\begin{equation}\label{eq:I_tau}
		I_1(k, \tau_1(k)) = \frac{(1-k \cos(\tau_1(k)))\sqrt{1-k^2 \cos^2(\tau_1(k))}}{k \sin(\tau_1(k))}
	\end{equation}
	and the derivatives of $I_1$ are given by
	\begin{equation}\label{eq:I_diff}
		\partial_t I(k,t) = \sqrt{1-k^2\cos^2(t)}, \quad \partial_k I(k,t) = \int_0^t \frac{-k\cos^2(x)}{\sqrt{1-k^2\cos^2(x)}} \, \d x < 0.
	\end{equation}
	For simplicity, write $I_1(t):= I_1(k,t)$, $I_k := \partial_k I_1(k,\tau_1(k))$, $a(k, t) := a_1(k,t)$, and $\tau := \tau_1(k)$ from now on. By substituting~\eqref{eq:I_tau}, we can express the derivatives of $F_1$ at $t=\tau_1(k)$ as
	\begin{equation}\label{eq:F_t}
		F_t := \partial_t F_1(k,\tau_1(k)) = \frac{\cos(\tau) (1-k\cos(\tau))(1-k^2)}{\sin(\tau)\sqrt{1-k^2 \cos^2(\tau)}}
	\end{equation}
	\begin{equation}\label{eq:F_k}	
		F_k :=\partial_k F_1(k, \tau_1(k)) = \frac{1-k^3\cos^3(\tau)}{k\sqrt{1-k^2\cos^2(\tau)}} + k\sin(\tau) I_k
	\end{equation}
	Then
	\begin{align*}
		\frac{\d \ln(a(k, \tau))}{\d k} &= \frac{1}{k} + \frac{\cos \tau}{\sin \tau}\tau' + \frac{k}{1-k^2} - \frac{-\cos(\tau) + k \sin(\tau)\tau'}{1-k \cos \tau} \\
		&= \frac{1-k^3 \cos \tau}{k(1-k^2)(1-k \cos \tau)} + \tau' \frac{\cos \tau - k}{\sin(\tau)(1-k \cos \tau)}
	\end{align*}
	Note that by the implicit function theorem, $\tau' := \tau'(k) = -\frac{F_k}{F_t}$.\\
	If $k \leq \cos \tau$, then substituting~\eqref{eq:F_t},~\eqref{eq:F_k} and $\tau'$ yields
	\begin{equation*}
		\frac{\d\ln(a(k, \tau))}{\d k} = \frac{\sqrt{1-k^2\cos^2\tau}\sin\tau}{\cos\tau(1-k^2){(1-k\cos\tau)}^2} \left(\sqrt{\frac{1-k\cos\tau}{1+k\cos\tau}} - kI_k(\cos\tau-k)\right)
	\end{equation*}
	and since $I_k < 0$, it follows that $\frac{\d\ln(a(k, \tau))}{\d k} > 0$.\\
	On the other hand, if $k > \cos \tau$, it suffices to show that $\tau' < 0$ in order to prove strict monotonicity. To complete the proof of the Lemma, we prove $\tau' < 0$ on $(k_l, 1)$. Since $F_t>0$, all that remains is to prove $F_k > 0$. We have
	\begin{equation*}
		kI_k = \int_0^{\tau+l\pi} \frac{-k^2\cos^2(x)}{\sqrt{1-k^2\cos^2(x)}} \, \d x = I_1(\tau) -\int_0^{\tau+l\pi} \frac{1}{\sqrt{1-k^2\cos^2 x}} \, \d x.
	\end{equation*}
	Define
	\begin{equation*}
		J(t) := \int_0^{t+l\pi} \frac{1}{\sqrt{1-k^2\cos^2 x}} \, \d x.
	\end{equation*}
	Then, substituting $kI_k$ and using~\eqref{eq:I_tau} two times, it follows that
	\begin{equation}\label{eq:F_k:2}
	\begin{aligned}
		F_k &= \frac{1-k^3\cos^3(\tau)}{k\sqrt{1-k^2\cos^2(\tau)}} + \sin(\tau) I_1(\tau) -\sin(\tau)J(\tau)\\
		&= \frac{(1-k\cos\tau)(2+k\cos\tau)}{k\sqrt{1-k^2\cos^2(\tau)}} -\sin(\tau) J(\tau)\\
		&= \frac{1-k\cos\tau}{k\sqrt{1-k^2\cos^2(\tau)}} \left( 2+k\cos\tau - (1-k^2\cos^2\tau)\frac{J(\tau)}{I_1(\tau)}\right).
	\end{aligned}
	\end{equation}
	
	Let 
	\begin{equation*}
		\begin{aligned}
			P(k,t) &:= (2+k\cos t)I_1(t)-(1-k^2\cos^2 t)J(t),\\
			W(k,t) &:= P(k,t) + \frac{2k\sin t}{1-k^2}F_1(k,t).
		\end{aligned}
	\end{equation*}
	Note that
	\begin{equation*}
		\begin{aligned}
		\partial_t F_1(k,t) &= k\cos t I_1(t)-\frac{k^2\sin t\cos t(1-k\cos t)}{\sqrt{1-k^2\cos^2t}} \\
		&= \frac{\cos t}{\sin t}F_1(k,t) + \frac{\cos t(1-k\cos t)(1-k^2)}{\sin t \sqrt{1-k^2\cos^2 t}}.
		\end{aligned}
	\end{equation*}
	This yields
	\begin{equation*}
		\begin{aligned}
			\partial_t P(t,k) &= -k\sin t I_1(t)+(1+k\cos t)\sqrt{1-k^2\cos^2 t} -2k^2\cos t \sin t J(t)\\
			&= \frac{2k^2\cos t\sin t}{1-k^2\cos^2 t} P(t,k) - \frac{1+4k\cos t+k^2\cos^2t}{1-k^2\cos^2 t} F_1(t,k) - \frac{2k\cos t\sqrt{1-k^2\cos^2 t}}{1+k\cos t}.
		\end{aligned}
	\end{equation*}
	Combining the above, we get
	\begin{equation*}
		\partial_t W(k,t) = \frac{2k^2\cos t\sin t}{1-k^2\cos^2 t} W(k,t) - \frac{1+k^2\cos^2 t}{1-k^2\cos^2 t} F_1(k,t).
	\end{equation*}
	Since $F_1(k,0) = -(1-k)\sqrt{1-k^2} < 0$, $F_1(k,\tau) = 0$ and $F_t >0$,
	\begin{equation*}
		F_1(k,t) < 0 \quad \text{for } 0 \leq t < \tau.
	\end{equation*}
	Set
	\begin{equation*}
		\rho(k,t) := \frac{2k^2\cos t \sin t}{1-k^2\cos^2 t}.
	\end{equation*}
	Then,
	\begin{equation}\label{eq:integrated_factor}
		\dt \left( e^{-\int_0^t \rho(k,x) \, \d x} W(k,t)\right) = -e^{-\int_0^t \rho(k,x) \, \d x} \frac{1+k^2\cos^2 t}{1-k^2\cos^2 t}F_1(k,t).
	\end{equation}
	Since
	\begin{equation*}
		W(k,0) = (2+k)I_1(0) - (1-k^2)J(0) = 2l \left[ (2+k)E(k) - (1-k^2)K(k) \right] \geq 0
	\end{equation*}
	and the right hand side in \eqref{eq:integrated_factor} is strictly positive for $0<t<\tau$, it follows
	\begin{equation*}
		P(k,\tau) = W(k,\tau) > 0.
	\end{equation*}
	With~\eqref{eq:F_k:2}, we conclude $F_k >0$, which completes the proof.
\end{proof}

\begin{theorem}\label{thm:unduloid:a}
	Given $a>0$, there exists $k>0$ such that $U^+(a,k)$ or $U^-(a,k)$ intersects the unit circle orthogonally if and only if
	\begin{equation*}
		a \in \left( 0, \frac{1}{2} \right) \cup [\beta, \infty),
	\end{equation*}
	where, with $l=0$,
	\begin{equation*}
		\beta = \min_{k \in (k_0, 1)} a_2(k,\tau_2(k)) \in \left(\frac{1}{2}, a_0\right).
	\end{equation*}
\end{theorem}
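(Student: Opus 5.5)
The plan is to describe the set of attainable values of $a$ as the union, over all branches in Theorem~\ref{thm:unuloid}, of the images of the continuous maps $k\mapsto a_i(k,\tau_i(k))$ on $(k_l,1)$, together with the boundary values $a_l$ at $k=k_l$. There, $t=l\pi+\frac{\pi}{2}$ is a zero of $F^\pm(k_l,\cdot)$, and by \eqref{def:a_l} the corresponding value is $a_l$. By Proposition~\ref{prp:undulary:intersection} and Theorem~\ref{thm:unuloid}, these are all orthogonal intersections. Note that for $k\le k_l$ there are no zeros in the relevant interval, since $F^\pm(k,\cdot)$ is monotone between consecutive multiples of $\frac{\pi}{2}$ and $h_l^\pm$ has the wrong sign there. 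Continuity of $\tau_i$ in $k$ follows from the implicit function theorem, since $\partial_tF_i\neq0$ at zeros, as shown in the proofs of Lemmas~\ref{lem:a_bounds} and~\ref{lem:a_monotone}. As $k\searrow k_l$, both $\tau_1$ and $\tau_2$ tend to $\frac{\pi}{2}$, so $a_i\to a_l$. Hence each branch image is an interval with one endpoint $a_l$, and the other end governed by the limit $k\nearrow1$ computed in Lemma~\ref{lem:a_bounds}.

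\emph{Branches with $l\ge1$.} By Lemma~\ref{lem:a_monotone}, $a_1$ increases strictly from $a_l$ to $\frac{1}{2l}$ by Lemma~\ref{lem:a_bounds}\eqref{lem:a_bounds:2}, so its image is $[a_l,\frac{1}{2l})$. For $a_2$, the image is contained in $(0,\frac{1}{\sqrt5})$ by Lemma~\ref{lem:a_bounds}\eqref{lem:a_bounds:4}. By connectedness it contains the interval between $a_l$ and $\frac{1}{2\sqrt{l(l+2)}}$, using Lemma~\ref{lem:a_bounds}\eqref{lem:a_bounds:3}. Taking the union over $l\ge1$, the intervals $[a_l,\frac{1}{2l})$ alone cover $(0,\frac12)$: since $a_l<\frac{1}{2(l+1)}$ by Lemma~\ref{lem:a_l}\eqref{lem:a_l:2} and $a_l\to0$, consecutive intervals overlap. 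Moreover, all values from $l\ge1$ lie below $\frac12$, because $\frac{1}{\sqrt5}<\frac12$ and $a_1<\frac{1}{2l}\le\frac12$ by strict monotonicity. This gives exactly $(0,\frac12)$ from the branches with $l\ge1$.

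\emph{Branch $l=0$.} Here $a_1$ increases strictly from $a_0$ to $\infty$, using Lemma~\ref{lem:a_monotone} and Lemma~\ref{lem:a_bounds}\eqref{lem:a_bounds:1}, so its image is $[a_0,\infty)$. For $a_2$, the function $k\mapsto a_2(k,\tau_2(k))$ is continuous on $(k_0,1)$, tends to $a_0$ as $k\searrow k_0$, and tends to $\infty$ as $k\nearrow1$ by Lemma~\ref{lem:a_bounds}\eqref{lem:a_bounds:1}. It is also $>\frac12$ by Lemma~\ref{lem:a_bounds}\eqref{lem:a_bounds:5}. Therefore the infimum over $[k_0,1)$ is attained, and its image is $[\beta',\infty)$ with $\beta'=\min(\beta,a_0)$.

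The main obstacle is showing that this minimum is attained in the interior with value strictly less than $a_0$, i.e.\ $\beta<a_0$. Equivalently, $a_2$ must initially decrease as $k$ increases from $k_0$. I would try to prove this by a first-order expansion at $k=k_0$, $\tau_2=\frac{\pi}{2}$, computing $\tau_2'(k_0)$ via the implicit function theorem from $F_2$. The aim is to show $\frac{\d}{\d k}\ln a_2>0$ fails, i.e.\ $a_2'(k_0^+)<0$. This mirrors the computation in Lemma~\ref{lem:a_monotone}, but with $\sigma_2=-1$, where the $\tau'$-term now has the opposite effect. The alternative is to exhibit one explicit $k$ with $a_2<a_0$ using the elliptic-integral bounds of Remark~\ref{rem:def:elliptic_integral}.

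Once $\beta<a_0$ is established, $\beta>\frac12$ follows from Lemma~\ref{lem:a_bounds}\eqref{lem:a_bounds:5}, and the union of the two $l=0$ images is $[\beta,\infty)$. Combining both cases yields the claimed set $(0,\frac12)\cup[\beta,\infty)$, since $\beta>\frac12$ also leaves $[\frac12,\beta)$ uncovered.
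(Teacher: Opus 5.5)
Your reduction matches the paper's. The attainable $a$ are the union of the images of $k\mapsto a_i(k,\tau_i(k))$, together with the values $a_l$ at $k=k_l$. Via Lemmas~\ref{lem:a_l}, \ref{lem:a_bounds}, \ref{lem:a_monotone}, the branches with $l\ge1$ give exactly $(0,\frac12)$, and $a_1$ with $l=0$ gives $[a_0,\infty)$. Lemma~\ref{lem:a_bounds}\eqref{lem:a_bounds:5} gives the bound $\frac12$.

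As written, however, this only proves the statement with $\beta'=\min_{[k_0,1)}a_2\le a_0$. What the theorem adds is that the minimum is attained inside $(k_0,1)$ and lies strictly below $a_0$. You leave exactly that step open, and the route you suggest for it fails. By \eqref{eq:lem:unduloid:partialF}, $\partial_tF_2(k_0,\frac{\pi}{2})=0$, so the implicit function theorem cannot give $\tau_2$ as a differentiable function of $k$ at $k_0$. The point $(k_0,\frac{\pi}{2})$ is a fold of the zero set, where the branches $\tau_1$ and $\tau_2$ are born, and $\tau_2(k)$ behaves like $\frac{\pi}{2}-C\sqrt{k-k_0}$. There is no $\tau_2'(k_0)$ and no first-order expansion in $k$. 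Relatedly, your claim that each branch image has $a_l$ as an endpoint is false for $a_2$ when $l=0$; its failure is precisely what the theorem asserts.

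The missing idea is to use the non-degenerate direction. Since $k_0<\kappa_0$,
$\partial_kF_2(k_0,\frac{\pi}{2})=\frac{\d}{\d k}(kE(k))|_{k=k_0}=2E(k_0)-K(k_0)>0$.
The paper sets $c=\cos\tau$ and $G(k,c)=F_2(k,\arccos c)$, and checks $\partial_cG(k_0,0)=0$ and $\partial_c^2G(k_0,0)=-(1-k_0^2)<0$. It then solves for $k(c)$ with $k'(0)=0$ and $k''(0)>0$. Along this curve, $B(c)=a_2$ satisfies $B'(0)=-k_0^2/\sqrt{1-k_0^2}<0$, while $k'(c)>0$ for small $c>0$. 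Hence $a_2$ decreases as $k$ leaves $k_0$.

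Alternatively, within your own framework:
\begin{itemize}
\item $\tau_2'(k)=-\partial_kF_2/\partial_tF_2\to-\infty$ as $k\searrow k_0$, because the numerator tends to $2E(k_0)-K(k_0)>0$ and the denominator to $0^+$.
\item In $\frac{\d}{\d k}\ln a_2=\frac1k+\frac{k}{1-k^2}-\frac{\cos\tau}{1+k\cos\tau}+\tau'\frac{k+\cos\tau}{\sin\tau(1+k\cos\tau)}$, all other terms stay bounded and the coefficient of $\tau'$ tends to $k_0>0$.
\item Therefore $\frac{\d}{\d k}a_2\to-\infty$.
\end{itemize}
Either argument closes the gap and yields $\beta\in(\frac12,a_0)$, attained in $(k_0,1)$.
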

\begin{remark}
	A numerical analysis shows that $\beta \approx 1.01783$.
\end{remark}
\begin{proof}
	Since $F_i(k, \frac{\pi}{2}) = kI(k,l\pi + \frac{\pi}{2})-1 = k(2l+1)E(k)-1$, the monotonicity~\eqref{eq:lem:unduloid:partialF} implies $\lim_{k \searrow k_l} \tau_i(k) = \frac{\pi}{2}$ and hence,
	\begin{equation*}
		\lim_{k \searrow k_l} a_i(k, \tau_i(k)) = a_l.
	\end{equation*}
	Then, combining the previous lemmas, it follows that
	\begin{align*}
		(a_0, \infty) &= \{a_1(k,\tau_1(k)) | k \in (k_0,1)\},\\
		\left(a_l, \frac{1}{2l}\right) &= \{a_1(k,\tau_1(k)) | k \in (k_l,1)\} \quad \text{for } l\geq 1,\\
		\left(0, \frac{1}{2}\right) \supset \left(0, \frac{1}{\sqrt{5}}\right) &\supset \{a_2(k,\tau_2(k)) | k \in (k_l,1)\} \quad \text{for } l\geq 1.
	\end{align*}
	Since $a_1(k,\tau_1(k)) > a_0$ if $l=0$ and $a_1(k,\tau_1(k)) < \frac{1}{2(l+1)}$ for $l\geq 1$, it follows that a solution for $a>0$ exists if and only if
	\begin{equation*}
		a \in \left( 0, \frac{1}{2} \right) \cup \left[\beta, \infty\right),
	\end{equation*}
	provided the infimum $\beta  = \inf_{k \in (k_0, 1)} a_2(k,\tau_2(k))$ with $l=0$ is attained.\\
	To show that the infimum is attained, we will show that
	\begin{equation*}
		\lim_{k \searrow k_0} \frac{\d}{\d k} a_2(k,\tau_2(k)) < 0.
	\end{equation*}
	With a change of variables, it can be shown that
	\begin{equation*}
		I(c) := I_2(k,\arccos c) = E(k) + \int_0^c \frac{\sqrt{1-k^2u^2}}{\sqrt{1-u^2}} \, \d u
	\end{equation*}
	and
	\begin{equation*}
		G(k,c) := F_2(k,\arccos c) = k\sqrt{1-c^2} I(c) - (1+kc)\sqrt{1-k^2c^2}.
	\end{equation*}
	Then,
	\begin{equation*}
		\partial_k G(k_0,0) = \left.\frac{\d}{\d k} (kE(k))\right\vert{}_{k_0} > 0
	\end{equation*}
	and
	\begin{equation*}
		\partial_c G(k_0,0) = 0, \quad \partial_c^2 G(k_0,0) = -(1-k_0^2) < 0.
	\end{equation*}
	Using these results, the implicit function theorem proves the existence of a smooth function $k(c)$ with $G(k(c),c) = 0$ and
	\begin{align*}
		k(0) = k_0, \quad k'(0)=0, \quad k''(0) = -\frac{\partial_c^2 G(k_0,0)}{\partial_k G(k_0,0)} > 0.
	\end{align*}
	Hence,
	\begin{equation*}
		k'(c) = k''(0)c + \mathcal{O}(c^2) > 0
	\end{equation*}
	for sufficiently small $c>0$. Let $A(k):=a_2(k,\tau_2(k))$ and $B(c):=A(k(c))$. Then,
	\begin{equation*}
		B(c) = \frac{k(c)\sqrt{1-c^2}}{\sqrt{1-{k(c)}^2}\big(1+k(c)c\big)}
	\end{equation*}
	and, since $k'(0)=0$,
	\begin{equation*}
		B'(0) = -\frac{k_0^2}{\sqrt{1-k_0^2}} < 0.
	\end{equation*}
	Therefore, $B'(c) < 0$ for sufficiently small $c>0$ and the chain rule gives
	\begin{equation*}
		A'(k(c)) = \frac{B'(c)}{k'(c)} < 0
	\end{equation*}
	which implies the conclusion.
\end{proof}

\subsection{Free boundary nodoids}
We will denote by 
\begin{equation}\label{eq:N}
	N^\pm(a,b)\vcentcolon=(g_\mathrm{n}^\pm,f_\mathrm{n}^\pm)
\end{equation}
the two nodaries that are symmetric with respect to the $y$-axis, see \eqref{eq:nodary:g}-\eqref{eq:nodary:f}.

\begin{proposition}\label{prp:nodary:system}
	For the nodary $N^\pm(a,b)$, the system \eqref{it:circle}-\eqref{it:tangent} is equivalent to
	\begin{equation*}
		g^\pm(t)\sin(t)=f^\pm(t)\cos(t),\qquad f^\pm(t)=|\sin(t)|. 
	\end{equation*}
\end{proposition}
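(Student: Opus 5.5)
Write $g=g_\mr{n}^\pm$, $f=f_\mr{n}^\pm$ and $\Lambda=\Lambda^\pm$. The plan mirrors the proof of Proposition~\ref{prp:undulary:system}: first rewrite the orthogonality equation \eqref{it:tangent} with the derivative formulas of Proposition~\ref{prp:nodary:derivatives}, then combine the result with \eqref{it:circle}. By items \eqref{it:prp:nodary:derivatives:g} and \eqref{it:prp:nodary:derivatives:f},
\begin{equation*}
	g'(t)f(t)-g(t)f'(t)=-a\Lambda(t)\bigl(\cos(t)f(t)-\sin(t)g(t)\bigr).
\end{equation*}
By \eqref{eq:prp:nodary:derivatives:sign-Lambda}, $\Lambda$ never vanishes, and $a>0$. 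Hence \eqref{it:tangent} is equivalent to
\begin{equation*}
	g(t)\sin(t)=f(t)\cos(t),
\end{equation*}
which is the first equation of the claim.

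Next I will show that, given this first equation, \eqref{it:circle} is equivalent to $f(t)=|\sin(t)|$. The first equation says the vectors $(g(t),f(t))$ and $(\cos(t),\sin(t))$ are parallel, so $(g(t),f(t))=r(\cos(t),\sin(t))$ for some $r\in\R$. This uses that $(\cos t,\sin t)\neq0$: if $\sin t\neq0$, take $r=f/\sin t$, so $g=f\cos t/\sin t=r\cos t$; if $\sin t=0$, the equation forces $f(t)=0$, and we take $r=\pm g$.

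Then \eqref{it:circle} becomes $r^2=1$, so $f(t)=\pm\sin(t)$. Since $f>0$ (indeed $f_\mr{n}^\pm=Q\mp a\cos\ge Q-a|\cos|>0$ because $b>0$), this is exactly $f(t)=|\sin(t)|$. Conversely, suppose $f(t)=|\sin(t)|$ together with the first equation. Then $\sin(t)\neq0$ because $f>0$, so $g(t)=f(t)\cos(t)/\sin(t)$, and therefore
\begin{equation*}
	g^2+f^2=\sin^2(t)\bigl(\cos^2(t)/\sin^2(t)+1\bigr)=1.
\end{equation*}

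The only delicate point is the nonvanishing of $\Lambda^\pm$. It is stated in \eqref{eq:prp:nodary:derivatives:sign-Lambda} and follows from $|a\cos s|<Q(s)$, again because $b>0$. The remaining step is the case distinction $\sin t=0$ in the second paragraph, which is excluded by positivity of $f$.
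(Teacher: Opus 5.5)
Your proof is correct and is essentially the paper's argument. The paper's proof just cites Proposition~\ref{prp:nodary:derivatives}; you spell out the same steps: the factorization $g'f-gf'=-a\Lambda^\pm\bigl(f\cos(t)-g\sin(t)\bigr)$, the nonvanishing of $\Lambda^\pm$ (which follows from $b>0$), and the use of $f>0$ to turn \eqref{it:circle} into $f=|\sin(t)|$.
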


\begin{proof}
	This is a consequence of Proposition~\ref{prp:nodary:derivatives}.
\end{proof}

\begin{proposition}[Characterizing orthogonal intersections of nodary and circle]\label{prp:nodary:intersection}
    Given $a,t>0$, there exists $b>0$ such that $N^\pm(a,b)$ intersects the unit circle orthogonally at $t$ if and only if 
    \begin{equation}\label{eq:prp:nodary:intersection:a}
    	\sin(t)\neq 0,\qquad|\sin(t)|\pm a \cos(t)>0
    \end{equation}
	and, denoting with $\sigma(t)$ the sign of $\sin(t)$,
    \begin{align*}
        0 &=  \sigma(t)\cos(t) \mp a\sin(t) + \int_0^t \frac{a^2\cos^2(x) \, \mr d x}{\sqrt{\sin^2(t) \pm 2a|\sin(t)|\cos(t)+a^2\cos^2(x)}}\\
        &=\vcentcolon G^\pm(a,t).
    \end{align*}
	In this case,
	\begin{equation}\label{eq:prp:nodary:intersection:b}
		b=\sqrt{\sin^2(t)\pm 2a|\sin(t)|\cos(t)}.
	\end{equation}
\end{proposition}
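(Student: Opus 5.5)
The plan is to reduce everything to Proposition~\ref{prp:nodary:system} and then substitute the explicit formulas for the nodary from \eqref{eq:nodary:f} and Proposition~\ref{prp:nodary:derivatives}\eqref{it:prp:nodary:derivatives}. Fix $a,t>0$. By Proposition~\ref{prp:nodary:system}, $N^\pm(a,b)$ meets the unit circle orthogonally at $t$ if and only if
\begin{equation*}
	f_\mr{n}^\pm(t)=|\sin(t)|,\qquad g_\mr{n}^\pm(t)\sin(t)=f_\mr{n}^\pm(t)\cos(t).
\end{equation*}
I treat the first equation first, since it determines $b$, and then insert the result into the second equation.

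\emph{The first equation and the formula for $b$.} Write $Q(s)=\sqrt{b^2+a^2\cos^2(s)}$. Since $b>0$, we have $Q>a|\cos|$, so $f_\mr{n}^\pm=Q\mp a\cos>0$ everywhere. Hence $f_\mr{n}^\pm(t)=|\sin(t)|$ forces $\sin(t)\neq0$. The equation itself reads $Q(t)=|\sin(t)|\pm a\cos(t)$. Because $Q(t)>0$, this requires $|\sin(t)|\pm a\cos(t)>0$, which gives \eqref{eq:prp:nodary:intersection:a}. Squaring, and using that squaring is reversible when both sides are positive, the equation becomes
\begin{equation*}
	b^2=\sin^2(t)\pm2a|\sin(t)|\cos(t),
\end{equation*}
which is \eqref{eq:prp:nodary:intersection:b}. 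Conversely, if \eqref{eq:prp:nodary:intersection:a} holds and the right-hand side above is positive, this defines a unique $b>0$ satisfying the first equation.

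\emph{The second equation and the function $G^\pm$.} Since $\sin(t)\neq0$ and $f_\mr{n}^\pm(t)=|\sin(t)|$, the second equation is equivalent to $g_\mr{n}^\pm(t)=\sigma(t)\cos(t)$. By Proposition~\ref{prp:nodary:derivatives}\eqref{it:prp:nodary:derivatives},
\begin{equation*}
	g_\mr{n}^\pm(t)=-a^2\int_0^t\frac{\cos^2(x)}{Q(x)}\,\mr dx\pm a\sin(t).
\end{equation*}
Substituting $b^2$ from \eqref{eq:prp:nodary:intersection:b} into $Q(x)$ turns the identity $g_\mr{n}^\pm(t)=\sigma(t)\cos(t)$ into exactly $G^\pm(a,t)=0$. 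Each step is an equivalence, so both directions follow together with the uniqueness of $b$.

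\emph{Main obstacle: positivity of $b^2$.} The delicate point is that \eqref{eq:prp:nodary:intersection:a} by itself does not force $\sin^2(t)\pm2a|\sin(t)|\cos(t)>0$. For example, in the ``$+$'' case with $\cos(t)<0$ and $a|\cos(t)|<|\sin(t)|<2a|\cos(t)$, the condition holds but this quantity is negative. I would resolve this by reading the statement $G^\pm(a,t)=0$ as including that its integrand is well defined, that is, that the radicand $b^2+a^2\cos^2(x)$ is positive on $[0,t]$. For $t\ge\pi/2$ the radicand at $x=\pi/2$ equals the candidate $b^2$, so well-definedness is precisely $b^2>0$. For $t<\pi/2$ I would check directly which cases can fail: in the ``$+$'' case $\cos(t)>0$ and positivity is automatic, while the ``$-$'' case needs the explicit requirement. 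I would make this implicit condition explicit in the proof, or strengthen the second inequality in \eqref{eq:prp:nodary:intersection:a} accordingly, so that the equivalence holds as stated.
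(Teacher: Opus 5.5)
Your proposal is correct and is the paper's own argument: apply Proposition~\ref{prp:nodary:system}, obtain \eqref{eq:prp:nodary:intersection:a} and \eqref{eq:prp:nodary:intersection:b} from $f_\mr{n}^\pm(t)=|\sin(t)|$ using $f_\mr{n}^\pm>0$ and squaring, then substitute into $g_\mr{n}^\pm(t)=\sigma(t)\cos(t)$ via Proposition~\ref{prp:nodary:derivatives}\,\eqref{it:prp:nodary:derivatives}. The positivity of $\sin^2(t)\pm2a|\sin(t)|\cos(t)$ that you flag is indeed skipped in the paper's one-line proof, and under your reading (the radicand must be positive on $[0,t]$) the one remaining case, the minus sign with $t\in(0,\pi/2)$, causes no trouble: there every term of $G^-(a,t)$ is positive, so $G^-$ has no zero and no extra hypothesis is needed.
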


\begin{proof}
	Since $f^\pm>0$, squaring the second equation of Proposition~\ref{prp:nodary:system} implies \eqref{eq:prp:nodary:intersection:a} and \eqref{eq:prp:nodary:intersection:b}. Propositions \ref{prp:nodary:system} and \ref{prp:nodary:derivatives}\,\eqref{it:prp:nodary:derivatives} together with \eqref{eq:prp:nodary:intersection:b} imply $0=G^\pm(a,t)$.
\end{proof}

\begin{lemma}\label{lem:nodoid}
	Let $G^\pm$ be as in Proposition~\ref{prp:nodary:intersection}. For all $a>0$ and $t\in(0,\pi)$, define
	\begin{align*}
		A^\pm(a,t)&\vcentcolon=\sin(t)\pm a\cos(t),\\
		B^\pm(a,t)&\vcentcolon=\sin^2(t)\pm 2a\sin(t)\cos(t),\\ 
		C(a,t)&\vcentcolon=a^2\cos^2(t)
	\end{align*}	
	and, for all positive integers $n$,
	\begin{equation*}
		I_n^\pm(a,t)\vcentcolon=\int_0^t\frac{C(a,x)\,\mr dx}{(B^\pm(a,t)+C(a,x))^\frac{n}{2}}.
	\end{equation*}
	Then,
	\begin{align*}
		\partial_tA^\pm(a,t)&=\cos(t)\mp a\sin(t),\\
		\partial_t^2A^\pm(a,t)&=-A^\pm(a,t), \\
		\partial_tB^\pm(a,t)&=2\sin(t)\cos(t)\pm 2a\bigl(\cos^2(t)-\sin^2(t)\bigr)\\ 
		\partial_t^2B^\pm(a,t)&=2\bigl(\cos^2(t)-\sin^2(t)\bigr)\mp8a\sin(t)\cos(t),
	\end{align*}
	and
	\begin{align} \nonumber
		G^\pm&=\partial_t A^\pm+I_1^\pm, \\ \label{eq:lem:nodoid:partialG}
		\partial_tG^\pm&=-A^\pm + \frac{C}{A^\pm} -\frac{1}{2}\partial_tB^\pm I_3^\pm=-\frac{B^\pm}{A^\pm}-\frac{1}{2}\partial_tB^\pm I_3^\pm, \\ \label{eq:lem:nodoid:ppartialG-no_frac} 
		\partial_t^2G^\pm &=-\partial_tA^\pm + \frac{\partial_tC}{A^\pm} - \frac{C\partial_tA^\pm}{(A^\pm)^2}-\frac{1}{2}\partial_t^2B^\pm I_3^\pm-\frac{C\partial_tB^\pm}{2(A^\pm)^3} + \frac{3}{4}(\partial_tB^\pm)^2I_5^\pm \\ \label{eq:lem:nodoid:ppartialG}
		&=-\frac{\partial_t B^\pm}{A^\pm} + \frac{B^\pm\partial_tA^\pm}{(A^\pm)^2}-\frac{1}{2}\partial_t^2B^\pm I_3^\pm-\frac{C\partial_tB^\pm}{2(A^\pm)^3} + \frac{3}{4}(\partial_tB^\pm)^2I_5^\pm.
	\end{align}
\end{lemma}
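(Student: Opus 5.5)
Everything in Lemma~\ref{lem:nodoid} is a direct computation; the plan is to verify the identities one at a time. On $(0,\pi)$ we have $\sigma(t)=1$ and $|\sin(t)|=\sin(t)$, so the expression $G^\pm$ from Proposition~\ref{prp:nodary:intersection} becomes
\begin{equation*}
G^\pm(a,t)=\cos(t)\mp a\sin(t)+\int_0^t\frac{C(a,x)\,\mr dx}{(B^\pm(a,t)+C(a,x))^{1/2}}.
\end{equation*}
This is $\partial_tA^\pm+I_1^\pm$, once the formulas for $\partial_tA^\pm$ are in place. The derivatives of $A^\pm$ and $B^\pm$ are elementary trigonometric differentiations. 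In particular, $A^\pm$ solves $y''=-y$.

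The substantive step is differentiating $I_1^\pm$, where $t$ occurs both as the upper limit and inside the integrand through $B^\pm(a,t)$. By the Leibniz rule,
\begin{equation*}
\partial_tI_n^\pm=\frac{C(a,t)}{(B^\pm(a,t)+C(a,t))^{n/2}}-\frac n2\,\partial_tB^\pm\, I_{n+2}^\pm .
\end{equation*}
The key observation is the identity
\begin{equation*}
B^\pm+C=\sin^2 t\pm2a\sin t\cos t+a^2\cos^2 t=(A^\pm)^2 .
\end{equation*}
We need $A^\pm>0$ for the square root to equal $A^\pm$. This holds on the relevant region by \eqref{eq:prp:nodary:intersection:a}, since $A^\pm(a,t)=|\sin t|\pm a\cos t$ there. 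We therefore assume $A^\pm>0$, which is also needed for $I_n^\pm$ to be well-defined. Under this assumption the boundary term equals $C/(A^\pm)^n$.

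With $n=1$ this gives $\partial_tG^\pm=\partial_t^2A^\pm+C/A^\pm-\tfrac12\partial_tB^\pm I_3^\pm$. Using $\partial_t^2A^\pm=-A^\pm$ and $-A^\pm+C/A^\pm=-(A^\pm)^{2}/A^\pm+C/A^\pm=-B^\pm/A^\pm$, we obtain \eqref{eq:lem:nodoid:partialG}.

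Next we differentiate \eqref{eq:lem:nodoid:partialG} once more. The quotient rule on $-A^\pm+C/A^\pm$ produces $-\partial_tA^\pm+\partial_tC/A^\pm-C\partial_tA^\pm/(A^\pm)^2$. The product $-\tfrac12\partial_tB^\pm I_3^\pm$ contributes $-\tfrac12\partial_t^2B^\pm I_3^\pm$ together with $-\tfrac12\partial_tB^\pm\partial_tI_3^\pm$. Applying the Leibniz formula with $n=3$ expands the latter into $-\tfrac12\partial_tB^\pm\,C/(A^\pm)^3+\tfrac34(\partial_tB^\pm)^2I_5^\pm$. Collecting terms gives \eqref{eq:lem:nodoid:ppartialG-no_frac}.

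For \eqref{eq:lem:nodoid:ppartialG}, we differentiate $-B^\pm/A^\pm$ directly instead, which gives $-\partial_tB^\pm/A^\pm+B^\pm\partial_tA^\pm/(A^\pm)^2$. The remaining terms are the same as before. The two forms agree because $\partial_tC=2A^\pm\partial_tA^\pm-\partial_tB^\pm$, which follows from differentiating $C=(A^\pm)^2-B^\pm$.

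The only real care point is the Leibniz step with $t$-dependence inside the integrand, together with the identification of the boundary term through $B^\pm+C=(A^\pm)^2$. That identity holds only under the standing positivity $A^\pm>0$, so we restrict the statement to the subdomain where $A^\pm>0$ and the integrals are defined. Everything else is bookkeeping.
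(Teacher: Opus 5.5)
Your proposal is correct and is exactly the computation the paper leaves to the reader (its proof reads only ``a straight forward computation''): the Leibniz rule for $\partial_t I_n^\pm$ together with the identity $B^\pm+C=(A^\pm)^2$ yields all the stated formulas. Restricting to the region where $A^\pm>0$ and the integrals converge is a sensible precision that the statement's ``for all $t\in(0,\pi)$'' glosses over; note, however, that $A^\pm>0$ is not what makes $I_3^\pm,I_5^\pm$ finite. For $t>\pi/2$ finiteness requires $B^\pm(a,t)>0$, which in turn implies $A^\pm>0$ because $A^\pm=\tfrac12\bigl(\sin t+(\sin t\pm 2a\cos t)\bigr)$, so your final formulation, which imposes both conditions, is the right one.
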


\begin{proof}
	This is a straight forward computation.
\end{proof}

\begin{lemma}\label{lem:nodoid:plus}
	Let $G^+$ be as in Proposition \ref{prp:nodary:intersection} and for all $a>0$, set
	\begin{equation*}
		g(a)\vcentcolon=G^+\Bigl(a,\frac{\pi}{2}\Bigr).
	\end{equation*}
	Then,
	\begin{equation}\label{eq:lem:nodoid:plus:g}
		g(a)=-a - \frac{K(\frac{a}{\sqrt{1+a^2}})}{\sqrt{1+a^2}}+\sqrt{1+a^2}E\Bigl(\frac{a}{\sqrt{1+a^2}}\Bigr)
	\end{equation}
	and $g(a)<0$.
\end{lemma}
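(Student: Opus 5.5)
The identity \eqref{eq:lem:nodoid:plus:g} follows by evaluating the definition of $G^+$ from Proposition~\ref{prp:nodary:intersection} at $t=\frac{\pi}{2}$ and rewriting the integral with the substitution $k=a/\sqrt{1+a^2}$. The sign is a pointwise comparison of integrands and does not need the elliptic-integral form at all.

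\emph{Evaluation at $t=\frac{\pi}{2}$.} Here $\sin(t)=1$, $\sigma(t)=1$ and $\cos(t)=0$. The radicand in the definition of $G^+$ is therefore $1+a^2\cos^2(x)$, and
\begin{equation*}
	g(a)=-a+\int_0^{\pi/2}\frac{a^2\cos^2(x)}{\sqrt{1+a^2\cos^2(x)}}\,\mr dx .
\end{equation*}

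\emph{Elliptic form.} Write $a^2\cos^2(x)=(1+a^2\cos^2(x))-1$, which splits the integral into
\begin{equation*}
	\int_0^{\pi/2}\sqrt{1+a^2\cos^2(x)}\,\mr dx-\int_0^{\pi/2}\frac{\mr dx}{\sqrt{1+a^2\cos^2(x)}}.
\end{equation*}
Next use $1+a^2\cos^2(x)=(1+a^2)-a^2\sin^2(x)=(1+a^2)\bigl(1-k^2\sin^2(x)\bigr)$ with $k=a/\sqrt{1+a^2}\in(0,1)$. By Definition~\ref{def:elliptic_integral}, the first integral equals $\sqrt{1+a^2}\,E(k)$ and the second equals $K(k)/\sqrt{1+a^2}$. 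This gives \eqref{eq:lem:nodoid:plus:g}.

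\emph{Sign.} For $x\in[0,\frac{\pi}{2})$ we have $a\cos(x)>0$, so $0<a\cos(x)/\sqrt{1+a^2\cos^2(x)}<1$. Multiplying by $a\cos(x)$ gives
\begin{equation*}
	\frac{a^2\cos^2(x)}{\sqrt{1+a^2\cos^2(x)}}<a\cos(x).
\end{equation*}
This strict inequality holds on $[0,\frac{\pi}{2})$, a set of positive measure. Integrating over $[0,\frac{\pi}{2}]$ yields
\begin{equation*}
	\int_0^{\pi/2}\frac{a^2\cos^2(x)}{\sqrt{1+a^2\cos^2(x)}}\,\mr dx<a\int_0^{\pi/2}\cos(x)\,\mr dx=a,
\end{equation*}
hence $g(a)<0$.

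There is no real obstacle in this argument. The only point needing care is that the radicand in $G^+$ depends on $t$ only through $\sin^2(t)\pm 2a|\sin(t)|\cos(t)$, which reduces to $1$ at $t=\frac{\pi}{2}$.
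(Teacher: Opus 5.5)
Your proof is correct. The derivation of \eqref{eq:lem:nodoid:plus:g} matches the paper's: you evaluate at $t=\frac{\pi}{2}$, split $a^2\cos^2(x)=(1+a^2\cos^2(x))-1$, and use $1+a^2\cos^2(x)=(1+a^2)(1-k^2\sin^2(x))$ with $k=a/\sqrt{1+a^2}$. For the sign, however, you take a genuinely different and much shorter route. The paper proves $g<0$ by studying $g$ as a function of $a$. It computes $g'(a)=kE(k)-1$ (with $k=a/\sqrt{1+a^2}$) and identifies this with $h_0^+$ from Lemma~\ref{lem:unduloid}. From the analysis of $h_0^+$ (unique zero, strict maximum at $\kappa_0$, value $0$ at $k=1$) it deduces that $g$ strictly decreases and then strictly increases. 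It then combines this with $\lim_{a\searrow0}g(a)=\lim_{a\to\infty}g(a)=0$. Your pointwise bound $a^2\cos^2(x)/\sqrt{1+a^2\cos^2(x)}<a\cos(x)$ on $[0,\frac{\pi}{2})$, integrated against $\int_0^{\pi/2}\cos(x)\,\mr dx=1$, gives $g(a)<0$ directly for each fixed $a$. It needs neither the derivative formulas for $K$ and $E$, nor the unduloid lemma, nor the limit of $g$ at infinity, which the paper only asserts as an elementary computation. In fact your sign argument does not use the elliptic form at all. What the paper's argument buys in return is global information on the shape of $g$: a unique strict minimum at $a_0=k_0/\sqrt{1-k_0^2}$ (the same constant as in \eqref{def:a_l}) and decay to $0$ at both ends. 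Theorem~\ref{thm:nodoid:plus} only uses $G^+(a,\frac{\pi}{2})<0$, so your argument fully suffices there.
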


\begin{proof}
	We have
	\begin{align*}
		g(a)&=-a+\int_0^\frac{\pi}{2}\frac{a^2\cos^2(x)\,\mr dx}{\sqrt{1+a^2\cos^2(x)}}\\
		&=-a-\int_0^\frac{\pi}{2}\frac{\mr dx}{\sqrt{1+a^2-a^2\sin^2(x)}} + \int_0^\frac{\pi}{2}\frac{1+a^2-a^2\sin^2(x)}{\sqrt{1+a^2-a^2\sin^2(x)}}\mr dx
	\end{align*}
	which by Definition \ref{def:elliptic_integral} implies \eqref{eq:lem:nodoid:plus:g}. Recalling Remark \ref{rem:def:elliptic_integral}, it is a straight forward computation to see that
	\begin{equation*}
		g'(a)=-1+kE(k)\qquad\text{for $k=\frac{a}{1+a^2}$}.
	\end{equation*}
	Notice that $g'$ as a function of $k$ coincides with $h_0^+$ from \eqref{eq:lem:unduloid:h}. Using \eqref{eq:lem:unduloid:h-prime}, \eqref{eq:lem:unduloid:h-pprime}, and \eqref{eq:lem:unduloid:2E=K}, we have already seen that $g'$ has a unique zero at some $a_0>0$ at which $g$ attains a strict minimum. An elementary computation shows that
	\begin{equation*}
		\lim_{a\searrow0}g(a)=0,\qquad \lim_{a\nearrow\infty}g(a)=0. 
	\end{equation*}
	It follows that $g$ is strictly negative.
\end{proof}

\begin{theorem}[Symmetric free boundary nodoids in the unit ball]\label{thm:nodoid:plus}
	For all $a>0$, there exists a unique $t_0\in(0,\pi/2)$ such that for $b=(\sin^2(t_0)+2a\sin(t_0)\cos(t_0))^{1/2}$, the nodary $N^+(a,b)$ as defined in \eqref{eq:N} restricted to the interval $(-t_0,t_0)$ lies inside the unit ball and meets the unit sphere orthogonally at $t_0$.
\end{theorem}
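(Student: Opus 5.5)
The plan is to apply Proposition~\ref{prp:nodary:intersection} with the upper sign and show that, for fixed $a>0$, the function $G^+(a,\cdot)$ has exactly one zero in $(0,\pi/2)$. For $t\in(0,\pi/2)$ we have $\sin t>0$ and $\sin t+a\cos t>0$, so condition \eqref{eq:prp:nodary:intersection:a} holds automatically. Moreover $B^+(a,t)=\sin^2 t+2a\sin t\cos t>0$, so $b$ from \eqref{eq:prp:nodary:intersection:b} is a well-defined positive number. Hence zeros of $G^+(a,\cdot)$ in $(0,\pi/2)$ correspond exactly to orthogonal intersections with the unit circle.

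\emph{Existence.} As $t\searrow0$ we have $\sigma(t)\cos t\to1$, $a\sin t\to0$, and the integral term tends to $0$, since its integrand is bounded by $a\cos x$ and the interval of integration shrinks. Thus $G^+(a,t)\to1>0$. At $t=\pi/2$, Lemma~\ref{lem:nodoid:plus} gives $G^+(a,\pi/2)=g(a)<0$. By continuity and the intermediate value theorem, some $t_0\in(0,\pi/2)$ satisfies $G^+(a,t_0)=0$.

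\emph{Uniqueness.} I would show that $\partial_tG^+<0$ at every zero of $G^+(a,\cdot)$ in $(0,\pi/2)$. Then all zeros are transversal down-crossings, which forces the zero to be unique. By \eqref{eq:lem:nodoid:partialG},
\begin{equation*}
\partial_tG^+=-\frac{B^+}{A^+}-\frac12\partial_tB^+\,I_3^+ .
\end{equation*}
Where $\partial_tB^+\ge0$ both terms are nonpositive and the first is strictly negative, so there is nothing to prove. Where $\partial_tB^+<0$, I would use the elementary bound $(B^++C)^{-3/2}\le (B^++C)^{-1/2}/B^+$, which gives $I_3^+\le I_1^+/B^+$. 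At a zero we have $I_1^+=-\partial_tA^+=a\sin t-\cos t$; in particular $a\sin t>\cos t$ there, since $I_1^+>0$. It then remains to verify the trigonometric inequality
\begin{equation*}
2(B^+)^2>-\partial_tB^+\,A^+\,(a\sin t-\cos t),
\end{equation*}
with $\partial_tB^+=2\sin t\cos t+2a(\cos^2t-\sin^2t)$. After dividing by $\sin t$ this becomes a polynomial inequality in $\sin t,\cos t,a$ on the region $a\sin t>\cos t$. I expect this verification to be the main obstacle. If the crude bound on $I_3^+$ turns out too lossy, the fallback is to use $C(a,x)\le a^2$ more carefully, or to differentiate $(A^+)^{-1}G^+$ instead and look for a monotone quantity.

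\emph{Inside the ball.} Finally I would check that $N^+(a,b)$ restricted to $(-t_0,t_0)$ stays inside the unit disk. By symmetry it suffices to consider $(0,t_0)$. By Proposition~\ref{prp:nodary:derivatives},
\begin{equation*}
\tfrac12\tfrac{\d}{\d s}\bigl(g^2+f^2\bigr)=-a\Lambda^+(s)\bigl(g(s)\cos s+f(s)\sin s\bigr), \qquad -a\Lambda^+>0.
\end{equation*}
So it suffices to show $g\cos s+f\sin s>0$ on $(0,t_0)$; then $g^2+f^2$ increases to $1$ at $t_0$ and is therefore less than $1$ before. I would try to prove this sign statement by differentiating $g\cos s+f\sin s$ once more. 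Alternatively, I would argue that a first interior touching of the unit circle would produce another point where $G^+$ vanishes, contradicting uniqueness.
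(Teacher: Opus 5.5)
Your existence argument is the same as the paper's, and reducing uniqueness to ``$\partial_tG^+<0$ at every zero'' is logically sound. The problem is that the inequality you then need, $2(B^+)^2>-\partial_tB^+\,A^+\,(a\sin t-\cos t)$, is false, so the uniqueness step does not go through.

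\begin{itemize}
\item As $t\nearrow\pi/2$ the inequality tends to $2\ge 2a^2$, so it already fails near $\pi/2$ for $a>1$.
\item More seriously, it fails at the actual zeros when $a$ is large. For fixed $t\in(\pi/4,\pi/2)$ your left side is $8a^2\sin^2t\cos^2t+O(a)$, while the right side is $2a^3\sin t\cos t(\sin^2t-\cos^2t)+O(a^2)$.
\item The zero of $G^+(a,\cdot)$ converges to the catenoid value $t\approx0.985>\pi/4$, since $G^+(a,t)\to\cos t\,\bigl(1-\sin t\log(\sec t+\tan t)\bigr)$ as $a\to\infty$.
\item Concretely, for $a=10$ one has $G^+(10,1)>0>G^+(10,1.05)$. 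At the zero $t_0\approx1.00$ the left side is about $190$ and the right side about $370$.
\end{itemize}

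The culprit is the bound $I_3^+\le I_1^+/B^+$. It discards $C(a,x)=a^2\cos^2x$, which is of order $a^2$, while $B^+$ is only of order $a$. At $a=10$, $t=1$ one has $I_3^+\approx0.10$ but $I_1^+/B^+\approx0.80$. Your upper bound for $\partial_tG^+$ is therefore about $+1.4$, although the true value is about $-1.2$.

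Your fallback does not repair this either. Using $C\le a^2$ goes the wrong way for an upper bound on $I_3^+$. The sharper comparison $B^++C(a,x)\ge B^++C(a,t)=(A^+)^2$ gives $I_3^+\le I_1^+/(A^+)^2$, which leads to $-\partial_tB^+\,(a\sin t-\cos t)<2A^+B^+$. This again fails near $t=\pi/2$ for $a>1$, so you would still need an a priori localization of the zero, which you do not have.

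The idea you are missing is to work at critical points rather than at zeros, and to use the critical-point equation itself to remove the problematic terms.
\begin{itemize}
\item If $\partial_tG^+(a,t_c)=0$, then $B/A=-\frac12B'I_3$.
\item Substituting this for the occurrences of $1/A$ in \eqref{eq:lem:nodoid:ppartialG} gives $\partial_t^2G^+=P\,\frac{I_3}{2A^2B}+\frac34(B')^2I_5$, with $P=A^2(B')^2-AA'BB'-A^2BB''+\frac12(B')^2C$.
\item $P$ expands to a polynomial in $a,\sin t_c,\cos t_c$ with nonnegative coefficients, so it is positive on $(0,\pi/2)$.
\item Hence every critical point is a strict local minimum, there is at most one, and together with $G^+(a,0^+)=1$ and $G^+(a,\pi/2)<0$ the zero is unique.
\end{itemize}
No comparison between $I_1^+$ and $I_3^+$ is needed, which is why this works uniformly in $a$.

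Your containment reduction via $\frac{\d}{\d s}(g^2+f^2)$ is fine and closes in one line, which the paper does not spell out. One has $g^+\cos s+f^+\sin s=Q(s)\sin s-a^2\cos s\int_0^s\frac{\cos^2x}{Q(x)}\,\d x$. Since $Q(x)>a\cos x$, this exceeds $\sin s\,f^+(s)>0$ on $(0,\pi/2)$. Your alternative ``first touching'' argument would not work: a crossing of the unit circle need not be orthogonal, so it need not be a zero of $G^+$.
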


\begin{proof}
	Since $\lim_{t\searrow0}G^+(a,t)=1$, existence is a consequence of Proposition~\ref{prp:nodary:intersection}, Lemma~\ref{lem:nodoid:plus}, and the intermediate value theorem. To prove uniqueness, it will first be shown that every critical point of $G^+(a,\cdot)$ on $(0,\pi/2)$ is a strict minimum. To this end, assume that $G^+(a,\cdot)$ has a critical point at $t_\mr{c}\in(0,\pi/2)$. Using the notation of Lemma~\ref{lem:nodoid}, we abbreviate
	\begin{gather*}
		A\vcentcolon=A^+(a,t_\mr{c}),\qquad A'\vcentcolon=\partial_tA^+(a,t_\mr{c}),\qquad B\vcentcolon=B^+(a,t_\mr{c}), \\
		B'\vcentcolon=\partial_tB^+(a,t_\mr{c}),\qquad B''\vcentcolon=\partial_t^2B^+(a,t_\mr{c}),\qquad C\vcentcolon=C(a,t_\mr{c}),\\ 
		I_3\vcentcolon=I_3^+(a,t_\mr{c}), \qquad I_5\vcentcolon=I_5^+(a,t_\mr{c}),\qquad G''\vcentcolon=\partial_t^2G^+(a,t_\mr{c}).
	\end{gather*}
	In view of \eqref{eq:lem:nodoid:partialG}, $t_\mr{c}$ is characterized by the equation
	\begin{equation}\label{eq:proof:thm:nodoid:plus}
		\frac{B}{A}=-\frac{1}{2}B'I_3.
	\end{equation}
	Let
	\begin{equation*}
		P\vcentcolon=A^2(B')^2-AA'BB'-A^2BB''+\frac{1}{2}(B')^2C.
	\end{equation*}
	Then, plugging \eqref{eq:proof:thm:nodoid:plus} into \eqref{eq:lem:nodoid:ppartialG}, we infer
	\begin{align*}
		G''&=\frac{(B')^2I_3}{2B}-\frac{A'B'I_3}{2A}-\frac{B''I_3}{2}+\frac{(B')^2CI_3}{4A^2B}+\frac{3(B')^2I_5}{4} \\
		&=P\frac{I_3}{2A^2B}+\frac{3(B')^2I_5}{4}.
	\end{align*}
	Thus, in order to show that $G^+(a,\cdot)$ has a strict minimum at $t_\mr{c}$, it is enough to prove that $P>0$. Indeed, an elementary but somewhat lengthy computation gives
	\begin{align*}
		P&=2\sin^6(t_\mr{c}) + 12a\sin^5(t_\mr{c})\cos(t_\mr{c})\\
		&\quad+2a^2\sin^2(t_\mr{c})\bigl(\sin^4(t_\mr{c})+16\sin^2(t_\mr{c})\cos^2(t_\mr{c})+3\cos^4(t_\mr{c})\bigr)\\
		&\quad+2a^3\sin(t_\mr{c})\cos(t_\mr{c})\bigl(\sin^4(t_\mr{c})+15\sin^2(t_\mr{c})\cos^2(t_\mr{c})+6\cos^4(t_\mr{c})\bigr)\\
		&\quad+2a^4\cos^2(t_\mr{c})\bigl(\sin^4(t_\mr{c})+4\sin^2(t_\mr{c})\cos^2(t_\mr{c})+3\cos^4(t_\mr{c})\bigr)
	\end{align*}
	which is strictly positive since $t_\mr{c}\in(0,\pi/2)$. Hence, every zero of $\partial_tG^+(a,\cdot)$ is a strict minimum of $G^+(a,\cdot)$ and the mountain pass theorem implies that $\partial_tG^+(a,\cdot)$ has at most one zero. Since $G^+(a,0)=1$ and $G^+(a,\pi/2)<0$, the function $G^+(a,\cdot)$ has exactly one zero in $(0,\pi/2)$. 
\end{proof}

\begin{remark*}
	Note that even though we have shown $\partial_t^2G^+(a,t)>0$ at every critical point $t$ of $G^+(a,\cdot)$, there exist $a>0$ such that $G^+(a,\cdot)$ is not convex on $(\pi/4,\pi/2)$.
\end{remark*}

\begin{lemma}\label{lem:nodoid:minus}
    Let $G^-$ be as in Proposition~\ref{prp:nodary:intersection}. Then,	
    \begin{enumerate} \upshape
        \item \label{it:lem:properties_G:partial_aG} $\partial_a G^-(a, t) > 0$ for all $a>0$ and $t \in (\frac{\pi}{2}+\arctan(a),\pi)$;
        \item\label{it:lem:properties_G:ppartial_aG} $\partial_t^2 G^-(a, t) > 0$ for all $a>0$ and $t \in (\frac{\pi}{2}+\arctan(a),\pi)$;
        \item \label{it:lem:properties_G:t_0} for all $0<a<1$ there exists a unique $t_0\in (\frac{\pi}{2}+\arctan(a),\pi)$ such that $\partial_t G^-(a,t_0)=0$; 
        \item \label{it:lem:properties_G:a} for $a \geq 1$ there exists no $t_0 \in (\frac{\pi}{2}+\arctan(a), \pi)$ with $G^-(a,t_0)=0$.
    \end{enumerate}
\end{lemma}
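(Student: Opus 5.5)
The four items will be treated separately. Items (1) and (2) come from direct sign analysis using the formulas of Lemma~\ref{lem:nodoid}. Items (3) and (4) then follow from (1), (2) and an evaluation of $G^-$ and $\partial_tG^-$ at the two endpoints of the interval
\[
t_a\vcentcolon=\tfrac{\pi}{2}+\arctan(a)<t<\pi .
\]
On this interval we have $\sin t>0$, $\cos t<0$ and $|\cos t|>a\sin t$. Consequently
\[
A^->0,\qquad B^-=\sin^2t-2a\sin t\cos t>0,\qquad \partial_tA^-=\cos t+a\sin t<0 .
\]
These sign facts will be used throughout.

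For (1), write $G^-=\cos t+a\sin t+I_1^-$. The first part contributes $\partial_a(\cos t+a\sin t)=\sin t>0$. Differentiating $I_1^-$ under the integral, with $\partial_aB^-=-2\sin t\cos t$ and $\partial_aC=2a\cos^2x$, the integrand becomes
\[
\frac{a\cos^2(x)\bigl(2\sin^2t-3a\sin t\cos t+a^2\cos^2x\bigr)}{(B^-+C)^{3/2}}.
\]
This is positive because $\cos t<0$. Hence $\partial_aG^->0$, which is a routine computation.

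For (2), start from \eqref{eq:lem:nodoid:ppartialG} and record the signs of $\partial_tB^-=2\sin t\cos t-2a(\cos^2t-\sin^2t)$ and $\partial_t^2B^-$ on the interval. The terms $\tfrac34(\partial_tB^-)^2I_5^-$ and $B^-\partial_tA^-/(A^-)^2$ have fixed signs; the latter is negative. The plan is to group the indefinite terms as in the proof of Theorem~\ref{thm:nodoid:plus}. Concretely, use the estimate $I_5^-\ge I_3^-/(B^-+a^2)$, or bound $I_3^-$ from below through $\int_0^t C(B^-+C)^{-3/2}\,\d x$ with $C\le a^2$. The goal is to reduce positivity to that of a polynomial $P$ in $\sin t$, $\cos t$ and $a$, and then check that $P$ has only positive coefficients after the substitution $\cos t=-|\cos t|$. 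This is the step I expect to be the main obstacle. Unlike Theorem~\ref{thm:nodoid:plus}, convexity is claimed at every point and not only at critical points, so the identity \eqref{eq:proof:thm:nodoid:plus} is unavailable. If a direct grouping fails, I would first try splitting the integral $I_3^-$ at $x=\pi/2$ and using the symmetry $\cos^2(\pi-x)=\cos^2x$. This gives $I_3^-\ge 2\int_0^{\pi-t}\cdots$, which dominates the negative terms near $t=\pi$ where $I_3^-\to\infty$.

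For (3), the convexity from (2) makes $\partial_tG^-(a,\cdot)$ strictly increasing, so it suffices to find a sign change. As $t\nearrow\pi$ we have $B^-\to0$, $\partial_tB^-\to-2a$ and $I_3^-\to\int_0^\pi\frac{\d x}{a|\cos x|}=\infty$. Therefore $\partial_tG^-\to+\infty$. At $t=t_a$ we have $\sin t_a=(1+a^2)^{-1/2}$ and $\cos t_a=-a(1+a^2)^{-1/2}$, so $A^-=\sqrt{1+a^2}$. The remaining task is to show that $-B^-/A^--\tfrac12\partial_tB^-I_3^-<0$ exactly when $a<1$. For this I would evaluate $I_3^-(a,t_a)$ in closed form via elliptic integrals, as in Lemma~\ref{lem:nodoid:plus}, or bound it explicitly, and compare.

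For (4), let $a\ge1$. By the computation in (3), $\partial_tG^-(a,t_a)\ge0$, so $G^-(a,\cdot)$ is increasing on the interval by (2). It then suffices to show $G^-(a,t_a)\ge0$. Here $\partial_tA^-(a,t_a)=0$, so $G^-(a,t_a)=I_1^-(a,t_a)>0$ immediately. Therefore $G^-$ has no zero, and the case $a\ge1$ is settled.
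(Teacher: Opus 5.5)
Item (1) is fine, but your argument for (4) rests on a false claim. You assert that for $a\ge1$ ``the computation in (3)'' gives $\partial_tG^-(a,t_a)\ge0$, so that $G^-(a,\cdot)$ increases from the positive value $G^-(a,t_a)=I_1^-(a,t_a)$. However, $\partial_tG^-(a,t_a)=-\frac{1+2a^2}{(1+a^2)^{3/2}}+\frac{a^3}{1+a^2}I_3^-(a,t_a)$. The elementary bound $I_3^-(a,t_a)<\pi a^2\bigl(B^-(a,t_a)\bigr)^{-3/2}$ shows this is negative whenever $\pi^2a^{10}(1+a^2)^4<(1+2a^2)^5$. At $a=1$ this reads $16\pi^2<3^5$, so $\partial_tG^-(1,3\pi/4)<0$ (numerically about $-0.9$), and by continuity the same holds for $a$ slightly above $1$. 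Hence your ``exactly when $a<1$'' is wrong. For such $a$, $G^-(a,\cdot)$ decreases to an interior minimum before rising to $G^-(a,\pi)=2a-1$. Positivity at the two endpoints says nothing about the sign of that minimum, and controlling it is the whole content of (4).

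The missing idea is to use item (1), which your argument for (4) never invokes. For $a\ge1$ and $t\in(\frac{\pi}{2}+\arctan a,\pi)$, the point $t$ is admissible for every $a'\in[1,a]$. So $\partial_aG^->0$ gives $G^-(a,t)\ge G^-(1,t)$, and it suffices to prove $G^-(1,t)>0$ on $(3\pi/4,\pi)$. This needs a genuine estimate. The paper bounds $B^-(1,t)+\cos^2x\le2-\sin2t$, which yields $G^-(1,t)>\sin t+\cos t+\bigl(\frac t2+\frac{\sin2t}{4}\bigr)(2-\sin2t)^{-1/2}$, and then verifies the resulting inequality in $u=\sin2t\in(-1,0)$ using $\arcsin u\ge\frac{\pi}{2}u$.

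Separately, your (2) is left open because you start from \eqref{eq:lem:nodoid:ppartialG}, which contains the negative term $B^-\partial_tA^-/(A^-)^2$. Since $B^-=(A^-)^2-C$, you can use \eqref{eq:lem:nodoid:ppartialG-no_frac} instead. There every non-integral term is positive on the interval, because $\partial_tA^-<0$, $\partial_tB^-<0$ and $\partial_tC>0$. Only $-\frac12\partial_t^2B^-I_3^-+\frac34(\partial_tB^-)^2I_5^-$ remains, and no critical-point identity is needed. Your estimate $I_5^-\ge I_3^-/(a^2+B^-)$, with $x=\cot t<0$, then turns this into a polynomial all of whose terms are positive. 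For (3), the same crude bound on $I_3^-(a,t_a)$ already gives $\partial_tG^-(a,t_a)<0$ for $0<a<1$; no elliptic integrals are needed.
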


\begin{proof}
	Let $a>0$ and $t \in (\frac{\pi}{2}+\arctan(a),\pi)$. We have 
	\begin{equation*}
		\partial_aG^-(a,t)= \int_0^t \frac{a\cos^2(x) \left( a^2\cos^2(x)+ 2\sin^2(t) - 3a\sin(t)\cos(t)\right)}{{\left(a^2\cos^2(x)+B^-(t)\right)}^{\frac{3}{2}}} \, \mr d x
	\end{equation*}
	which is strictly positive. Thus, \eqref{it:lem:properties_G:partial_aG} is proven. Note that for $t \in (\frac{\pi}{2}+\arctan(a),\pi)$,
	\begin{equation}\label{eq:lem:nodoid:minus:proof:signs}
		\partial_tA^-(a,t)<0,\qquad\partial_tB^-(a,t)<0,\qquad\partial_tC(a,t)>0.
	\end{equation}
	Abbreviate 
	\begin{gather*}
		B\vcentcolon=B^-(a,t),\qquad B'\vcentcolon=\partial_tB^-(a,t),\qquad B''\vcentcolon=\partial_t^2B^-(a,t),\\
		I_3\vcentcolon=I_3^-(a,t),\qquad I_5\vcentcolon=I_5^-(a,t).
	\end{gather*}
	Then, by \eqref{eq:lem:nodoid:minus:proof:signs} and \eqref{eq:lem:nodoid:ppartialG-no_frac}, in order to prove \eqref{it:lem:properties_G:ppartial_aG}, it is enough to show
	\begin{equation*}
		-\frac{1}{2}B''I_3+\frac{3}{4}(B')^2I_5\geq0.
	\end{equation*}
	Let $x\vcentcolon=\cot(t)$. It holds
	\begin{equation*}
		B = \frac{1-2ax}{1+x^2},\qquad B' = \frac{2x-2ax^2+2a}{1+x^2},\qquad B'' = \frac{2x^2-2+8ax}{1+x^2}.
	\end{equation*}
	Thus, using $I_5\ge I_3/(a^2+B)$, we compute
	\begin{align*}
		&\quad-\frac{1}{2}B''I_3+\frac{3}{4}(B')^2I_5\\
		&\ge \frac{I_3}{4(a^2+B)}\Bigl(-2B''(a^2+B)+3(B')^2\Bigr)\\
		&=\frac{I_3}{(a^2+B)(1+x^2)^2}\Bigl(1+4a^2 - 4a^3x + 2(1+a^2)x^2-4a(1+a^2)x^3+2a^2x^4\Bigr)
	\end{align*}
	which is strictly positive since $x<0$. Thus, \eqref{it:lem:properties_G:ppartial_aG} is proven. Let $\tau=\frac{\pi}{2}+\arctan(a)$. Using
	\begin{equation}\label{eq:lem:nodoid:minus:proof:sin-cos}
		\sin(\tau)=\frac{1}{\sqrt{1+a^2}},\qquad \cos(\tau)=\frac{-a}{\sqrt{1+a^2}}
	\end{equation}
	and
	\begin{equation*}
		I_3^-(a,\tau)<\frac{\pi a^2}{B^-(a,\tau)^\frac32}=\frac{\pi a^2(1+a^2)^\frac32}{(1+2a^2)^\frac32},
	\end{equation*}
	we infer 
	\begin{equation*}
		\partial_tG^-(a,\tau)=-\frac{1+2a^2}{(1+a^2)^\frac32}+\frac{a^3}{1+a^2}I_3^-(a,\tau)<-\frac{1+2a^2}{(1+a^2)^\frac32}+\frac{\pi a^5\sqrt{1+a^2}}{(1+2a^2)^\frac32}.
	\end{equation*}
	After rearranging and squaring, we see that $\partial_tG^-(a,\tau)<0$ provided
	\begin{equation*}
		\pi^2a^{10}(1+a^2)^4<(1+2a^2)^5.
	\end{equation*} 
	Indeed, for $0<a<1$, we have
	\begin{align*}
		&(1+2a^2)^5-\pi^2a^{10}(1+a^2)^4\\
		&>(1+2a^2)^5-\pi^2a^2(1+a^2)^4 \\
		&=1+a^2(10-\pi^2)+a^4(40-4\pi^2)+a^6(80-6\pi^2)+a^8(80-4\pi^2)+a^{10}(32-\pi^2)
	\end{align*}
	which is strictly positive. On the other hand, $\partial_tG^-(a,\pi)=aI_3^-(a,\pi)>0$. Thus, \eqref{it:lem:properties_G:t_0} follows from the intermediate value theorem. By \eqref{it:lem:properties_G:partial_aG}, it is enough to prove \eqref{it:lem:properties_G:a} for $a=1$. To this end, let $t\in(\frac{\pi}{2}+\arctan(1),\pi)=(\frac{3\pi}{4},\pi)$. Then,
	\begin{equation*}
		G^-(1,t) > \sin(t) + \cos(t) + \int_0^t \frac{\cos^2(x)}{\sqrt{2 - \sin(2t)}} \, \d x = \sin(t) + \cos(t) + \frac{\frac{t}{2}+\frac{\sin(2t)}{4}}{\sqrt{2 - \sin(2t)}}.
	\end{equation*}
	Since $\sin(t)+\cos(t)\le0$, we may rearrange and square to see that $G^-(1,t)>0$ provided
	\begin{equation*}
		(\sin(2t) + 2t)^2 > 16(1+\sin(2t))(2 - \sin(2t)).
	\end{equation*}
	Letting $u=\sin(2t) \in (-1,0)$ and noting $2t = 2\pi + \arcsin(u)$, this follows if
	\begin{equation*}
		(2\pi + \arcsin(u) + u)^2 > 16(1+u)(2-u).
	\end{equation*}
	Using that $\arcsin(u) \geq \frac{\pi}{2}u$ for $u \in (-1,0)$, this is indeed the case since
	\begin{equation*}
		{\left(2\pi + \Bigl(1 + \frac{\pi}{2}\Bigr)u\right)}^2 > 16(1+u)(2-u).
	\end{equation*}
	Thus, \eqref{it:lem:properties_G:a} is proven.
\end{proof}

\begin{theorem}[Symmetric free boundary nodoids outside the unit ball]\label{thm:nodoid:minus}
    \leavevmode
    Let $N^-$ be as in \eqref{eq:N}. Then, the following hold.
    \begin{enumerate}\upshape
        \item The set $I \coloneq \{ a>0 \mid \exists \, b>0\colon N^-(a,b) \text{ intersects } \mathbb{S}^1 \text{ orthogonally}\} = (0, \beta]$ for some $\frac{1}{2} < \beta < 1$.
        \item For $a \in (0, \frac{1}{2}] \cup \{\beta\}$, there exists a unique $b>0$ such that $N^-(a,b)$ intersects~$\mathbb{S}^1$ orthogonally.
        \item For $a \in (\frac{1}{2}, \beta)$, there exist exactly two values for $b>0$ such that $N^-(a,b)$ intersects $\mathbb{S}^1$ orthogonally.
        \item \label{it:thm:nodoid:minus:beta} $a = \beta$ if and only if there exists $t_0\in (\frac{\pi}{2}+\arctan(a), \pi)$ with $\partial_t G^-(a,t_0)=G^-(a,t_0)=0$.
    \end{enumerate}
\end{theorem}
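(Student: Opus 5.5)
The plan is to reduce everything to the behaviour of $G^-(a,\cdot)$ on $(\tau_a,\pi)$, where $\tau_a\vcentcolon=\frac{\pi}{2}+\arctan(a)$, and then use Lemma~\ref{lem:nodoid:minus}. By Proposition~\ref{prp:nodary:intersection}, orthogonal intersections of $N^-(a,b)$ with $\mathbb S^1$ at parameter $t$ correspond to zeros of $G^-(a,t)$. Each such $t$ satisfies $\sin t\neq0$ and $|\sin t|-a\cos t>0$, and determines $b=\sqrt{B^-(a,t)}$.

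\textbf{Step 1: localize the zeros.} Using the symmetries of Remark~\ref{rem:nodary}, I first reduce to $t\in(0,\pi)$; the case $\sin t<0$ should be handled by the same reflection argument. For $t\in(0,\pi)$, Lemma~\ref{lem:nodoid} gives
\begin{equation*}
	G^-=\partial_tA^-+I_1^-,\qquad \partial_tA^-(a,t)=\cos t+a\sin t.
\end{equation*}
The term $\partial_tA^-$ is $\ge0$ exactly for $t\le\tau_a$, and $I_1^->0$. Hence $G^->0$ on $(0,\tau_a]$, and all zeros lie in $(\tau_a,\pi)$. Two boundary values will be used:
\begin{equation*}
	G^-(a,\tau_a)=I_1^-(a,\tau_a)>0,\qquad G^-(a,\pi)=-1+\int_0^\pi a|\cos x|\,\d x=2a-1,
\end{equation*}
the second because $B^-(a,\pi)=0$. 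Moreover $\partial_tB^-<0$ on $(\tau_a,\pi)$ by \eqref{eq:lem:nodoid:minus:proof:signs}. So $t\mapsto b=\sqrt{B^-(a,t)}$ is injective there, and counting values of $b$ is the same as counting zeros of $G^-(a,\cdot)$ in $(\tau_a,\pi)$.

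\textbf{Step 2: count zeros via convexity.} By Lemma~\ref{lem:nodoid:minus}\eqref{it:lem:properties_G:ppartial_aG}, $G^-(a,\cdot)$ is strictly convex on $(\tau_a,\pi)$.

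For $a<\frac12$ we have $G^-(a,\tau_a)>0>G^-(a,\pi)$, so there is exactly one zero. For $a=\frac12$, the endpoint $t=\pi$ is excluded because $\sin\pi=0$. Here $G^-(\tfrac12,\pi)=0$ and $\partial_tG^-(\tfrac12,\pi)=aI_3^->0$, so $G^-$ is negative just left of $\pi$, and convexity again gives exactly one zero.

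For $a>\frac12$ both endpoint values are positive. Convexity then gives $0$, $1$ (a double zero, where $\partial_tG^-=0$) or $2$ zeros, according to whether
\begin{equation*}
	m(a)\vcentcolon=\inf_{t\in(\tau_a,\pi)}G^-(a,t)
\end{equation*}
is positive, zero or negative.

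\textbf{Step 3: monotonicity of $m$ and the definition of $\beta$.} For $a<1$, Lemma~\ref{lem:nodoid:minus}\eqref{it:lem:properties_G:t_0} gives an interior critical point, so the infimum is an attained interior minimum. For $a\ge1$, Lemma~\ref{lem:nodoid:minus}\eqref{it:lem:properties_G:a} gives that no zeros exist.

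Next I show $m$ is continuous and strictly increasing on $(0,1)$. As $a$ increases, $G^-$ increases pointwise by Lemma~\ref{lem:nodoid:minus}\eqref{it:lem:properties_G:partial_aG}, and the interval $(\tau_a,\pi)$ shrinks. Both effects raise the minimum value, and continuity follows from joint continuity of $G^-$ together with the interior minimizer. Since $m(\tfrac12)<0$ (from Step 2) and $m>0$ close to $1$ (from \eqref{it:lem:properties_G:a} and continuity), there is a unique $\beta\in(\frac12,1)$ with $m(\beta)=0$.

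This yields (1)–(3), and (4) is exactly the double-zero case. The step I expect to be most delicate is Step 1: verifying that parameters with $\sin t<0$, or outside one period, produce no additional intersections. I would first try the reflection identities of Remark~\ref{rem:nodary} together with the sign function $\sigma(t)$ in $G^-$. The rigorous continuity of $m$ near $a\to1$ is the other point needing care.
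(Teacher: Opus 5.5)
The real problem is Step~1, and it is worse than delicate: the claim you plan to verify is false. Apart from that step, your argument is the paper's. You localize the zeros in $(\tau_a,\pi)$ via the sign of $\cos t+a\sin t$, use $G^-(a,\tau_a)>0$ and $G^-(a,\pi)=2a-1$, and finish with items (i)--(iv) of Lemma~\ref{lem:nodoid:minus}. The details you add that the paper omits are correct: injectivity of $t\mapsto\sqrt{B^-(a,t)}$ from $\partial_tB^-<0$, the borderline case $a=\frac12$, and continuity and strict monotonicity of $m$ via the interior minimiser for the larger parameter. One small correction: $I_3^-(a,\pi)=\infty$, so $\partial_tG^-(\frac12,\pi)>0$ should be read as $\partial_tG^-(a,t)\to+\infty$ as $t\nearrow\pi$.

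The reflection $s\mapsto -s$ only disposes of $t<0$, and for $t>\pi$ there really are further orthogonal intersections. Take $a=0.4$ and $t\in(\frac{3\pi}{2},2\pi-\arctan(2a))$. There $\sigma(t)=-1$ and $B^-=|\sin t|\,(|\sin t|-2a\cos t)>0$, so Proposition~\ref{prp:nodary:intersection} applies.
\begin{itemize}
\item At $t=\frac{3\pi}{2}+\frac12$ one has $b^2\approx0.43$. Using $Q\ge b$ gives $G^-(0.4,t)\le-0.83+0.58<0$.
\item As $t\nearrow2\pi-\arctan(0.8)$ one has $b\to0$ and $G^-(0.4,t)\to-\cos t+a\sin t+a\int_0^t|\cos x|\,\d x\approx0.32>0$.
\end{itemize}
Hence there is a zero in $(\frac{3\pi}{2}+\frac12,2\pi-\arctan(0.8))$, where $b^2$ is decreasing, so $b^2<0.44$. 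This gives a second value of $b$ for which $N^-(0.4,b)$ meets $\mathbb S^1$ orthogonally; the solution in $(\tau_a,\pi)$ has $b^2\approx1.1$. So, read literally over all parameters, the uniqueness in (2) fails.

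Such solutions are excluded by geometry, not by symmetry. The arc $N^-(a,b)|_{(-t,t)}$ then contains $s=\pi$, where $f^-(\pi)=\sqrt{a^2+b^2}-a<0.38$ and $|g^-(\pi)|\le2a=0.8$. That point lies inside the unit disk. The paper's proof builds this into the setting rather than deriving it: the object is a symmetric arc lying outside the unit disk and meeting $\mathbb S^1$ at its endpoints $\pm t$, as in the introduction, so the relevant parameter range is $(\frac{\pi}{2},\pi)$. You should state that restriction explicitly instead of trying to prove there are no other intersections. With it, your Steps~2 and~3 give a complete proof.
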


\begin{remark}
    Based on the characterization of $\beta$ in \eqref{it:thm:nodoid:minus:beta}, a numerical analysis shows that $\beta \approx 0.73714$.
\end{remark}

\begin{proof}[Proof of Theorem \ref{thm:nodoid:minus}]
	Firstly, we note that by the basic properties of the nodary $t\mapsto(g_\mr{n}^-(t),f_\mr{n}^-(t))$ as defined in \eqref{eq:nodary:g}-\eqref{eq:nodary:f}, the relevant interval for the intersection point $t$ is $(\frac{\pi}{2},\pi)$. Secondly, by the definition of $G^-$ in Proposition \ref{prp:nodary:intersection}, there holds $G^-(a,t)=0$ for $a>0$ and $t\in(\frac{\pi}{2},\pi)$ only if $\cos(t)+a\sin(t)<0$ which means $t\in(\frac{\pi}{2}+\arctan(a),\pi)$. In view of \eqref{eq:lem:nodoid:minus:proof:sin-cos}, we have 
	\begin{equation*}
		G^-\Bigl(a,\frac{\pi}{2}+\arctan(a)\Bigr)>0,\qquad G^-(a,\pi) = 2a - 1.
	\end{equation*}
	Thus, the conclusion follows from Proposition~\ref{prp:nodary:intersection} and Lemma~\ref{lem:nodoid:minus}.
\end{proof}

%% file: main.bbl
\begin{thebibliography}{1}

\bibitem{BenditoBowickMedina2014JGSP}
E.~Bendito, M.~J. Bowick, and A.~Medina.
\newblock A natural parameterization of the roulettes of the conics generating
  the {D}elaunay surfaces.
\newblock {\em J. Geom. Symmetry Phys.}, 33:27--45, 2014.

\bibitem{MR3708014}
R.~G. Bettiol, P.~Piccione, and B.~Santoro.
\newblock Deformations of free boundary {CMC} hypersurfaces.
\newblock {\em J. Geom. Anal.}, 27(4):3254--3284, 2017.

\bibitem{ByrdFriedman1971Springer}
P.~F. Byrd and M.~D. Friedman.
\newblock {\em Handbook of elliptic integrals for engineers and scientists},
  volume Band 67 of {\em Die Grundlehren der mathematischen Wissenschaften}.
\newblock Springer-Verlag, New York-Heidelberg, second edition, 1971.

\bibitem{MR4979235}
A.~Cerezo, I.~Fern\'andez, and P.~Mira.
\newblock Annular solutions to the partitioning problem in a ball.
\newblock {\em J. Reine Angew. Math.}, 828:57--81, 2025.

\bibitem{Delaunay1841JMPA}
C.-E. Delaunay.
\newblock Sur la surface de r{\'e}volution dont la courbure moyenne est
  constante.
\newblock {\em Journal de math{\'e}matiques pures et appliqu{\'e}es},
  6:309--315, 1841.

\bibitem{FraserSchoen16}
A.~Fraser and R.~Schoen.
\newblock Sharp eigenvalue bounds and minimal surfaces in the ball.
\newblock {\em Invent. Math.}, 203(3):823--890, 2016.

\bibitem{MladenovaMladenov24Mathematics}
C.~D. Mladenova and I.~M. Mladenov.
\newblock Parameterizations of delaunay surfaces from scratch.
\newblock {\em Mathematics}, 12(10):1570, 2024.

\bibitem{Scharrer22NonAna}
C.~Scharrer.
\newblock Embedded {D}elaunay tori and their {W}illmore energy.
\newblock {\em Nonlinear Anal.}, 223:Paper No. 113010, 23, 2022.

\bibitem{Steklov1902}
V.~Steklov.
\newblock Sur les problèmes fondamentaux de la physique mathématique (suite
  et fin).
\newblock {\em Annales scientifiques de l'École Normale Supérieure}, 3e
  série, 19:455--490, 1902.

\end{thebibliography}
